\title{{\bf{3-nets realizing a group in a projective plane}}}
\date{}
\author{G.~Korchm\'aros, G.~P.~Nagy and N.~Pace.}
\newtheorem{theorem}{Theorem}[section]
\newtheorem{proposition}[theorem]{Proposition}
\newtheorem{lemma}[theorem]{Lemma}
\theoremstyle{definition}
\newtheorem*{definition*}{Definition}
\newtheorem{example}[theorem]{Example}
\newtheorem{rem}[theorem]{Remark}
\newtheorem*{proposition*}{Proposition}
\newtheorem*{corollary*}{Corollary}
\newtheorem*{lemma*}{Lemma}
\def\cA{\mathcal A}
\def\cB{\mathcal B}
\def\cC{\mathcal C}
\def\cD{\mathcal D}
\def\cF{\mathcal F}
\def\cG{\mathcal G}
\def\cQ{\mathcal Q}
\def\cO{\mathcal O}
\def\cP{\mathcal P}
\def\cU{\mathcal U}
\def\cZ{\mathcal Z}
\begin{document}
\maketitle
\begin{abstract}
In a projective plane $PG(2,\mathbb K)$ defined over an algebraically closed field $\mathbb K$ of characteristic $0$, we give a complete classification of $3$-nets realizing a finite group. An infinite family, due to Yuzvinsky \cite{ys2004}, arises from plane cubics and comprises $3$-nets realizing cyclic and direct products of two cyclic groups. Another known infinite family, due to Pereira and Yuzvinsky \cite{per}, comprises $3$-nets realizing dihedral groups. We prove that there is no further infinite family. Urz\'ua's $3$-nets \cite{urzua2009} realizing the quaternion group of order $8$ are the unique sporadic examples.

If $p$ is larger than the order of the group, the above classification holds true in characteristic $p>0$ apart from three possible exceptions $\rm{Alt}_4$, $\rm{Sym}_4$ and $\rm{Alt}_5$.
\end{abstract}

\section{Introduction}
\label{problem}
In a projective plane a {\em{$3$-net}} consists of three pairwise disjoint classes of lines such that every point incident with two lines from distinct classes is incident with exactly one line from each of the three classes. If one of the classes has finite size, say $n$, then the other two classes also have size $n$, called the {\em{order}} of the $3$-net.

The notion of $3$-net comes from classical Differential geometry via the combinatorial abstraction of the notion of a $3$-web.
%A $3$-net of order $n$ is said to {\em{realizing a group}} $G$ of order $n$ when it is coordinatized by $G$.
There is a long history about finite $3$-nets in Combinatorics related to affine planes, latin squares, loops and strictly transitive permutation sets. In this paper we are dealt with $3$-nets in a projective plane $PG(2,\mathbb K)$ over an algebraically closed field $\mathbb K$ which are coordinatized by a group. Such a $3$-net, with line classes $\cA,\cB,\cC$ and coordinatizing group $G=(G,\cdot)$, is equivalently defined by a triple of bijective maps from $G$ to $(\cA,\cB,\cC)$, say $$\alpha:\,G\to \cA,\,\beta:\,G\to \cB,\,\gamma:\,G\to \cC$$ such that $a\cdot b=c$ if and only if $\alpha(a),\beta(b),\gamma(c)$ are three concurrent lines in $PG(2,\mathbb{K})$, for any $a,b,c \in G$. If this is the case, the $3$-net in $PG(2,\mathbb{K})$ is said to {\em{realize}} the group $G$.
In recent years, finite $3$-nets realizing a group in the complex plane have been investigated in connection with complex line arrangements and Resonance theory see \cite{fy2007,miq,per,ys2004,ys2007}.

In the present paper, combinatorial methods are used to investigate finite $3$-nets realizing a group. Since key examples, such as algebraic $3$-nets and tetrahedron type $3$-nets, arise naturally in the dual plane of $PG(2,\mathbb{K})$, it is convenient to work with the dual concept of a $3$-net.

Formally, a {\em{dual $3$-net}} of order $n$ in $PG(2,\mathbb{K})$ consists of a triple $(\Lambda_1,\Lambda_2,\Lambda_3)$ with $\Lambda_1,\Lambda_2,\Lambda_3$ pairwise disjoint point-sets of size $n$, called {\em{components}}, such that every line meeting two distinct components meets each component in precisely one point. A dual $3$-net $(\Lambda_1,\Lambda_2,\Lambda_3)$ realizing  a group is {\em{algebraic}} if its points lie on a plane cubic, and is of {\em{tetrahedron type}} if its components lie on the six sides (diagonals) of a non-degenerate quadrangle such a way that $\Lambda_i=\Delta_i\cup \Gamma_i$ with $\Delta_i$ and $\Gamma_i$ lying on opposite sides, for $i=1,2,3$.

The goal of this paper is to prove the following classification theorem.
\begin{theorem}
\label{mainteo} In the projective plane $PG(2,\mathbb{K})$ defined over an algebraically closed field $\mathbb{K}$ of characteristic $p\geq 0$, let $(\Lambda_1,\Lambda_2,\Lambda_3)$ be a dual $3$-net of order $n\geq 4$ which realizes a group $G$. If either $p=0$ or $p>n$ then one of the following holds.
\begin{itemize}
\item[\rm{(I)}] $G$ is either cyclic or the direct product of two cyclic groups, and $(\Lambda_1, \Lambda_2, \Lambda_3)$ is algebraic.
\item[\rm{(II)}] $G$ is dihedral and $(\Lambda_1,\Lambda_2,\Lambda_3)$ is of tetrahedron type.
\item[\rm{(III)}] $G$ is the quaternion group of order $8$.
\item[\rm{(IV)}] $G$ has order $12$ and is isomorphic to $\rm{Alt}_4$.
\item[\rm{(V)}] $G$ has order $24$ and is isomorphic to $\rm{Sym}_4$.
\item[\rm{(VI)}] $G$ has order $60$ and is isomorphic to $\rm{Alt}_5$.
\end{itemize}
\end{theorem}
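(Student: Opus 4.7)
The plan is to argue by induction on $|G|$, using the fact that every subgroup $H\le G$ yields a dual $3$-subnet realizing $H$. In a minimal counterexample every proper subgroup of $G$ must already be of type (I)--(VI), and the task is to promote this information to a statement about $G$ itself inside $PG(2,\mathbb{K})$.

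The first ingredient I would assemble is a rigidity result for abelian sub-$3$-nets, namely that any such subnet lies on a plane cubic (essentially Yuzvinsky's theorem \cite{ys2004}). Under the hypothesis $p=0$ or $p>n$, the $n$-torsion of a smooth plane cubic is $\mathbb{Z}_n\times\mathbb{Z}_n$, which explains case (I) and also forces every abelian subgroup of $G$ to have $p$-rank at most $2$ for every prime $p$; hence each Sylow subgroup of $G$ has rank at most $2$. The second ingredient is the Pereira--Yuzvinsky rigidity \cite{per}: every dihedral sub-$3$-net of order $\ge 6$ is of tetrahedron type, so the three involutions of any such dihedral subgroup lie at prescribed positions on the six diagonals of a fixed nondegenerate quadrangle. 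This pins down the geometry of any pair of involutions of $G$.

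I would then combine the two rigidities by studying how an algebraic subnet sitting on a cubic and a tetrahedron-type subnet glue along a common subgroup. Each configuration has a small, explicit stabilizer in $PGL(3,\mathbb{K})$, so the compatibility of an abelian sub-$3$-net with a dihedral one forces strong incidence conditions between the cubic and the diagonals of the tetrahedron. Applied to every pair (maximal cyclic subgroup, involution) of $G$, this should reduce the admissible group-theoretic configurations to those in which the involutions lie in a very constrained way inside the lattice of abelian subgroups, and in particular rule out $G$ having three pairwise commuting involutions or a cyclic subgroup of index $2$ with an ``ill-placed'' outer involution.

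The main obstacle I expect is the final, essentially group-theoretic step: using the Sylow rank $\le 2$ restriction together with the geometric constraints on involutions to conclude that $G$ is cyclic, $\mathbb{Z}_m\times\mathbb{Z}_n$, dihedral, $Q_8$, $\mathrm{Alt}_4$, $\mathrm{Sym}_4$, or $\mathrm{Alt}_5$. The candidates hardest to eliminate are the remaining rank-$2$ groups at the prime $2$ (generalized quaternion and semidihedral groups of order $\ge 16$) and small almost-simple groups such as $\mathrm{PSL}(2,q)$, none of which can be discarded from Sylow data alone; these will require a direct geometric argument showing that the overlap of several dihedral or abelian subnets would produce an incidence pattern forbidden by B\'ezout's theorem for the cubic. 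The hypothesis $p=0$ or $p>n$ is used throughout to guarantee the expected $\mathbb{Z}_n\times\mathbb{Z}_n$ torsion on the cubic and the nondegeneracy of the tetrahedron construction.
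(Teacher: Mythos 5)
Your sketch correctly identifies the two rigidity pillars that the paper also rests on: abelian sub-$3$-nets are algebraic (lying on a cubic) and dihedral sub-$3$-nets are of tetrahedron type. But the proposal has a genuine gap, and it is exactly where you yourself flag ``the main obstacle'': the passage from these two local rigidities to the global classification of $G$ is not an afterthought but is essentially the entire content of the proof, and your plan does not contain the ideas needed to carry it out. Concretely, the paper's engine is not a gluing of one cubic with one tetrahedron, but a gluing of the \emph{many} cubics $\cF_j$ that contain the sub-$3$-nets $(\Gamma_1^j,\Gamma_2,\Gamma_3^j)$ realizing a fixed \emph{normal} abelian subgroup $H$ through a common coset $\Gamma_2$: normality gives $k^2$ such subnets sharing members, a parametrization of each $\Gamma_2$ by a coset of a subgroup of the cubic's group law, and then B\'ezout forces $\cF=\cF_j$ except in a short list of exceptional configurations at $n=5,6,9$ governed by inflection points and involutory homologies (Proposition \ref{superlemmaszeged}). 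Your induction ``over all subgroups'' misses that normality is indispensable here; for a non-normal $H$ one gets only $k$ subnets and no control. You would also need the base cases settled by computer search (orders $\le 9$, $Q_8$, $\mathrm{Alt}_4$), which your argument silently assumes.

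The second missing ingredient is the group-theoretic endgame you defer. The paper does not eliminate generalized quaternion, semidihedral and $PSL(2,q)$ candidates by ad hoc incidence arguments; it runs an induction on $2$-groups through index-$2$ subgroups (Proposition \ref{cl2group}), proves that a realizable group with a normal dihedral subgroup is itself dihedral (Proposition \ref{cortetra}), and then invokes the Gorenstein--Walter classification of finite groups with dihedral Sylow $2$-subgroups, together with Zassenhaus' and Schur's theorems on complements and central extensions, to reduce the non-solvable case to $\mathrm{Alt}_5$ and to kill $SL(2,3)$, $SL(2,q)$ and the double cover of $\mathrm{Alt}_7$ (Propositions \ref{lemcen} and \ref{egysz}). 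None of this follows from ``Sylow rank $\le 2$ plus constrained involutions''; without importing a classification theorem of Gorenstein--Walter type, the list of surviving groups cannot be closed off. So the proposal is a reasonable roadmap of inputs but does not yet constitute a proof: the reduction step and the final classification step both require arguments that are absent from it.
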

A computer aided exhaustive search shows that if $p=0$ then (IV) (and hence (V), (VI)) does not occur.

Theorem \ref{mainteo} shows that every realizable finite group can act in $PG(2,\mathbb{K})$ as a projectivity group. This confirms Yuzvinsky's conjecture for $p=0$.

The proof of Theorem \ref{mainteo} uses some previous results due to
Yuzvinsky \cite{ys2007}, Urz\'ua \cite{urzua2009}, and  Blokhuis, Korchm\'aros and Mazzocca \cite{bkm}.

Our notation and terminology are standard, see \cite{hp1973}. In view of Theorem \ref{mainteo}, $\mathbb{K}$ denotes an algebraically closed field of characteristic $p$ where either $p=0$ or $p\geq 5$, and any dual $3$-net in the present paper is supposed to be have order $n$ with $n<p$ whenever $p>0$.

\section{Some useful results on plane cubics}
\label{excubic}
A nice infinite family of dual $3$-nets realizing a cyclic group arises from plane cubics in $PG(2,\mathbb K)$;
%over an algebraically field $\mathbb K$ of characteristic $p\neq 2$,
see \cite{ys2004}. The key idea is to use
the well known abelian group defined on the points of an irreducible plane cubic, recalled here in the following two propositions.
\begin{proposition}{\rm{\cite[Theorem 6.104]{hkt}}}
\label{groupcubic} A non-singular plane cubic ${\cF}$ can be equipped with an additive group $(\cF,+)$ on the set of all its points. If an inflection point $P_0$ of ${\cF}$ is chosen to be the identity $0$, then three distinct points $P,Q,R\in \cF$ are collinear if and only if $P+Q+R=0$. For a prime number $d\neq p$, the subgroup of $(\cF,+)$ consisting of all elements $g$ with $dg=0$ is isomorphic to $C_d\times C_d$ while for $d=p$ it is either trivial or isomorphic to $C_p$ according as $\cF$ is supersingular or not.
\end{proposition}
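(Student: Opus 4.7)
The plan is to construct the group law via the classical chord-tangent procedure, verify the axioms, and then separately compute the $d$-torsion. Given points $P,Q \in \cF$, let $P*Q$ denote the third intersection of the line $PQ$ with $\cF$---or the residual intersection of the tangent at $P$ if $P=Q$---which is well-defined by B\'ezout's theorem since $\cF$ is non-singular. Set $P+Q := P_0 * (P*Q)$. Because $P_0$ is an inflection, its tangent meets $\cF$ only at $P_0$ with multiplicity three, so $P_0 * P_0 = P_0$; this gives both the identity law $P+P_0 = P$ and the inverse $-P = P*P_0$. Commutativity is built in by symmetry, and the equivalence ``$P,Q,R$ collinear $\iff P+Q+R = 0$'' reduces to the very definition of $+$.

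The main obstacle I anticipate is associativity, $(P+Q)+R = P+(Q+R)$, which is invisible from the construction. I would prove it by the Cayley-Bacharach theorem: if two cubics meet in nine points, any third cubic through eight of them passes through the ninth. Applied to the two degenerate cubics consisting of the triples of lines produced by the two sides of the associativity equation, together with the cubic $\cF$, one forces the two candidates for the sum to coincide. A cleaner, more structural alternative is to identify $(\cF,+)$ with $\mathrm{Pic}^0(\cF)$ via $P \mapsto [P]-[P_0]$; collinearity then corresponds to the linear equivalence $[P]+[Q]+[R] \sim 3[P_0]$, and the group axioms become immediate consequences of Riemann-Roch. Either route uses the non-singularity of $\cF$ in an essential way.

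For the $d$-torsion statement, I would analyze the multiplication-by-$d$ isogeny $\mu_d : \cF \to \cF$. Standard arguments show that $\mu_d$ is a surjective endomorphism of degree $d^2$, separable precisely when $d$ is coprime to $p$. When $d$ is a prime distinct from $p$, separability yields $|\ker \mu_d| = d^2$, and since $\ker \mu_d$ is a finite abelian group annihilated by $d$, it must be $C_d \times C_d$. When $d = p$, the Frobenius-Verschiebung factorization $\mu_p = V \circ F$ forces $\ker \mu_p$ to have order $1$ or $p$, producing the supersingular/ordinary dichotomy. This torsion part is technically lighter than the associativity proof, which I expect to be the genuine bottleneck of the argument.
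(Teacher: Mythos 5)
The paper offers no proof of this proposition: it is quoted directly from \cite[Theorem 6.104]{hkt}, so there is nothing internal to compare against. Your sketch --- the chord--tangent operation $P+Q=P_0*(P*Q)$, associativity via Cayley--Bacharach or the identification with $\mathrm{Pic}^0(\cF)$, and the torsion read off from the degree $d^2$ and (in)separability of multiplication by $d$, with the Frobenius--Verschiebung factorization handling $d=p$ --- is the standard argument given in such references and is sound; the only imprecision is that the identity law $P+P_0=P$ holds for any base point, the inflection hypothesis being what is actually needed for $-P=P*P_0$ and for the three-point collinearity criterion $P+Q+R=0$.
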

%A similar group can be defined when the cubic is irreducible with a singular point.
\begin{figure}[ht]
\begin{center}
\begin{picture}(1,1)
%\put(30,130){$P_{0}$}
\put(28,131){$P_{0}$}
\put(95,117){$P\oplus Q$}
\put(182,97){$R$}
\put(88,72){$Q$}
\put(7,48){$P$}
\put(44,124.5){\circle*{3}}
\put(94,112.5){\circle*{3}}
\put(176,93){\circle*{3}}
\put(99,70){\circle*{3}}
\put(15.5,45.5){\circle*{3}}
\end{picture}
\scalebox{.5}{\includegraphics{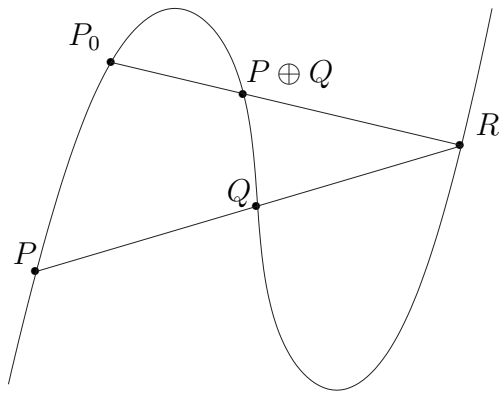}}
\end{center}
\caption{Abelian group law on an elliptic curve}
\label{fig112}
\end{figure}
\begin{proposition}{\rm{\cite[Proposition 5.6, (1)]{ys2004}}}.
\label{groupcubic1}
Let $\cF$ be an irreducible singular plane cubic  with its unique singular point $U$, and
%and add $S$ to $G$ when$S$ is a cusp.
define the operation $+$ on $\cF\setminus \{U\}$ in exactly the same way as on a non-singular plane cubic. Then $(\cF,+)$ is an abelian group isomorphic to the additive group of \,$\mathbb K$, or the multiplicative group of \,$\mathbb K$, according as $P$ is a cusp or a node.
\end{proposition}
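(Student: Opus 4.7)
My plan is to treat the cuspidal and nodal cases separately, in each case reducing $\cF$ to a projective normal form and writing down an explicit rational parametrization of $\cF\setminus\{U\}$ under which the chord-and-tangent law becomes manifestly addition on $\mathbb K$ or multiplication on $\mathbb K^*$.

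For the cuspidal case, a standard projective change of coordinates puts $\cF$ in the form $Y^2Z=X^3$, so that $U=(0:0:1)$ is the cusp and $P_0=(0:1:0)$ is the unique inflection, with inflectional tangent $Z=0$. The map $\phi:\mathbb K\to\cF\setminus\{U\}$, $\phi(t)=(t:1:t^3)$, is a bijection with $\phi(0)=P_0$. A line $aX+bY+cZ=0$ meets $\cF$ at $\phi(t_1),\phi(t_2),\phi(t_3)$ precisely when $t_1,t_2,t_3$ are the roots of $ct^3+at+b$; the vanishing of the $t^2$ coefficient forces $t_1+t_2+t_3=0$ by Vieta, and the same equation, together with standard root-multiplicity conventions, handles the tangential and inflectional degenerations. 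Hence $\phi$ is a group isomorphism onto $(\mathbb K,+)$ once $P_0$ is taken as the identity.

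For the nodal case, a projective change of coordinates puts $\cF$ in the form $Y^2Z=X^2(X+Z)$, with node $U=(0:0:1)$ whose two branches have distinct tangents $Y=\pm X$ in the affine chart $Z=1$ (this uses $p\neq 2$); the point $P_0=(0:1:0)$ is an inflection with inflectional tangent $Z=0$. The pencil of lines through $U$ in the affine chart meets $\cF$ in a unique further point, yielding a birational parametrization $t\mapsto(t^2-1:t(t^2-1):1)$ of $\cF\setminus\{U\}$ by $\mathbb P^1_t\setminus\{\pm 1\}$, with $t=\infty$ mapped to $P_0$. The Moebius substitution $s=(t-1)/(t+1)$ identifies $\mathbb P^1_t\setminus\{\pm 1\}$ with $\mathbb K^*=\mathbb P^1_s\setminus\{0,\infty\}$ and sends $P_0$ to $s=1$. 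Substituting the parametrization into an arbitrary line equation produces a cubic in $t$ whose elementary symmetric function $\sigma_2$ equals $-1$; expanding $\prod(t_i-1)$ and $\prod(t_i+1)$ in terms of $\sigma_1,\sigma_2,\sigma_3$ and using $\sigma_2=-1$ collapses the ratio to $1$, so collinearity is equivalent to $s_1s_2s_3=1$. This realizes the parametrization as a group isomorphism onto $(\mathbb K^*,\cdot)$.

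The main obstacle is the collinearity calculation in the nodal case: unlike the cuspidal case, where the vanishing of the $t^2$ coefficient of the intersection cubic is automatic and instantly gives $\sigma_1=0$, here the relevant invariant is the hybrid condition $\sigma_2=-1$, and the Moebius normalization above is needed to convert this into the multiplicative identity. A more conceptual alternative would be to recognize that the chord-and-tangent law makes $\cF\setminus\{U\}$ a connected one-dimensional commutative algebraic group over the algebraically closed field $\mathbb K$; the only such groups are $\mathbb G_a$ and $\mathbb G_m$, and the underlying variety (affine line versus punctured affine line) already distinguishes cusp from node. I would adopt the explicit parametric route in order to keep the argument self-contained and in line with the combinatorial spirit of the paper.
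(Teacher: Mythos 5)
Your proposal is correct, but note that the paper itself offers no proof of this statement: it is quoted verbatim from Yuzvinsky \cite[Proposition 5.6, (1)]{ys2004} and used as a black box, so there is no internal argument to compare against. Your explicit-parametrization route is the standard one and your computations check out: for $Y^2Z=X^3$ the intersection polynomial $ct^3+at+b$ has vanishing $t^2$-coefficient, giving $\sigma_1=0$; for $Y^2Z=X^2(X+Z)$ the polynomial $bt^3+at^2-bt+(c-a)$ has $\sigma_2=-1$, and indeed $\prod(t_i-1)=\sigma_3-\sigma_2+\sigma_1-1$ and $\prod(t_i+1)=\sigma_3+\sigma_2+\sigma_1+1$ both collapse to $\sigma_1+\sigma_3$, so $s_1s_2s_3=1$ under $s=(t-1)/(t+1)$. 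Two small points deserve a sentence each in a polished write-up: first, you should observe that a line through $U$ meets $\cF$ at $U$ with multiplicity at least $2$ and hence meets $\cF\setminus\{U\}$ in at most one point, which is why deleting $U$ yields a well-defined chord--tangent law (and why you may always assume $c\neq 0$); second, in the nodal case the lines through $P_0$ have $b=0$, so the intersection polynomial degenerates to a quadratic and the identity $s_1s_2s_3=1$ must be checked with one root placed at $t=\infty$ (it does hold, since $t_1+t_2=0$ forces $s_1s_2=1$). The hypotheses $p=0$ or $p\geq 5$ in force throughout the paper cover the characteristic restrictions your normal forms require. Your alternative "one-dimensional connected commutative algebraic group" argument is also valid and is closer in spirit to how the result is usually justified in the algebraic-geometry literature, but the parametric route is the more self-contained choice here.
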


If $P$ is a non-singular and non-inflection point of $\cF$ then the tangent to $\cF$ at $P$ meets $\cF$ a point $P'$ other than $P$, and $P'$ is the {\em{tangential}} point of $P$.
Every inflection point of a non-singular cubic $\cF$ is the center of an involutory homology preserving $\cF$.
A classical {\em{Lame configuration}} consists of two triples of distinct lines in
$PG(2,\mathbb{K})$, say $\ell_1,
\ell_2,\ell_3$ and $r_1,r_2,r_3$, such that no line from one triple passes through the
common point of two lines from the other triple. For $1\leq j,k\leq 3$,
let $R_{jk}$ denote the common point of the lines $\ell_j$ and $r_k$.
There are nine such common points, and they are called the points of the Lame configuration.
\begin{proposition}{\rm{Lame's Theorem.}}\,If eight points from a Lame configuration
lie on a plane cubic then the ninth also does.
\end{proposition}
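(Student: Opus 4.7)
The plan is to prove Lame's theorem by the classical pencil-of-cubics argument. Setting $F_1=\ell_1\ell_2\ell_3$ and $F_2=r_1r_2r_3$ as homogeneous cubic forms, both vanish at every $R_{jk}$, so the linear pencil $\{\lambda F_1+\mu F_2\}$ is a two-parameter family of cubics passing through all nine Lame points. Given a cubic $G$ through eight of the nine $R_{jk}$, it is enough to prove that $G$ lies in this pencil, since every member of the pencil automatically passes through the ninth point.

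First I would dispose of the case where $G$ contains one of the six configuration lines. If $\ell_1\subset G$, write $G=\ell_1\cdot Q$ with $Q$ a conic that must pass through the six $R_{jk}$ with $j\neq 1$; three of these lie on $\ell_2$ and three on $\ell_3$, so Bezout (a conic meeting a line in three distinct points contains it) forces $Q$ to split as $\ell_2\ell_3$, giving $G=F_1$. An entirely symmetric argument handles each of the other five configuration lines, and in every such subcase $G\in\{F_1,F_2\}$.

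In the remaining case $G$ contains no line of the configuration, so by Bezout it meets each configuration line in exactly three points. The cleanest conclusion is to show that the eight chosen Lame points impose eight independent linear conditions on cubics: if so, the space of cubics vanishing on them has dimension exactly two, hence equals the pencil $\langle F_1,F_2\rangle$, and $G$ lies in it. For each of the eight points $P$ I would construct a cubic through the other seven that avoids $P$ by a line-plus-conic argument: take a configuration line through three of the seven not through $P$, then a generic conic in the pencil of conics through the remaining four. Those four points form a $2+2$ collinear pattern on two of the configuration lines, so the pencil of conics through them contains the degenerate conic made of those two lines (which passes through $P$); but a generic member of the pencil misses $P$, since the unique conic through the four plus $P$ must, by the same collinearity argument, be that very degenerate union. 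Multiplying by the chosen configuration line yields the desired cubic. The main obstacle is the uniform bookkeeping of this line-plus-conic construction across all eight choices of $P$ and verifying that the Lame configuration axiom (no line of one triple through the common point of two lines of the other) rules out the pathological coincidences. Once the dimension count is in place, $G\in\langle F_1,F_2\rangle$ and the proof works in any characteristic.
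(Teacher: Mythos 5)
The paper states Lame's Theorem as a classical fact and supplies no proof of its own, so your argument can only be measured against the standard literature: what you give is the classical pencil (Cayley--Bacharach) proof, and it is essentially correct. Two remarks. First, in the case where $G$ contains a configuration line, say $G=\ell_1 Q$, the conic $Q$ is only forced through \emph{at least five} of the six points $R_{jk}$ with $j\neq 1$, because the excluded ninth point may itself be one of them; the conclusion survives, since $Q$ still contains whichever of $\ell_2,\ell_3$ carries three of these points, and the residual line is then pinned down by the two remaining (distinct) points on the other line. In fact this whole case is redundant: once the eight points are shown to impose independent conditions on cubics, the cubics through them form exactly the pencil $\langle F_1,F_2\rangle$, and that conclusion applies to \emph{every} cubic through the eight points, reducible or not. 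Second, the bookkeeping you flag does close up: for each $P=R_{jk}$ one can choose a configuration line (from the $\ell$-triple or from the $r$-triple, according to whether the omitted point shares an index with $P$) that contains three of the other seven points and avoids $P$ by the Lame axiom; the residual four points then split $2+2$ on two lines of a single triple, one of which passes through $P$, so the five points including $P$ contain three collinear ones and the unique conic through all five is the degenerate pair of lines, exactly as you claim. The Lame axiom also guarantees that the nine points are pairwise distinct, which is all the non-degeneracy the construction requires, and nothing in the argument depends on the characteristic.
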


\section{3-nets, quasigroups and loops}
\label{prelim}
A latin square of order $n$ is a table with $n$ rows and $n$ columns which has $n^2$ entries with $n$ different elements none of them occurring twice within any row or column. If $(L,*)$ is a quasigroup of order $n$ then its multiplicative table, also called Cayley table, is a latin square of order $n$, and the converse also holds.

For two integers $k,n$ both bigger than $1$, let $(G,\cdot)$ be a group of order $kn$ containing a normal subgroup $(H,\cdot)$ of order $n$. Let $\cG$ be a Cayley table of $(G,\cdot)$. Obviously, the rows and the columns representing the elements of $(H,\cdot)$ in $\cG$ form a latin square which is a Cayley table for  $(H,\cdot)$. From $\cG$, we may extract $k^2-1$  more latin squares using the cosets of $H$ in $G$. In fact, for any two such cosets $H_1$ and $H_2$, a latin square $H_{1,2}$ is obtained by taking as rows (respectively columns) the elements of $H_1$ (respectively $H_2$).

\begin{proposition}
\label{group} The latin square $H_{1,2}$ is a Cayley table for a quasigroup isotopic to the group $H$.
\end{proposition}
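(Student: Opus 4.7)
The plan is to produce an explicit isotopism from the quasigroup encoded by the latin square $H_{1,2}$ to the group $(H,\cdot)$. First I would pin down the coset in which the entries of $H_{1,2}$ lie. Since $H$ is normal in $G$, if I pick coset representatives $g_1,g_2\in G$ with $H_1=g_1H$ and $H_2=g_2H$, then the set product $H_1\cdot H_2$ is the single coset $H_3=g_1g_2H$. This confirms that $H_{1,2}$ really is an $n\times n$ latin square whose rows, columns, and symbols are parametrized by three $n$-element sets $H_1$, $H_2$, $H_3$.

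Next I would transport $H_{1,2}$ onto the underlying set of $H$ via the bijections $\phi_i\colon H\to H_i$ given by $\phi_i(h)=g_ih$, with $g_3:=g_1g_2$. Writing a typical row label as $a=g_1h_1$ and a column label as $b=g_2h_2$, a direct computation using normality of $H$ yields
\[
a\cdot b \;=\; g_1h_1\,g_2h_2 \;=\; g_1g_2\,\bigl(g_2^{-1}h_1g_2\bigr)\,h_2 \;=\; g_3\cdot\tau(h_1)\cdot h_2,
\]
where $\tau(h):=g_2^{-1}hg_2$ is an automorphism of $H$ (again by normality). Therefore the quasigroup operation $*$ read off from $H_{1,2}$ after re-labelling rows, columns and symbols via $\phi_1^{-1},\phi_2^{-1},\phi_3^{-1}$ is
\[
h_1 * h_2 \;=\; \tau(h_1)\cdot h_2.
\]

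To finish, I would simply exhibit the isotopism. The triple $(\tau,\mathrm{id}_H,\mathrm{id}_H)$ consists of three bijections $H\to H$ and satisfies $\tau(h_1)\cdot\mathrm{id}_H(h_2)=\mathrm{id}_H(h_1*h_2)$ by the formula above, so $(H,*)$ is isotopic to $(H,\cdot)$ by definition. Combining this with the observation that $H_{1,2}$, regarded as a Cayley table, presents precisely the quasigroup $(H,*)$, gives the proposition.

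The argument contains no real obstacle; its content is essentially bookkeeping. The one subtlety worth emphasizing is that three independent bijections are allowed in an isotopism, which is why what emerges is merely isotopic — and in general not isomorphic — to $(H,\cdot)$: the twist $\tau$ is an inner automorphism of $G$ restricted to $H$, but it need not be inner on $H$ itself.
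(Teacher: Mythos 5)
Your proof is correct and follows essentially the same route as the paper: relabel rows, columns and entries of $H_{1,2}$ by elements of $H$ via coset representatives and read off the isotopism. The only difference is that the paper writes rows as $t_1\cdot h_1'$ but columns as $h_2'\cdot t_2$, so the relabelled square becomes literally the Cayley table of $(H,\cdot)$ with no twist, whereas your uniform left-coset parametrization produces the automorphism $\tau$, which you then correctly absorb into the isotopism.
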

\begin{proof} Fix an element $t_1\in H_1$. In $H_{1,2}$, label the row representing the
element $h_1\in H_1$ with $h_1'\in H$ where $h_1=t_1\cdot h_1'$. Similarly, for a fixed
element $t_2\in H_2$, label the column representing the element $h_2\in H_2$ with $h_2'\in
H$ where $h_2=h_2'\cdot t_2$. The entries in $H_{1,2}$ come from the coset $H_1\cdot H_2$.
Now, label the entry $h_3$ in $H_1\cdot H_2$ with the element $h_3'\in H$ where
$h_3=t_1 \cdot h_3'\cdot t_2$.
Doing so, $H_{1,2}$ becomes a Cayley table for the subgroup $(H,\cdot)$, whence the assertion follows.
\end{proof}
In terms of a dual $3$-net, the relationship between $3$-nets and quasigroups can be described as follows. Let $(L,\cdot)$ be a loop arising from an embeddable $3$-net, and consider its dual $3$-net with its components $\Lambda_1,\Lambda_2,\Lambda_3$. For $i=1,2,3$, the points in $\Lambda_i$ are bijectively labeled by the elements of $L$. Let $(A_1,A_2,A_3)$ with $A_i\in \Lambda_i$ denote the the triple of the points corresponding to the element  $a\in L$. With this notation, $a\cdot b=c$ holds in $L$ if and only if the points $A_1,B_2$ and $C_3$ are collinear. In this way, points in $\Lambda_3$ are {\em{naturally labeled}} when $a \cdot b$ is the label of $C_3$.
 Let $(E_1,E_2,E_3)$ be the triple for the unit element $e$ of $L$. {}From $e\cdot e=e$, the points $E_1,E_2$ and $E_3$ are collinear. Since $a\cdot a=a$ only holds for $a=e$, the points $A_1,A_2,A_3$ are the vertices of a (non-degenerate) triangle whenever $a\neq e$.  Furthermore, from $e\cdot a=a$, the points $E_1,A_2$ and $A_3$ are collinear; similarly, $a\cdot e=a$ yields that the points $A_1,E_2,$ and $A_3$ are collinear. However, the points $A_1,A_2$ and $E_3$ form a triangle in general; they are collinear if and only if
$a\cdot a=e$, i.e. $a$ is an involution of $L$.

In some cases, it is useful to relabel the points of $\Lambda_3$ replacing the above bijection $A_3\to a$ from $\Lambda_3$ to $L$ by the bijection $A_3\to a'$ where $a'$ is the inverse of $a$ in $(L,\cdot)$. Doing so, three points  $A_1,B_2,C_3$ with $A_1\in \Lambda_1,\,B_2\in \Lambda_2,\,C_3  \in\Lambda_3$  are collinear if and only if $a\cdot b\cdot c=e$ with $e$ being the unit element in $(L,\cdot)$.
This new bijective labeling will be called a {\em{collinear relabeling}} with respect to $\Lambda_3$.

In this paper we are interested in $3$-nets of $PG(2,\mathbb{K})$ which are coordinatized by a group $G$. If this is the case, we say that the $3$-net realizes the group $G$. In terms of dual $3$-nets where $\Lambda_1,\Lambda_2,\Lambda_3$ are the three components, the meaning of this condition is as follows: There exists a triple of bijective maps from $G$ to $(\Lambda_1,\Lambda_2,\Lambda_3)$, say $$\alpha:\,G\to \Lambda_1,\,\beta:\,G\to \Lambda_2,\,\gamma:\,G\to \Lambda_3$$ such that $a\cdot b=c$ if and only if $\alpha(a),\beta(b),\gamma(c)$ are three collinear points, for any $a,b,c \in G$.

Let $(\Lambda_1,\Lambda_2,\Lambda_3)$ be a dual $3$-net that realizes a group $(G,\cdot)$ of order $kn$ containing a subgroup $(H,\cdot)$ of order $n$. Then the left cosets of $H$ provide a partition of each component $\Lambda_i$ into $k$ subsets. Such subsets are called left $H$-members and denoted by $\Gamma_i^{(1)},\ldots,\Gamma_i^{(k)}$, or simply $\Gamma_i$ when this does not cause confusion. The left translation map $\sigma_g:\, x\mapsto x+g$ preserves every left $H$-member.
%The right cosets of $H$ behave similarly.
The following lemma shows that every left $H$-member $\Gamma_1$ determines a dual $3$-subnet of $(\Lambda_1,\Lambda_2,\Lambda_3)$ that realizes $H$.
\begin{lemma}
\label{befana} Let $(\Lambda_1,\Lambda_2,\Lambda_3)$ be a dual $3$-net that realizes a group $(G,\cdot)$ of order $kn$ containing a subgroup $(H,\cdot)$ of order $n$. For any left coset $g\cdot H$ of $H$ in $G$, let $\Gamma_1=g\cdot H,\, \Gamma_2=H$ and $\Gamma_3=g\cdot H$. Then $(\Gamma_1,\Gamma_2,\Gamma_3)$ is a $3$-subnet of $(\Lambda_1,\Lambda_2,\Lambda_3)$ which realizes $H$.
\end{lemma}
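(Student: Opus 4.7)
The plan is to unpack the definitions. Via the given bijections $\alpha,\beta,\gamma$ I would identify the points of each $\Lambda_i$ with the elements of $G$; then $\Gamma_1,\Gamma_2,\Gamma_3$ are the subsets $gH,\,H,\,gH\subseteq G$. Each has cardinality $n$, and they are pairwise disjoint since the ambient $\Lambda_i$ are. So there are essentially two things to check: (i) the incidence axiom of a dual $3$-net for $(\Gamma_1,\Gamma_2,\Gamma_3)$, and (ii) the existence of a triple of bijections from $H$ onto the $\Gamma_i$ that realizes the multiplication of $H$.

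For (i), I would run through the three possible choices of two components and compute the third intersection using the defining rule ``$a\cdot b=c$ iff $\alpha(a),\beta(b),\gamma(c)$ are collinear''. Writing an arbitrary element of $gH$ as $gh'$ with $h'\in H$, and using closure of $H$ under products and inverses, each computation lands in the desired $\Gamma_k$: from $\alpha(gh_1)\in\Gamma_1$ and $\beta(b)\in\Gamma_2$ one gets $(gh_1)\cdot b=g(h_1 b)\in gH=\Gamma_3$; from $\alpha(gh_1)\in\Gamma_1$ and $\gamma(gh_3)\in\Gamma_3$ the second coordinate is $(gh_1)^{-1}(gh_3)=h_1^{-1}h_3\in H=\Gamma_2$; and the case $(\Gamma_2,\Gamma_3)$ is symmetric. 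Note that normality of $H$ is never invoked.

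For (ii), I would define the bijections by ``undoing'' the left coset shift: $\alpha'(h):=\alpha(gh)$, $\beta'(h):=\beta(h)$, $\gamma'(h):=\gamma(gh)$. Then $\alpha'(a),\beta'(b),\gamma'(c)$ are collinear iff $(ga)\cdot b=gc$ in $G$, and cancelling $g$ on the left yields exactly $a\cdot b=c$ in $H$. Hence $(\alpha',\beta',\gamma')$ realizes $H$ on $(\Gamma_1,\Gamma_2,\Gamma_3)$.

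I do not foresee a substantive obstacle: this is essentially the coset calculation already carried out in the proof of Proposition~\ref{group}, specialized to the triple $(gH,H,gH)$, which is precisely the choice making the induced Cayley table equal to $H$'s rather than merely isotopic to it. The only subtlety worth flagging is the left-translation relabeling of $\Gamma_1$ and $\Gamma_3$ by $g$ in step (ii); without it one would recover only isotopy, as in Proposition~\ref{group}, rather than a genuine realization of $H$.
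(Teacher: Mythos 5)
Your proposal is correct and follows essentially the same route as the paper: the core step is the computation $(g\cdot h_1)\cdot h_2=g\cdot(h_1\cdot h_2)\in g\cdot H$, which is exactly the paper's (one-line) argument. You are merely more explicit in checking the remaining incidence cases and in writing down the relabeling bijections $\alpha'(h)=\alpha(g\cdot h)$, $\beta'(h)=\beta(h)$, $\gamma'(h)=\gamma(g\cdot h)$ that exhibit the realization of $H$ itself rather than an isotope, which the paper leaves implicit.
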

\begin{proof} For any $h_1,h_2\in H$ we have that $(g\cdot h_1)\cdot h_2=g\cdot(h_1\cdot h_2)=g\cdot h$ with $h\in H$. Hence, any line joining a point of $\Gamma_1$ with a point of $\Gamma_2$ meets $\Gamma_3$.
\end{proof}
Similar results hold for right cosets of $H$. Therefore, for any right coset $H\cdot g$, the triple $(\Gamma_1,\Gamma_2,\Gamma_3)$ with $\Gamma_1=H,\,\Gamma_2=H\cdot g$ and $\Gamma_3=H\cdot g$ is a $3$-subnet of $(\Lambda_1,\Lambda_2,\Lambda_3)$ which realizes $H$.

The dual $3$-subnets $(\Gamma_1,\Gamma_2,\Gamma_3)$ introduced in Lemma \ref{befana} play a relevant role. When $g$ ranges over $G$, we obtain as many as $k$ such dual $3$-nets, each being called {\em{a dual $3$-net realizing the subgroup $H$ as a subgroup of $G$}}.

Obviously, left cosets and right cosets coincide if and only if $H$ is a normal subgroup of $G$, and if this is the case we may use the shorter term of coset.

Now assume that $H$ is a normal subgroup of $G$.
Take two $H$-members from different components, say $\Gamma_i$ and $\Gamma_j$  with $1\le i < j \le 3$. {}From Proposition \ref{group}, there exists  a member $\Gamma_m$ from the remaining component $\Lambda_m$, with $1\leq m \leq 3$ and $m\neq i,j$, such that $(\Gamma_1,\Gamma_2,\Gamma_3)$ is a dual 3-net of realizing $(H,\cdot)$. Doing so, we obtain $k^2$ dual $3$-subnets of $(\Lambda_1,\Lambda_2,\Lambda_3)$.  They are all the dual $3$-subnets of $(\Lambda_1,\Lambda_2,\Lambda_3)$ which realize the normal subgroup $(H,\cdot)$ as a subgroup of $(G,\cdot)$.
\begin{lemma}
\label{befanabis} Let $(\Lambda_1,\Lambda_2,\Lambda_3)$ be a dual $3$-net that realizes a group $(G,\cdot)$ of order $kn$ containing a normal subgroup $(H,\cdot)$ of order $n$. For any two cosets $g_1\cdot H$ and $g_2\cdot H$ of $H$ in $G$, let $\Gamma_1=g_1\cdot H,\, \Gamma_2=g_2\cdot H$ and $\Gamma_3=(g_1\cdot g_2)\cdot H$. Then $(\Gamma_1,\Gamma_2,\Gamma_3)$ is a $3$-subnet of $(\Lambda_1,\Lambda_2,\Lambda_3)$ which realizes $H$.
\end{lemma}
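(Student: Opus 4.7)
The plan is to verify first that $(\Gamma_1,\Gamma_2,\Gamma_3)$ is a dual $3$-subnet of $(\Lambda_1,\Lambda_2,\Lambda_3)$, and then that this subnet realizes $H$ in the sense of the paper. The first part reduces to a closure computation in $G$ made possible by normality of $H$; the second part is essentially Proposition \ref{group}, up to a careful relabeling of one component.

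For the closure step I would pick arbitrary $a=g_1 h_1\in\Gamma_1$ and $b=g_2 h_2\in\Gamma_2$ and rewrite
\[
a\cdot b = g_1 h_1 g_2 h_2 = g_1 g_2 (g_2^{-1} h_1 g_2) h_2 \in (g_1\cdot g_2)\cdot H=\Gamma_3,
\]
where normality of $H$ guarantees $g_2^{-1} h_1 g_2\in H$. Since $(\Lambda_1,\Lambda_2,\Lambda_3)$ realizes $G$, the unique point of $\Lambda_3$ on the line through $\alpha(a)$ and $\beta(b)$ is $\gamma(a\cdot b)$, and the display shows that it already lies in $\Gamma_3$. The symmetric statements for the pairs $(\Gamma_1,\Gamma_3)$ and $(\Gamma_2,\Gamma_3)$ follow from the same normality argument applied to the equations $a\cdot x=c$ and $y\cdot b=c$ with $c\in\Gamma_3$. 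Since each $\Gamma_i$ has cardinality $n=|H|$, this yields a dual $3$-subnet of order $n$.

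For realizability, I would introduce bijections $\alpha'\colon H\to\Gamma_1$, $\beta'\colon H\to\Gamma_2$, $\gamma'\colon H\to\Gamma_3$ by $\alpha'(h)=\alpha(g_1\phi^{-1}(h))$, $\beta'(h)=\beta(g_2 h)$, $\gamma'(h)=\gamma(g_1 g_2 h)$, where $\phi$ is the automorphism of $H$ induced by conjugation by $g_2^{-1}$. A direct calculation parallel to the one above shows that $\alpha'(h_1),\beta'(h_2),\gamma'(h_3)$ are collinear precisely when $h_1\cdot h_2=h_3$ in $H$. The only mild obstacle is this bookkeeping of the inner-automorphism twist: without it, the induced operation on the three components is only isotopic to $H$ rather than equal to it. This is exactly the point already settled by Proposition \ref{group}, which I could invoke in lieu of the explicit relabeling, since rewriting an isotopy as an isomorphism by adjusting a single component bijection is routine.
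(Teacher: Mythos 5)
Your proof is correct and follows essentially the same route as the paper, which states Lemma~\ref{befanabis} without a separate proof because it is the coset-closure computation (using normality of $H$) combined with Proposition~\ref{group}, i.e.\ the observation that the induced quasigroup on $(g_1\cdot H,\,g_2\cdot H,\,(g_1\cdot g_2)\cdot H)$ is isotopic, hence after relabeling one component isomorphic, to $H$. Your explicit bijections $\alpha',\beta',\gamma'$ with the conjugation twist $\phi$ are just a concrete instantiation of the relabeling carried out in the proof of Proposition~\ref{group}, so nothing is missing.
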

 If $g_1$ and $g_2$ range independently over $G$, we obtain as many as $k^2$ such dual $3$-nets, each being called {\em{a dual $3$-net realizing the normal subgroup $H$ as a subgroup of $G$}}.

\section{The infinite families of dual $3$-nets realizing a group}
A dual $3$-net $(\Lambda_1,\Lambda_2,\Lambda_3)$ with $n\geq 4$ is said to be {\em{algebraic}} if all its points lie on a (uniquely determined) plane cubic  $\cF$, called the {\em{associated}} plane cubic  of $(\Lambda_1,\Lambda_2,\Lambda_3)$.
Algebraic dual $3$-nets fall into three subfamilies according as the plane cubic splits into three lines, or in an irreducible conic and a line, or it is irreducible.

\subsection{Proper algebraic dual $3$-nets}
An algebraic dual $3$-net $(\Lambda_1,\Lambda_2,\Lambda_3)$ is said to be {\em{proper}} if its points lie on an irreducible plane cubic $\cF$.

\begin{proposition}
\label{algebraic1} Any proper algebraic dual 3-net $(\Lambda_1,\Lambda_2,\Lambda_3)$ realizes a group $M$. There is a subgroup $T\cong M$ in $(\cF,+)$ such that each component $\Lambda_i$ is a coset $T+g_i$ in $(\cF,+)$ where $g_1+g_2+g_3=0$.
\end{proposition}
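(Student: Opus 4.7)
The plan is to exploit the additive group $(\cF,+)$ provided by Propositions~\ref{groupcubic} and~\ref{groupcubic1} (choosing any inflection point as the identity in the non-singular case; in the singular case, $+$ is defined on $\cF\setminus\{U\}$, and one must first note that the unique singular point $U$ cannot belong to any $\Lambda_i$, for otherwise a secant through $U$ to another component would have no genuine third intersection with $\cF$ and thus fail the dual $3$-net axiom). The working principle is the collinearity criterion $P+Q+R=0$ for three distinct points of $\cF$. First I would choose base points $g_i\in\Lambda_i$ with $g_1+g_2+g_3=0$: pick any $g_1\in\Lambda_1$ and $g_2\in\Lambda_2$, take the line $\ell$ through them, and let $g_3$ be the third intersection of $\ell$ with $\cF$; the dual $3$-net property forces $g_3\in\Lambda_3$, and the group law gives the desired relation.

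Next I would translate and set $T_i:=\{P-g_i \mid P\in\Lambda_i\}\subset(\cF,+)$, so that $0\in T_i$ and $|T_i|=n$. For any $t_1\in T_1$ and $t_2\in T_2$ the points $g_1+t_1\in\Lambda_1$ and $g_2+t_2\in\Lambda_2$ determine a line meeting $\Lambda_3$ in a unique point, which must be $g_3-t_1-t_2$ because the three collinear points of $\cF$ sum to zero. Therefore
\[
-t_1-t_2\in T_3\quad\text{for all }t_1\in T_1,\ t_2\in T_2.
\]
Specialising $t_2=0$ yields $-T_1\subset T_3$, and $t_1=0$ yields $-T_2\subset T_3$; the symmetric role of the three components gives the reverse inclusions, so $T_3=-T_1=-T_2$ and in particular $T_1=T_2$. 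Writing $T:=T_1=T_2$ and feeding this back into the displayed relation, one obtains $t_1+t_2\in T$ for all $t_1,t_2\in T$, i.e.\ $T$ is closed under $+$. As $T$ is a finite subset of an abelian group containing $0$ and closed under addition, it is automatically a subgroup (each element has finite order, so $-t$ is a positive multiple of $t$); consequently $T_3=-T=T$ as well, and $\Lambda_i=g_i+T$ for $i=1,2,3$.

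Finally, to see that $(\Lambda_1,\Lambda_2,\Lambda_3)$ realizes a group isomorphic to $T$, I would exhibit the bijections
\[
\alpha(t)=g_1+t,\qquad \beta(t)=g_2+t,\qquad \gamma(t)=g_3-t
\]
from $T$ to $\Lambda_1,\Lambda_2,\Lambda_3$ respectively; collinearity of $\alpha(a),\beta(b),\gamma(c)$ amounts to $(g_1+a)+(g_2+b)+(g_3-c)=0$, i.e.\ $a+b=c$ in $T$, so $M\cong T$. The main obstacle is the careful book-keeping in the singular case (excluding $U$ from every component and ruling out degenerate tangential secants, both of which ultimately follow from the dual $3$-net axiom combined with the fact that a line through $U$ intersects $\cF$ in at most one further point); once that is handled, the algebraic core of the proof is the ``translates of a subgroup'' argument in the second paragraph.
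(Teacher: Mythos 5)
Your proof is correct and follows essentially the same route as the paper's: both work inside the abelian group $(\cF,+)$ of the cubic, use the criterion that three distinct collinear points sum to zero, and conclude that each $\Lambda_i$ is a coset of a common finite subgroup $T$ with $g_1+g_2+g_3=0$. Your organization is slightly more direct (you read off the relation $-T_1-T_2\subseteq T_3$ immediately from the net axiom, whereas the paper first solves the equation $A_1-A_2=X-A_3$ inside $\Lambda_1$ via auxiliary points), and your explicit treatment of the singular point $U$ is a welcome detail the paper leaves implicit, but the substance is the same.
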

\begin{proof}
We do some computation in $(\cF,+)$. Let $A_1,A_2,A_3 \in \Lambda_1$ three distinct points viewed as elements in $(\cF,+)$.  First we show that the solution of the equation in $(\cF,+)$
\begin{equation} \label{equation.in_A} A_1 - A_2 = X - A_3
\end{equation} belongs to $\Lambda_1$.
Let $C \in \Lambda_3$. From the definition of a dual $3$-net, there exist $B_i \in \Lambda_2$ such that
$A_i+B_i+C=0$ for $i=1,2,3$. Now choose $C_1\in \Lambda_3$ for which $ A_1+B_2+C_1 = 0 $, and then choose $A^* \in \Lambda_1$ for which $A^* + B_3 + C_1 =0$. Now,
\begin{equation}
\label{25022011}
\begin{array}{lll}
A^*- A_3 =  - B_3 - C_1 - (  -B_3  - C  )   =  C - C_1 \\
A_1- A_2  =  - B_2 - C_1 - (  -B_2  - C  ) =  C - C_1
\end{array}
\end{equation}
Therefore, $A^*$ is a solution of Equation (\ref{25022011}).

Now we are in a position to prove that $\Lambda_1$ is a coset of a subgroup of $(\cF,+)$. For $A_0\in \Lambda_1$, let $T_1=\{A-A_0|A\in \Lambda_1\}$. Since $(A_1-A_0)-(A_2-A_0)=A_1-A_2,$ Equation (\ref{25022011}) ensures the existence of $A^*\in \Lambda_1$ for which $A_1-A_2=A^*-A_0$ whenever $A_1,A_2\in \Lambda_1$. Hence $(A_1-A_0)-(A_2-A_0)\in T_1$. From this, $T_1$ is a subgroup of $(\cF,+)$, and therefore $\Lambda_1$ is a coset $T+g_1$ of $T_1$ in $(\cF,+)$.

Similarly, $\Lambda_2=T_2+g_2$ and $\Lambda_3=T_3+g_3$ with some subgroups $T_2,T_3$ of $(\cF,+)$ and elements $g_2,g_3\in (\cF,+)$. It remains to show that $T_1=T_2=T_3$. The line through the points $g_1$ and $g_2$
meets $\Lambda_3$ in a point $t^*+g_3$. Replacing $g_3$ with $g_3+t^*$ allows to assume that $g_1+g_2+g_3=0$. Then three points $g_i+t_i$ with $t_i\in T_i$ is collinear if and only if $t_1+t_2+t_3=0$. For $t_3=0$ this yields $t_2=-t_1$. Hence, every element of $T_2$ is in $T_1$, and the converse also holds. From this,  $T_1=T_2$. Now, $t_3=-t_1-t_2$ yields that $T_3=T_1$. Therefore $T=T_1=T_2=T_3$ and $\Lambda_i=T+g_i$ for $i=1,2,3$. This shows that $(\Lambda_1,\Lambda_2,\Lambda_3)$ realizes a group $M\cong T$.
\end{proof}
\subsection{Triangular dual 3-nets}
\label{triangular3net}
An algebraic dual $3$-net $(\Lambda_1,\Lambda_2,\Lambda_3)$ is {\em regular} if the components lie on three lines, and it is either of {\em{pencil type}} or {\em{triangular}} according as the three lines are either concurrent, or they are the sides of a triangle.
\begin{lemma}
\label{lem250220111} Every regular dual $3$-net of order $n$ is triangular.
\end{lemma}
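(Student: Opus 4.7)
My plan is to argue by contradiction, assuming the regular dual $3$-net is of pencil type: the three lines $\ell_1,\ell_2,\ell_3$ containing $\Lambda_1,\Lambda_2,\Lambda_3$ concur at a single point $P$. I would first observe that $P$ cannot belong to any $\Lambda_i$. Indeed, if say $P\in\Lambda_1$, then for any $A_2\in\Lambda_2$ the line $PA_2$ would coincide with $\ell_2$ and so contain all $n\geq 4$ points of $\Lambda_2$, contradicting the axiom that a line meeting two distinct components meets each in exactly one point. In particular every pair $(A_1,A_2)\in\Lambda_1\times\Lambda_2$ spans a line not through $P$, meeting $\ell_3$ in a unique point of $\Lambda_3$.

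Next I would introduce affine coordinates with $P$ as origin and fix direction vectors $v_i$ on each $\ell_i$, writing $\Lambda_i=\{t v_i\mid t\in T_i\}$ with $T_i\subset\mathbb K^\times$ of size $n$. A routine determinant computation of $\det[sv_2-tv_1,\;rv_3-tv_1]=0$ shows that $tv_1,sv_2,rv_3$ are collinear if and only if $\tfrac{1}{r}$ is a fixed affine combination of $\tfrac{1}{s}$ and $\tfrac{1}{t}$ with nonzero coefficients (those coefficients being the ratios of the $\det[v_i,v_j]$, which are nonzero because $v_1,v_2,v_3$ are pairwise independent). After rescaling the reciprocals, the 3-net condition translates into the following combinatorial assertion: there exist finite sets $X,Y,Z\subset\mathbb K$ with $|X|=|Y|=|Z|=n$ such that for every pair $(x,y)\in X\times Y$ there is a unique $z\in Z$ with $x-y=z$.

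The heart of the proof is then to extract a finite additive subgroup of $\mathbb K$ from this configuration. Fixing $x_0\in X$, the map $y\mapsto x_0-y$ sends $Y$ into $Z$; being injective with image of cardinality $n=|Z|$, it is a bijection $Y\to Z$, whence $Z=x_0-Y$. Doing the same with a different $x_0'\in X$ yields $x_0-Y=x_0'-Y$, so that $x_0-x_0'$ lies in the additive stabilizer $H=\{h\in\mathbb K\mid Y+h=Y\}$. Hence $X-x_0\subseteq H$, which gives $|H|\geq n$, while $Y$ being $H$-invariant of cardinality $n$ forces $|H|\leq n$. Thus $|H|=n$, and $H$ is a finite additive subgroup of $(\mathbb K,+)$ of order $n\geq 4$.

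Finally the contradiction comes from the standing hypothesis on $\mathbb K$: in characteristic $0$ the additive group is torsion-free, and in characteristic $p>n$ every finite additive subgroup is an $\mathbb F_p$-subspace whose order is a power of $p$ and therefore at least $p>n$ if nontrivial. In either case, there is no additive subgroup of order $n\geq 4$, contradicting $|H|=n$. The main obstacle in carrying out this plan is the coordinate bookkeeping needed to reach the clean reformulation $X-Y\subseteq Z$; once that is in hand, the extraction of $H$ and the final contradiction via the characteristic hypothesis are immediate.
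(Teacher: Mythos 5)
Your proof is correct and follows essentially the same route as the paper: assume pencil type, coordinatize the three concurrent lines, show collinearity forces an additive structure, and derive a finite additive subgroup of $(\mathbb K,+)$ of order $n$, which is impossible since $p=0$ or $p>n$. The paper reaches the subgroup more directly by normalizing so that a common transversal line gives $L_1=L_2=L_3$ with $\zeta=\xi+\eta$, whereas you extract it as the stabilizer $H$ of $Y$; this is only a difference in bookkeeping, not in substance.
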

\begin{proof} Assume that the components of a regular dual $3$-net $(\Lambda_1,\Lambda_2,\Lambda_3)$ lie on three concurrent lines. Using homogeneous coordinates in $PG(2,\mathbb{K})$, these lines are assumed to be those with equations $Y=0, X=0, X-Y=0$ respectively, so that the line of equation $Z=0$ meets each component. Therefore, the points in the components  may be labeled in such a way that
$$
\begin{array}{lll}
\Lambda_1=\{(1,0,\xi)|\xi\in L_1\},\,
\Lambda_2=\{(0,1,\eta)|\eta\in L_2\},\,
\Lambda_3=\{(1,1,\zeta)|\zeta\in L_3\},
\end{array}
$$
with $L_i$ subsets of $\mathbb{K}$ containing $0$. By a straightforward computation, three points $P=(1,0,\xi)$, $Q=(0,1,\eta)$, $R=(1,1,\zeta)$ are collinear if  and only if $\zeta=\xi+\eta$. Therefore, $L_1=L_2=L_3$ and  $(\Lambda_1,\Lambda_2,\Lambda_3)$ realizes a subgroup of the additive group of $\mathbb{K}$ of order $n$. Therefore $n$ is a power of $p$. But this contradicts the hypothesis $p>n$.
\end{proof}
For a triangular dual $3$-net, the (uniquely determined) triangle whose sides contain the components is called the {\em{associated}} triangle.
\begin{proposition}
\label{triangular} Every triangular dual $3$-net realizes a cyclic group isomorphic to a multiplicative group of $\mathbb{K}$.
\end{proposition}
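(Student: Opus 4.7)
The plan is to mimic the coordinate setup of Lemma \ref{lem250220111} but with the three component-carrying lines in general position rather than concurrent. After a projectivity, assume the associated triangle is the fundamental triangle, so that the components lie on the sides $X=0$, $Y=0$, $Z=0$. Parametrizing by the non-vertex points of each side, I can write
$$\Lambda_1=\{(0,1,\xi)\mid\xi\in L_1\},\quad\Lambda_2=\{(\eta,0,1)\mid\eta\in L_2\},\quad\Lambda_3=\{(1,\zeta,0)\mid\zeta\in L_3\},$$
with $L_1,L_2,L_3\subset\mathbb K^*$ of size $n$. A direct $3\times 3$ determinant computation shows that the three points $(0,1,\xi),(\eta,0,1),(1,\zeta,0)$ are collinear if and only if $\xi\eta\zeta=-1$.

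Next I normalize. Fix a point $A_1\in\Lambda_1$ and let $B_2\in\Lambda_2$, $C_3\in\Lambda_3$ be the unique points of the other two components lying on the line through $A_1,B_2$ and $A_1,C_3$ respectively; by the $3$-net axiom $A_1,B_2,C_3$ are collinear. Rescaling the three coordinate vectors by constants absorbs the $-1$ and sends $A_1,B_2,C_3$ to the triple $(\xi,\eta,\zeta)=(1,1,1)$. After this rescaling the collinearity condition becomes $\xi\eta\zeta=1$ and $1\in L_i$ for $i=1,2,3$.

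Now I extract the group structure. Taking $\xi=1$ shows that $\eta\mapsto 1/\eta$ is a bijection $L_2\to L_3$, and taking $\eta=1$ that $\xi\mapsto 1/\xi$ is a bijection $L_1\to L_3$; comparing the two images yields $L_1=L_2$. Taking $\zeta=1$ shows that $L_1$ is closed under inversion, hence $L_1=L_2=L_3=:L$. Finally, for any $\xi_1,\xi_2\in L$, the dual $3$-net axiom provides $\zeta\in L_3=L$ with $\xi_1\xi_2\zeta=1$, i.e.\ $\xi_1\xi_2=1/\zeta\in L$; thus $L$ is closed under multiplication. Hence $L$ is a finite subgroup of $\mathbb K^*$ of order $n$, which is cyclic since every finite subgroup of the multiplicative group of a field is cyclic. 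The coordinate description $\xi\eta\zeta=1$ shows that the bijections $\alpha:L\to\Lambda_1$, $\beta:L\to\Lambda_2$, $\gamma:L\to\Lambda_3$ defined by the parametrization realize the group $(L,\cdot)$.

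The argument is essentially a coordinatization plus a standard closure verification, so there is no serious obstacle; the only care needed is in the normalization step, i.e.\ in choosing the rescaling of the three coordinate axes that simultaneously places a collinear triple at $(1,1,1)$ and rewrites the collinearity condition in the multiplicatively homogeneous form $\xi\eta\zeta=1$.
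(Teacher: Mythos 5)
Your proof is correct and follows essentially the same route as the paper's: coordinatize so the components lie on the sides of the fundamental triangle, express collinearity as a multiplicative condition on the parameters, normalize so that $1\in L_i$, and conclude that $L_1=L_2=L_3$ is a finite (hence cyclic) subgroup of $\mathbb{K}^*$. The only differences are cosmetic --- your symmetric parametrization gives $\xi\eta\zeta=-1$, normalized by reparametrizing the sides, where the paper's asymmetric one gives $\xi\zeta=\eta$ and normalizes via the unit point --- and you spell out the closure verification that the paper leaves implicit.
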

\begin{proof} Using homogeneous coordinates in $PG(2,\mathbb{K})$, the vertices of the triangle are assumed to be the points $O=(0,0,1),\,X_\infty=(1,0,0),\,Y_\infty=(0,1,0).$ For $i=1,2,3$, let $\ell_i$ denote the fundamental line of equation $Y=0,\, X=0,\,Z=0$ respectively.
Therefore the points in the components lie on the fundamental lines and they may be labeled in such a way that
$$\Lambda_1=\{(\xi,0,1)|\xi\in L_1\},\,\Lambda_2=\{(0,\eta,1)|\eta\in L_2\},\,\Lambda_3=\{(1,-\zeta,0)|\zeta\in L_3\}$$
with $L_i$ subsets of $\mathbb{K}^*$ of a given size $n$. With this setting, three points $P=(\xi,0,1)$, $Q=(0,\eta,1)$, $R=(1,-\zeta,0)$
are collinear if and only if $\xi\zeta=\eta$.  With an appropriate choice of the unity point of the coordinate system, both $1\in L_1$ and $
1\in L_2$ may also be assumed. From $1\in L_1$, we have that $L_2=L_3$. This together with $1\in L_2$ imply that $L_1=L_2=L_3=L$. Since $1\in L$,
$L$ is a finite multiplicative subgroup of $\mathbb{K}$. In particular, $L$ is cyclic.
\end{proof}
\begin{rem}
\label{rem1} In the proof of Proposition \ref{triangular}, if the unity point of the coordinate system is arbitrarily chosen, the subsets $L_1,L_2$ and $L_3$ are not necessarily subgroups. Actually, they are cosets of (the unique)  multiplicative cyclic subgroup $H$, say $L_1=aH,\,L_2=bH$ and $L_3=cH$, with $ac=b$. Furthermore, since every $h\in H$ defines a projectivity $\sigma_h:\, x\mapsto hx$ of the projective line, and these projectivities form a group isomorphic to $H$, it turns out that $L_i$ is an orbit of a cyclic projectivity group of $\ell_i$ of order $n$, for $i=1,2,3$.
\end{rem}
\begin{proposition}
\label{trihom} Let $(\Lambda_1,\Lambda_2,\Lambda_3)$ be a triangular dual $3$-net. Then every point of $(\Lambda_1,\Lambda_2,\Lambda_3)$ is the center of a unique involutory homology which preserves $(\Lambda_1,\Lambda_2,\Lambda_3)$.
%More precisely, $\varphi_P$ preserves $\Lambda_i$  while  interchanges the other two components.
\end{proposition}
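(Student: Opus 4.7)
The plan is to work in the coordinate system fixed by Proposition \ref{triangular} and the first part of Remark \ref{rem1}, in which the three components are
\[ \Lambda_1=\{(\xi,0,1):\xi\in L\},\quad \Lambda_2=\{(0,\eta,1):\eta\in L\},\quad \Lambda_3=\{(1,-\zeta,0):\zeta\in L\}, \]
with $L\le\mathbb{K}^*$ a cyclic subgroup of order $n$, and three points $(\xi,0,1),(0,\eta,1),(1,-\zeta,0)$ collinear iff $\xi\zeta=\eta$. Since the three components play symmetric roles under a coordinate permutation of $PG(2,\mathbb{K})$, it is enough to produce a unique involutory homology centred at an arbitrary point $P=(\xi_0,0,1)\in\Lambda_1$.

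For existence I simply exhibit the homology. Any homology with centre $P\in\ell_1$ fixes the line $\ell_1$ (lines through the centre are fixed) and must therefore permute the pair $\{\Lambda_2,\Lambda_3\}$; as argued in the uniqueness step below, it has to \emph{swap} them, so its axis must contain the vertex $V=\ell_2\cap\ell_3=(0,1,0)$. Guided by this, I consider the projectivity $\sigma$ induced by the matrix
\[ M=\begin{pmatrix} 0 & 0 & \xi_0\\ 0 & -1 & 0\\ 1/\xi_0 & 0 & 0 \end{pmatrix}. \]
A short verification shows that $M^2=I$, that the $+1$-eigenspace is spanned by $P$, and that the $-1$-eigenspace is the plane $X+\xi_0 Z=0$; hence $\sigma$ is an involutory homology with centre $P$ and axis the line $X+\xi_0 Z=0$ through $V$. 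Evaluating $\sigma$ on the parametric form of each component yields
\[ \sigma(\xi,0,1)=(\xi_0^2/\xi,0,1),\quad \sigma(0,\eta,1)=(1,-\eta/\xi_0,0),\quad \sigma(1,-\zeta,0)=(0,\xi_0\zeta,1), \]
and multiplicative closure of $L$ ensures that each right-hand side is a point of a component, so $\sigma$ preserves the dual $3$-net, fixing $\Lambda_1$ setwise and swapping $\Lambda_2\leftrightarrow\Lambda_3$.

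For uniqueness, let $\sigma'$ be any involutory homology with centre $P$ preserving $(\Lambda_1,\Lambda_2,\Lambda_3)$. Since $\ell_1$ passes through the centre $P$ it is fixed by $\sigma'$, so $\sigma'(\Lambda_1)=\Lambda_1$ and $\sigma'$ permutes the pair $\{\Lambda_2,\Lambda_3\}$. If it fixed each of them individually then both $\ell_2$ and $\ell_3$ would be fixed lines not containing the centre, forcing both to coincide with the axis of $\sigma'$, a contradiction. Hence $\sigma'$ swaps $\Lambda_2$ and $\Lambda_3$. For every $Q=(0,\eta,1)\in\Lambda_2$ the image $\sigma'(Q)$ lies on the fixed line joining $P$ and $Q$ and also on $\ell_3$; intersecting these two lines gives the unique point $(1,-\eta/\xi_0,0)=\sigma(Q)$. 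Taking three distinct points $Q_1,Q_2,Q_3\in\Lambda_2$ together with $P$ provides four points in general position (possible because $n\ge4$ and $P\notin\ell_2$) on which $\sigma'$ and $\sigma$ coincide, and a projectivity of the plane is determined by its action on four such points. Thus $\sigma'=\sigma$.

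The only nonroutine step is the guess of the matrix $M$: geometrically, $\sigma$ is the projective realisation of the group-theoretic reflection $\xi\mapsto\xi_0^2\xi^{-1}$ on $L$, and the decisive input that makes the computation close is exactly the multiplicative closure of $L$ supplied by Remark \ref{rem1}. Everything else reduces to linear algebra on a $3\times 3$ matrix and the elementary remark that two distinct lines not through the centre cannot both be the axis of the same homology.
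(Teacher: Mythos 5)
Your existence construction is the same as the paper's: your matrix $M$, multiplied by the scalar $\xi_0$, is exactly the matrix
$\left(\begin{smallmatrix} 0 & 0 & \xi_0^2\\ 0 & -\xi_0 & 0\\ 1 & 0 & 0\end{smallmatrix}\right)$
that the paper writes down for $\varphi_{\xi_0}$, with the same centre and the same axis (the line through $Y_\infty$ and $(-\xi_0,0,1)$ is the line $X+\xi_0 Z=0$). The paper in fact stops there and never argues uniqueness, so your second half is a genuine addition rather than a divergence.

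That uniqueness argument is right in outline but has one concrete slip at the end: the four points $P,Q_1,Q_2,Q_3$ with $Q_1,Q_2,Q_3\in\Lambda_2$ are \emph{not} in general position, because the three $Q_i$ all lie on $\ell_2$; so you cannot invoke the fundamental theorem of projective geometry on that quadruple. The repair is immediate and uses only what you have already established: your intersection argument determines $\sigma'$ not only on $\Lambda_2$ (as $\sigma'(Q)=\overline{PQ}\cap\ell_3$) but symmetrically on $\Lambda_3$ (as $\sigma'(R)=\overline{PR}\cap\ell_2$), hence $\sigma'$ and $\sigma$ agree on all of $\Lambda_2\cup\Lambda_3$; now two points of $\Lambda_2$ together with two points of $\Lambda_3$ do form a quadruple with no three collinear (a line containing two points of $\Lambda_2$ is $\ell_2$, which misses $\Lambda_3$, and vice versa), and the fundamental theorem applies to that quadruple. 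Alternatively, since $\sigma'\sigma^{-1}$ fixes the $n\ge 3$ points of $\Lambda_2$ on $\ell_2$ it fixes $\ell_2$ pointwise, and fixing in addition the points of $\Lambda_3$ off $\ell_2$ forces it to be the identity. Everything else in your write-up (the verification that $M^2=I$, the eigenspace computation, the fact that multiplicative closure of $L$ is what makes the images land back in the components, and the observation that $\ell_2$ and $\ell_3$ cannot both be the axis) is correct.
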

\begin{proof} The point $(\xi,0,1)$ is the center and the line through $Y_\infty$ and the point $(-\xi,0,1)$ and  is the axis of the involutory homology $\varphi_{\xi}$ associated to the  matrix
$$
\left(
  \begin{array}{ccc}
    0 & 0 & \xi^2 \\
    0 & -\xi & 0 \\
    1 & 0 & 0 \\
  \end{array}
\right).
$$
With the above notation, if $\xi\in aH$ then $h_\xi$ preserves $\Lambda_1$ while it sends any point in $\Lambda_2$ to a point in $\Lambda_3$,  and viceversa. Similarly, for $\eta\in bH$  and $\zeta\in cH$ where $\psi_\eta$ and $\theta_\zeta$ are the involutory homologies associated to the matrices
$$
\left(
  \begin{array}{ccc}
    -\eta & 0 & 0 \\
    0 & 0 & \eta^2 \\
    0 & 1 & 0 \\
  \end{array}
\right)\,\, {\rm{and}}\,\,
\left(
  \begin{array}{ccc}
   0 & 1 & 0 \\
    \zeta^2 & 0 & 0 \\
    0 & 0 & \zeta \\
   \end{array}
\right).
$$
\end{proof}
With the notation introduced in the proof of Proposition \ref{triangular}, let $\Phi_1=\{\varphi_{\xi}\varphi_{\xi'}|\xi,\xi'\in aH\}$ and $\Phi_2=\{\psi_{\eta}\psi_{\eta'}|\eta,\eta'\in bH\}$. Then both are cyclic groups isomorphic to $H$. A direct computation gives the following result.
\begin{proposition}
\label{trihomhom} $\Phi_1\cap \Phi_2$ is either trivial or has order $3$.
\end{proposition}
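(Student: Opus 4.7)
The proof should proceed by diagonalising $\Phi_1$ and $\Phi_2$ via the explicit matrix formulas recorded in the proof of Proposition \ref{trihom}, after which the intersection becomes transparent. The key observation is that, viewed in $\mathrm{PGL}(3,\mathbb{K})$, both $\Phi_1$ and $\Phi_2$ turn out to be one-parameter subgroups of the standard maximal torus of diagonal matrices, and their intersection is cut out by a single cubic relation.

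First I would multiply the two matrices for $\varphi_\xi$ and $\varphi_{\xi'}$. A direct computation gives the diagonal matrix $\mathrm{diag}(\xi^{2},\ \xi\xi',\ \xi'^{2})$, which, after dividing by the scalar $\xi\xi'$, represents the same projectivity as $\mathrm{diag}(\xi/\xi',\ 1,\ \xi'/\xi)$. Setting $t=\xi/\xi'$, and noting that as $\xi,\xi'$ range over the coset $aH$ the ratio ranges over all of $H$, one obtains
$$\Phi_1 \;=\; \{\mathrm{diag}(t,1,t^{-1}) \mid t\in H\}.$$
The analogous multiplication of the matrices for $\psi_\eta$ and $\psi_{\eta'}$ yields $\mathrm{diag}(\eta\eta',\eta^{2},\eta'^{2})$, hence projectively $\mathrm{diag}(1,\eta/\eta',\eta'/\eta)$, so
$$\Phi_2 \;=\; \{\mathrm{diag}(1,s,s^{-1}) \mid s\in H\}.$$
Both groups are visibly cyclic and isomorphic to $H$, as already asserted in the text preceding the proposition.

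Next I would compute the intersection. A matrix $\mathrm{diag}(t,1,t^{-1})$ represents the same element of $\mathrm{PGL}(3,\mathbb{K})$ as $\mathrm{diag}(1,s,s^{-1})$ precisely when there exists a scalar $\lambda$ with $t=\lambda$, $1=\lambda s$ and $t^{-1}=\lambda s^{-1}$. Eliminating $\lambda$ and $s$ from these three equations yields the single relation $t^{3}=1$. Therefore
$$\Phi_1\cap\Phi_2 \;\cong\; \{t\in H \mid t^{3}=1\}.$$
Since $H$ is cyclic of order $n$ and either $p=0$ or $p>n$, the subgroup of third roots of unity in $H$ has order $\gcd(3,n)$; this is $1$ when $3\nmid n$ and $3$ when $3\mid n$, which is exactly the statement of the proposition.

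The argument is essentially a bookkeeping exercise, and there is no serious obstacle. The only point requiring minor care is the projective normalisation: the raw matrix products are not equal as matrices, only up to scalar, and one must pass to $\mathrm{PGL}(3,\mathbb{K})$ before writing down the cubic relation $t^{3}=1$. The hypothesis $p>n$ (or $p=0$) is used implicitly at the last step to ensure that the number of solutions of $t^{3}=1$ inside a cyclic group of order $n$ really is $\gcd(3,n)$ rather than being distorted by characteristic.
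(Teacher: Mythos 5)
Your computation is correct and is exactly the ``direct computation'' that the paper alludes to (the paper states the proposition without writing out a proof): diagonalising the products of the homology matrices gives $\Phi_1=\{\mathrm{diag}(t,1,t^{-1})\}$ and $\Phi_2=\{\mathrm{diag}(1,s,s^{-1})\}$ with $t,s\in H$, and the projective identification forces $t^3=1$, so the intersection has order $\gcd(3,n)\in\{1,3\}$. The only quibble is your closing remark: the count $\gcd(3,n)$ needs nothing about the characteristic, since $H$ is an abstract cyclic group of order $n$ and the number of its elements of order dividing $3$ is $\gcd(3,n)$ unconditionally.
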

Some useful consequences are stated in the following proposition.
\begin{proposition}
\label{trihomhom+} Let $\Theta=\langle \Phi_1, \Phi_2\rangle$. Then
$$ |\Theta|= \left\{
\begin{array}{lll}
\,\,\,|H|^2,\,\,\,{\rm{when\,\, gcd.}}(3,|H|)=1;\\
\frac{1}{3}|H|^2, \,\, {\rm{when\,\, gcd.}}(3,|H|)=3.
\end{array}
\right.
$$ Furthermore, $\Theta$ fixes the vertices of the fundamental triangle, and no non-trivial element of $\Theta$ fixes a point outside the sides of the fundamental triangle.
\end{proposition}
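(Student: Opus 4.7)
The plan is to carry out the matrix calculation begun in the proof of Proposition~\ref{trihom} and deduce everything about $\Theta$ from the fact that its elements are diagonal (in the coordinate system adapted to the fundamental triangle). First, I would compute $M_\xi M_{\xi'}$ where $M_\xi$ is the matrix of $\varphi_\xi$; a direct multiplication gives the diagonal matrix $\mathrm{diag}(\xi^2,\xi\xi',(\xi')^2)$, which projectively equals $\mathrm{diag}(\xi/\xi',1,\xi'/\xi)$. Since $\xi,\xi'$ range over $aH$, the ratio $\xi/\xi'$ ranges over $H$, so
\[
\Phi_1=\{\mathrm{diag}(h,1,h^{-1})\mid h\in H\}\cong H.
\]
The analogous computation with $N_\eta$ gives
\[
\Phi_2=\{\mathrm{diag}(1,\mu,\mu^{-1})\mid \mu\in H\}\cong H.
\]

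The key structural observation is that all elements of $\Phi_1$ and $\Phi_2$ are (projectively) diagonal and therefore commute. Hence $\Theta=\Phi_1\Phi_2$ and every element of $\Theta$ is of the form $\mathrm{diag}(h,\mu,(h\mu)^{-1})$ for some $h,\mu\in H$. This furnishes a surjective homomorphism
\[
\pi\colon H\times H\longrightarrow \Theta,\qquad (h,\mu)\longmapsto \mathrm{diag}(h,\mu,(h\mu)^{-1}).
\]
The kernel consists of those $(h,\mu)$ for which the image is the identity of $PGL_3(\mathbb{K})$, i.e.\ scalar. This forces $h=\mu=(h\mu)^{-1}$, and hence $h^3=1$, $\mu=h$. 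By Proposition~\ref{trihomhom} (or equivalently a one-line check), this kernel has order $\gcd(3,|H|)$, so $|\Theta|=|H|^2/\gcd(3,|H|)$, as claimed.

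For the remaining statements, since every element of $\Theta$ is projectively diagonal, it sends each fundamental coordinate point $(1{:}0{:}0)$, $(0{:}1{:}0)$, $(0{:}0{:}1)$ to itself; these are precisely the vertices of the fundamental triangle. Conversely, if $\mathrm{diag}(a,b,c)$ fixes a point $(x{:}y{:}z)$ with $xyz\neq 0$, then $a=b=c$ up to scalar, i.e.\ the projectivity is trivial. Since the three sides of the fundamental triangle are exactly the loci $XYZ=0$, this is the assertion that no non-trivial element of $\Theta$ fixes a point off those sides.

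No serious obstacle arises: the entire argument is a direct coordinate calculation, and the only point requiring a moment of care is correctly identifying the kernel of $\pi$ with $\Phi_1\cap\Phi_2$ so that the order formula matches Proposition~\ref{trihomhom}. Everything else is a consequence of the elementary fact that a non-scalar diagonal projectivity fixes only the three coordinate vertices and the three coordinate axes.
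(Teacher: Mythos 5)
Your proof is correct and is precisely the ``direct computation'' that the paper invokes without writing out: the products $\varphi_\xi\varphi_{\xi'}$ and $\psi_\eta\psi_{\eta'}$ are projectively the diagonal maps $\mathrm{diag}(h,1,h^{-1})$ and $\mathrm{diag}(1,\mu,\mu^{-1})$, the order formula follows from the kernel of $(h,\mu)\mapsto\mathrm{diag}(h,\mu,(h\mu)^{-1})$ having order $\gcd(3,|H|)$ in the cyclic group $H$, and the fixed-point claim is the standard fact about non-scalar diagonal projectivities. No discrepancy with the paper's (unstated) argument.
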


%\begin{proposition}
%\label{trihomhombis} Let $P$ be a point in $PG(2,\mathbb{K})$ such that $\rho_1(P)=\rho_2(P)$
%with $\rho_1\in \Phi_1$ and $\rho_2 \in \Phi_2$. If $P$ lies outside the
%sides of the fundamental triangle, then $\rho_1^3=\rho_2^3=1$.
%\end{proposition}

%We investigate the case where the above triangular dual $3$-net $(\Lambda_1,\Lambda_2,\Lambda_3)$ %is contained in a larger dual $3$-net $(\Sigma_1,\Sigma_2,\Sigma_3)$ realizing a group $G$ such %that $(\Lambda_1,\Lambda_2,\Lambda_3)$ realizes $H$ as subgroup of $G$. To avoid trivial cases, we %assume that $(\Lambda_1,\Lambda_2,\Lambda_3)$ has order $n\ge 3$.
%\begin{propositon}
%\label{10giugno1}
%\end{proposition}
%\begin{proof}
%\end{proof}

%\begin{proposition}
%\label{10giugno}
%Assume that $(\Lambda_1,\Lambda_2,\Lambda_3)$ has order $n\ge 4$. If each of the involutory %homologies  defined in Proposition \ref{trihom} preserve $(\Sigma_1,\Sigma_2,\Sigma_3)$ then %$(\Sigma_1,\Sigma_2,\Sigma_3)$ is also triangular.
%\end{proposition}
%\begin{proof}
%\end{proof}

We prove another useful result.
\begin{proposition}
\label{triangular1} If $(\Gamma_1,\Gamma_2,\Gamma_3)$ and $(\Sigma_1,\Sigma_2,\Sigma_3)$ are triangular dual 3-nets such that $\Gamma_1=\Sigma_1$, then the associated triangles share the vertices on their common side.
\end{proposition}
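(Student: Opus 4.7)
The plan is first to identify a common side of the two triangles and then to exhibit, on that line, two cyclic projectivity groups of order $n$ which necessarily coincide, thereby forcing the vertices to agree. Since $\Gamma_1=\Sigma_1$ consists of $n\ge 4$ collinear points, the side of the first triangle containing $\Gamma_1$ and the side of the second containing $\Sigma_1$ must be the same line $\ell$. Let $V_1,V_2$ and $W_1,W_2$ denote the vertices on $\ell$ of the respective associated triangles; the goal is to prove $\{V_1,V_2\}=\{W_1,W_2\}$.

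By Proposition~\ref{triangular} together with Remark~\ref{rem1}, $\Gamma_1$ is an orbit on $\ell$ of a cyclic projectivity group $\tilde H\le\mathrm{PGL}_2(\ell)$ of order $n$ fixing $V_1$ and $V_2$ pointwise; applying the same result to the second triangular $3$-net yields a cyclic group $\tilde H'$ of order $n$ fixing $W_1,W_2$ pointwise, with $\Sigma_1=\Gamma_1$ as an orbit. Since $p\nmid n$, the one-parameter group of projectivities fixing any prescribed pair of points of $\ell$ pointwise contains a unique cyclic subgroup of order $n$, so $\tilde H$ is determined by $\{V_1,V_2\}$ and $\tilde H'$ by $\{W_1,W_2\}$; hence it suffices to prove $\tilde H=\tilde H'$. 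Both groups lie in the stabilizer $S\le\mathrm{PGL}_2(\ell)$ of the set $\Gamma_1$, and since a nontrivial element of $\mathrm{PGL}_2(\ell)$ fixes at most two points while $|\Gamma_1|\ge 4$, the action of $S$ on $\Gamma_1$ is faithful. Thus $|S|$ divides $n!$ and, because $p>n$ (or $p=0$), is coprime to $p$. By the classical Dickson-type classification, every finite $p'$-subgroup of $\mathrm{PGL}_2(\mathbb K)$ is cyclic, dihedral, $\mathrm{Alt}_4$, $\mathrm{Sym}_4$, or $\mathrm{Alt}_5$. In the cyclic and dihedral cases, $S$ contains a unique subgroup of order $n$ (in the dihedral case, the unique index-two cyclic subgroup has a unique subgroup of every order dividing its order), so $\tilde H=\tilde H'$ and the proposition follows.

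The main obstacle is therefore ruling out the three exceptional groups. For $n\ge 6$ this is automatic since $\mathrm{Alt}_4,\mathrm{Sym}_4,\mathrm{Alt}_5$ have maximal element orders $3,4,5$ respectively and hence cannot contain a cyclic subgroup of order $n$. For $n\in\{4,5\}$ I intend to use the orbit structure of these groups on $\mathbb P^1(\mathbb K)$: the octahedral $\mathrm{Sym}_4$ has orbit sizes $6,8,12,24$; the icosahedral $\mathrm{Alt}_5$ has orbit sizes $12,20,30$; and the tetrahedral $\mathrm{Alt}_4$ has orbit sizes $4,6,12$ but contains no element of order $4$. Consequently, neither $\mathrm{Sym}_4$ nor $\mathrm{Alt}_5$ can preserve a set of size $4$ or $5$, while $\mathrm{Alt}_4$ cannot contain the element of order $n\in\{4,5\}$ required for $\tilde H\le S$. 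Thus $S$ must be cyclic or dihedral in every case, yielding $\tilde H=\tilde H'$ and hence $\{V_1,V_2\}=\{W_1,W_2\}$.
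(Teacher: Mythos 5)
Your proof is correct and follows essentially the same route as the paper's: reduce the claim to showing that the two cyclic projectivity groups of order $n$ coincide inside a finite subgroup of $PGL(2,\mathbb{K})$ acting on the common line, then invoke Dickson's classification to force that subgroup to be cyclic or dihedral, where a cyclic subgroup of order $n\geq 3$ is unique. Your one genuine streamlining is to work with the full setwise stabilizer of $\Gamma_1$ and observe that it embeds faithfully in $\mathrm{Sym}_n$, hence is a $p'$-group since $p>n$; this disposes of the characteristic-$p$ entries of Dickson's list (elementary abelian groups, their semidirect extensions, $PSL(2,q)$ and $PGL(2,q)$) in one stroke, whereas the paper eliminates them case by case via orbit sizes.
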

\begin{proof} From Remark \ref{rem1}, $\Gamma_1$ is the orbit of a cyclic projectivity group $H_1$ of the line $\ell$ containing $\Gamma_1$ while the two fixed points of $H_1$ on $\ell$, say $P_1$ and $P_2$, are vertices of the triangle containing $\Gamma_1,\Gamma_2,\Gamma_3$.

The same holds for $\Sigma_1$ with a cyclic projectivity group $H_2$, and fixed points $Q_1,Q_2$. From $\Gamma_1=\Sigma_1$, the projectivity group $H$ of the line $\ell$ generated by $H_1$ and $H_2$ preserves $\Gamma_1.$ Let $M$ be the projectivity group generated by $H_1$ and $H_2$.

Observe that $M$ is a finite group since it has an orbit of finite size $n\geq 3$. Clearly, $|M|\geq n$ and equality holds if and only if $H_1=H_2$. If this is the case, then $\{P_1,P_2\}=\{Q_1,Q_2\}$. Therefore, for the purpose of the proof, we may assume on the contrary that
$H_1\neq H_2$ and $|M|>n$.

Now, Dickson's classification of finite subgroups of $PGL(2,{\mathbb{K}})$ applies to $M$. From that classification, $M$ is one of the nine subgroups listed as ($(1),\ldots, (9)$ in  \cite[Theorem 1]{mv1983} where $e$ denotes the order of the stabilizer $M_P$ of a point $P$ in a short $M$-orbit, that is, an $M$-orbit of size smaller than $M$. Observe that such an $M$-orbit has size $|M|/e$. There exist a finitely many short $M$-orbits, and $\Sigma_1$ is one of them. It may be that an $M$-orbit is trivial as it consists of just one point.

Obviously, $M$ is neither cyclic nor dihedral as it contains two distinct cyclic subgroups of the same order $n\geq 3$.

Also, $M$ is not an elementary abelian $p$-group $E$ of rank $\geq 2$, otherwise we would have $|E|=|M|>n$ since the minimum size of a non-trivial $E$-orbit is $|E|$, see (2) in \cite[Theorem 1]{mv1983}.

{}From (5) in \cite[Theorem 1]{mv1983} with $p\neq 2,3$, the possible sizes of a short ${\rm{Alt_4}}$-orbit are $4,6$ each larger than $3$. On the other hand, ${\rm{Alt_4}}$ has no element of order larger than $3$. Therefore, $M\not\cong {\rm{Alt_4}}$ for $p\neq 2,3$.

Similarly, from (5) in \cite[Theorem 1]{mv1983} with $p\neq 2,3$, the possible sizes of a short ${\rm{Sym_4}}$-orbit are $6,8,12$ each larger than $4$. Since ${\rm{Sym_4}}$ has no element of order larger than $4$. Therefore, $M\not\cong {\rm{Sym_4}}$ for $p\neq 2,3$.

Again, from (6) in \cite[Theorem 1]{mv1983} with $p\neq 2,5$, the  possible sizes of a short ${\rm{Alt_5}}$-orbit are $10,12$ for $p=3$ while
$12,20,30$ for $p\neq 2,3,5$. Each size exceeds $5$. On the other hand ${\rm{Alt_5}}$ has no element of order larger than $5$. Therefore, $M\not\cong {\rm{Alt_5}}$ for $p\neq 2,5$.

The group $M$ might be isomorphic to a subgroup $L$ of order $qk$ with $k|(q-1)$ and $q=p^h$, $h\geq 1$. Here $L$ is the semidirect product of the unique (elementary abelian) Sylow $p$-subgroup of $L$ by a cyclic subgroup of order $k$. No element in $L$ has order larger than $k$ when $h>1$ and $p$ when $h=1$. From (7) in \cite[Theorem 1]{mv1983}, any non-trivial short $L$-orbit has size $q$. Therefore $M\cong L$ implies that $h=1$ and $n=p$. But this is inconsistent with the hypothesis $p>n$.

Finally, $M$ might be isomorphic to a subgroup $L$ such that either $L=PSL(2,q)$ or $L=PGL(2,q)$ with $q=p^h$, $h\geq 1$. No element in $L$ has order larger than $q+1$. From (7) and (8) in \cite[Theorem 1]{mv1983}, any short $L$-orbit has size either $q+1$ or $q(q-1)$. For $q\geq 3$, if $M\cong L$ occurs then $n=q+1\geq p+1$, a contradiction with the hypothesis $p>n$. For $q=2$, we have that $|L|=6$ which is smaller than $12$. Therefore $M\not\cong L$.

No possibility has arisen for $M$. Therefore $\{P_1,P_2\}=\{Q_1,Q_2\}.$
\end{proof}
\subsection{Conic-line type dual 3-nets}
An algebraic dual $3$-net $(\Lambda_1,\Lambda_2,\Lambda_3)$ is of {\em{conic-line type}} if two of its three components lie on an irreducible conic $\cC$ and the third one lies on a line $\ell$. All such $3$-nets realize groups and they can be described using subgroups of the projectivity group $PGL(2,\mathbb{K})$ of $\cC$. For this purpose, some basic results on subgroups and involutions in $PGL(2,\mathbb{K})$ are useful which essentially depend on the fact that every involution in $PGL(2,\mathbb{K})$ is a perspectivity whose center is a point outside $\cC$ and axis is the pole of the center with respect to the orthogonal polarity arising from $\cC$. We begin with an example.
\begin{example}
\label{exconicline}
Take any cyclic subgroup $C_n$ of $PGL(2,\mathbb{K})$ of order $n\geq 3$ with $n\neq p$ that preserves $\cC$. Let $D_n$ be the unique dihedral subgroup of $PGL(2,\mathbb{K})$ containing $C_n$. If $j$ is the (only) involution in $\cZ(D_n)$ and $\ell$ is its axis, then the centers of the other involutions in $D_n$ lie on $\ell$. We have $n$ involutions in $D_n$ other than $j$, and the set of the their centers is taken for $\Lambda_1$.
%Now, suppose that
%\begin{itemize}
%\item[(i)] either $\mathbb{K}$ is infinite or ${\mathbb{K}={\mathbb{K}}}_q$ with $n<q-1$.
%\end{itemize}
Take a $C_n$-orbit $\cO$ on $\cC$ such that the tangent to $\cC$ at any point in $\cO$ is disjoint from $\Lambda_1$; equivalently, the $D_n$-orbit $\cQ$ be larger than $\cO$. Then $\cQ$ is the union of $\cO$ together with another $C_n$-orbit. Take these two $C_n$-orbits for $\Lambda_2$ and $\Lambda_3$ respectively. Then $(\Lambda_1,\Lambda_2,\Lambda_3)$ is a conic-line dual $3$-net which realizes $C_n$. It may be observed that $\ell$ is a chord of $\cC$ and the multiplicative group of $\mathbb{K}$ has a subgroup of order $n$. %otherwise $\ell$ is an external line to $\cC$.
%In the latter case, $\ell$ is a chord of the conic $\cC'$, extension of $\cC$ in $PG(2,\bf L)$ where $\bf L$ %is a quadratic extension of $\mathbb{K}$.
\end{example}

The cyclic subgroups $C_n$ form a unique conjugacy class in $PGL(2,\mathbb{K})$.
%modifica p>n non esiste! 18 aprile 2011
%if $PGL(2,\mathbb{K})$ has
For a cyclic subgroup $C_n$ of $PGL(2,\mathbb{K})$ of order $n$,
%and (i),
the above construction provides a unique example of a dual $3$-net realizing $C_n$.
%Using the classification of finite subgroups of $PGL(2,\mathbb{K})$ as in the proof of \cite[Theorem 6.1]{bkm}, the following result  can be %proven.
\begin{proposition}
\label{cltype}  Up to projectivities, the conic-line dual $3$-nets of order $n$ are those described in Example  {\em{\ref{exconicline}}}.
\end{proposition}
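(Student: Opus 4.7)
Let $(\Lambda_1,\Lambda_2,\Lambda_3)$ be a conic-line dual $3$-net of order $n$, with $\Lambda_2,\Lambda_3\subset\cC$ on an irreducible conic and $\Lambda_1\subset\ell$ on a line, and let $\alpha,\beta,\gamma$ be the bijections from the realized group $G$ to the three components. The plan is to embed $G$ into $PGL(2,\mathbb{K})$ acting on $\cC$, deduce that $G$ must be cyclic, and then identify the geometric data with Example \ref{exconicline}.

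First, $\Lambda_1\cap\cC=\emptyset$: if some $\alpha(a)$ lay on $\cC$, the collinear triple $\alpha(a),\beta(b),\gamma(ab)$ would yield three distinct collinear points of an irreducible conic, impossible. Hence, for each $a \in G$, the harmonic homology $\iota_a\in PGL(2,\mathbb{K})$ with center $\alpha(a)$ and axis the $\cC$-polar of $\alpha(a)$ is a well-defined involution stabilizing $\cC$, with $\iota_a(\beta(b))=\gamma(ab)$ and $\iota_a(\gamma(c))=\beta(a^{-1}c)$. Setting $\sigma_a:=\iota_a\iota_e$, one computes $\sigma_a(\gamma(c))=\gamma(ac)$, so $\sigma_a$ realizes left translation by $a$ on $\Lambda_3$. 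Since $|\Lambda_3|=n\geq 4$ exceeds the maximum number of fixed points on $\cC$ of a non-identity element of $PGL(2,\mathbb{K})$, the map $a\mapsto\sigma_a$ is an injective homomorphism $G\hookrightarrow PGL(2,\mathbb{K})$. The identity $\iota_e\sigma_a\iota_e=\iota_e\iota_a=\sigma_a^{-1}$ shows that conjugation by $\iota_e$ induces the inversion automorphism on the image of $G$, so $G$ must be abelian.

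By Dickson's classification of finite subgroups of $PGL(2,\mathbb{K})$ (invoked in the proof of Proposition \ref{triangular1}), any finite abelian subgroup of order coprime to $p$ is either cyclic or the Klein four group $C_2\times C_2$. To exclude the latter, suppose $G=\{e,a,b,c\}$ with each non-identity element of order $2$. Then every $\sigma_x$ is an involution, which forces $\iota_x$ to commute with $\iota_e$ for each $x\in\{a,b,c\}$. The four distinct pairwise-commuting involutions $\iota_e,\iota_a,\iota_b,\iota_c$ would then generate a common abelian subgroup of $PGL(2,\mathbb{K})$; but such an abelian subgroup contains at most three involutions (cyclic groups contain at most one, and Klein four contains three), a contradiction. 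Hence $G\cong C_n$.

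The cyclic subgroup $G\subset PGL(2,\mathbb{K})$ has exactly two fixed points $A,B$ on $\cC$, and $D_n:=\langle G,\iota_e\rangle$ is the dihedral normalizer of order $2n$. Each $\iota_a$ belongs to $D_n\setminus G$, hence swaps $A$ with $B$, so its center $\alpha(a)$ lies on the chord $AB$; since all $\alpha(a)$ lie on $\ell$, this forces $\ell=AB$. The components $\Lambda_2=\beta(G)$ and $\Lambda_3=\gamma(G)$ are the two $C_n$-orbits making up the single $D_n$-orbit $\Lambda_2\cup\Lambda_3$, and the tangent condition of Example \ref{exconicline} holds automatically: if the tangent to $\cC$ at $\beta(b)$ met $\ell$ at $\alpha(a)$, then $\iota_a$ would fix $\beta(b)$, yielding $\gamma(ab)=\beta(b)$ against $\Lambda_2\cap\Lambda_3=\emptyset$. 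Uniqueness up to projectivity follows from the conjugacy of cyclic subgroups of $PGL(2,\mathbb{K})$ of order $n$ recorded in the paper. The most delicate step is the Klein-four exclusion, which requires both the commutation observation extracted from $\sigma_x$ being an involution and the structural fact that an abelian subgroup of $PGL(2,\mathbb{K})$ contains at most three involutions.
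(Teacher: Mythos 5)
Your proof is correct and follows essentially the same route as the paper's: both arguments attach to each point of the line-component the involutory homology preserving $\cC$, observe that these involutions swap the two conic-components and generate a dihedral group of order $2n$ whose index-two cyclic subgroup has the conic-components as its orbits, and then invoke Dickson's classification of finite subgroups of $PGL(2,\mathbb{K})$ together with conjugacy of the relevant subgroups to get uniqueness up to projectivity. The only difference is bookkeeping: the paper closes up the group $\Psi$ generated by all the homologies by noting that a product of three of them is again an involutory homology with center in the line-component, whereas you build the translation representation $\sigma_a=\iota_a\iota_e$, deduce commutativity from the inversion automorphism, and exclude the Klein four group separately --- both are sound.
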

\begin{proof} Let $(\Lambda_1,\Lambda_2,\Lambda_3)$ be a conic-line dual $3$-net of order $n$ such that $\Lambda_1\cup\Lambda_2$ is contained in an irreducible conic $\cC$ while $\Lambda_3$ lies on a line $\ell$. For every point $P\in \ell$ with $P\not\in \cC,$ let $\varphi_P$ be the (unique) involutory homology with center $P$ that preserves $\cC.$ The homologies $\varphi_P$ with $P$ ranging over $\Lambda_3$ generate a group $\Psi$ preserving $\cC$. Hence  $\Phi$ is isomorphic to a proper subgroup of $PGL(2,\mathbb{K})$. For any three involutory homologies with centers on a line $\ell$ which preserve an irreducible conic, their product is also an involutory homology with center on $\ell$. In our case, the latter center is also a point in $\Lambda_3$, as  $(\Lambda_1,\Lambda_2,\Lambda_3)$ is a dual $3$-net.
Therefore, $\Psi$ has order $2n$ and it contains an abelian subgroup $\Phi$ of index $2$. From the classification of finite subgroups of $PGL(2,\mathbb{K})$, $\Psi$ is a dihedral group. For  any point $R\in \Lambda_1$, the $\Psi$-orbit of $R$ is $\Lambda_1\cup \Lambda_2$ so that both $\Lambda_1$ and $\Lambda_2$ are $\Phi$-orbits. Here the tangent to $\cC$ at $R$ is disjoint from $\Lambda_3$ otherwise the $\Lambda_1\cup\Lambda_2$ would be of length smaller than $2n$. Therefore, $(\Lambda_1,\Lambda_2,\Lambda_3)$ is a conic-line dual $3$-net given in Example \ref{exconicline}. Projective equivalence follows from the fact that dihedral groups of $PGL(2,q)$ of the same order are conjugate in $PGL(2,q)$. Since $p>n$ is assumed when $p>0$, the order $\Phi$ is not $p$ and hence $\ell$ is not a tangent to $\cC$.
\end{proof}
A corollary of this is the following result.
\begin{proposition}
\label{cltype2} A conic-line dual $3$-net realizes a cyclic group $C_n$.
\end{proposition}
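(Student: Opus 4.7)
The plan is to invoke Proposition~\ref{cltype} to reduce to the normal form of Example~\ref{exconicline}, and then verify by an explicit parametrization using the dihedral group action that the resulting net realizes $C_n$. Since projectivities preserve the group realized by a dual $3$-net, it will suffice to exhibit a realization by $C_n$ of the specific $(\Lambda_1,\Lambda_2,\Lambda_3)$ constructed in that example.

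Retaining the notation of Example~\ref{exconicline}, I would take $D_n=\langle r,s\rangle\le PGL(2,\mathbb{K})$ to be the dihedral group preserving the conic $\cC$, with $r$ of order $n$ generating $C_n$ and $s$ a non-central involution. The $n$ non-central involutions of $D_n$ are then $\{sr^i:0\le i<n\}$; writing $P_i$ for the center of $sr^i$, we have $\Lambda_1=\{P_0,\dots,P_{n-1}\}$. Next I would fix a base point $Q_0\in\Lambda_2$ and set $Q_j=r^jQ_0\in\Lambda_2$ and $R_k=sr^kQ_0\in\Lambda_3$. Because $C_n$ acts regularly on each of the two $C_n$-orbits $\Lambda_2$ and $\Lambda_3$, these give bijective parametrizations by $\mathbb{Z}/n\mathbb{Z}$.

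The core step is the elementary fact that for any involutory homology $\sigma$ with center $P$, the three points $P,X,\sigma(X)$ are collinear for every $X$. Applying this with $\sigma=sr^a$ and $X=Q_b$ yields $\sigma(Q_b)=sr^{a+b}Q_0=R_{a+b}$, so that $P_a,Q_b,R_{a+b}$ are collinear. As $(a,b)$ ranges over $(\mathbb{Z}/n\mathbb{Z})^2$ this produces $n^2$ collinear triples, which is already the count forced by the dual $3$-net axiom, hence these are all of them. Defining $\alpha(a)=P_a$, $\beta(b)=Q_b$, $\gamma(c)=R_c$ gives bijections $C_n\to\Lambda_i$ for which $\alpha(a),\beta(b),\gamma(c)$ are collinear if and only if $a+b=c$, which is exactly the definition of realizing the cyclic group $C_n=\mathbb{Z}/n\mathbb{Z}$.

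The only potential obstacle is to verify that the $P_i$ are pairwise distinct and exhaust $\Lambda_1$ under this indexing; this is routine, since an involutory homology is determined by its center and the $n$ non-central involutions of $D_n$ are pairwise distinct. Beyond this bookkeeping, the argument is immediate from the structural reduction of Proposition~\ref{cltype} combined with the regular action of $C_n$, so no deeper difficulty will arise.
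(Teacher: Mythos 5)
Your argument is correct and follows essentially the same route as the paper: Proposition \ref{cltype2} is presented there as an immediate corollary of Proposition \ref{cltype} together with the assertion in Example \ref{exconicline} that the construction realizes $C_n$. Your explicit parametrization $P_a,Q_b,R_{a+b}$, based on the collinearity of $P$, $X$, $\sigma(X)$ for an involutory homology $\sigma$ with center $P$, merely supplies the verification of that assertion which the paper leaves implicit.
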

%The result below can be proven with an argument similar to that used in the proof of Proposition \ref{triangular1}.
A final useful result is given in the following proposition.
\begin{proposition}
\label{cltype1} Let $(\Gamma_1,\Gamma_2,\Gamma_3)$ and $(\Delta_1,\Delta_2,\Delta_3)$ be two conic-line type dual 3-nets where $\Gamma_3$ lies on the line $\ell$ and $\Delta_3$ lies on the line $s$. If $n\geq 5$ and
$\Gamma_1=\Delta_1$ then $\ell=s$.
\end{proposition}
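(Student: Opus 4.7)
The plan is to reduce the assertion $\ell=s$ to the coincidence of a single conic and a single cyclic group governing both 3-nets, and then to identify $\ell$ from that data alone.

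First, since each conic-line 3-net has two components on an irreducible conic, $\Gamma_1\cup\Gamma_2$ sits on a conic $\cC$ and $\Delta_1\cup\Delta_2$ sits on a conic $\cC'$. Both conics contain the $n\geq 5$ points of $\Gamma_1=\Delta_1$, and two distinct conics meet in at most four points by B\'ezout; hence $\cC=\cC'$.

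Next, by Proposition~\ref{cltype} each 3-net arises as in Example~\ref{exconicline}, so there are cyclic subgroups $C_n,C_n'\leq PGL(2,\mathbb{K})$ preserving $\cC$ such that $\Gamma_1$ is a $C_n$-orbit and $\Delta_1$ is a $C_n'$-orbit. I would set $H=\langle C_n,C_n'\rangle$ and observe that $H$ acts faithfully on $\Gamma_1$, since three distinct points of $\cC\cong\mathbb{P}^1$ have trivial stabiliser in $PGL(2,\mathbb{K})$; thus $H$ is finite, and $\Gamma_1$ is an $H$-orbit of size $n$. If $C_n\ne C_n'$ then $|H|>n$ and $H$ contains two distinct cyclic subgroups of order $n\geq 5$. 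The principal obstacle is then to run through Dickson's classification of finite subgroups of $PGL(2,\mathbb{K})$, paralleling the argument in the proof of Proposition~\ref{triangular1}, and rule out each admissible type under the hypotheses $n\geq 5$ and $p=0$ or $p>n$: cyclic and dihedral groups contain only one cyclic subgroup of a given order $\geq 3$; an elementary abelian $p$-group is excluded by $p>n$; $\mathrm{Alt}_4$ and $\mathrm{Sym}_4$ have no element of order $\geq 5$; $\mathrm{Alt}_5$ has no orbit of size $5$ on $\mathbb{P}^1$ (its short orbits have sizes $12,20,30$ when $p\geq 7$) and no element of order $\geq 6$; the semidirect-product type produces non-trivial orbits of size $q=p^h>n$; and $PSL(2,q),PGL(2,q)$ would force $n\geq q+1\geq p+1$, contradicting $p>n$. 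This forces $C_n=C_n'$.

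Finally, let $F_1,F_2$ be the two fixed points of $C_n$ on $\cC$. Because $C_n$ preserves every component of its 3-net, it preserves $\Gamma_3\subset\ell$; as $|\Gamma_3|=n\geq 3$, the line $\ell$ is itself $C_n$-invariant. By Proposition~\ref{cltype}, $\ell$ is not tangent to $\cC$, so $\ell\cap\cC=\{P_1,P_2\}$ with $P_1\ne P_2$, and $C_n$ permutes this pair. If a generator $r$ of $C_n$ swapped $P_1$ and $P_2$, the non-trivial subgroup $\langle r^2\rangle$ would still fix both, which forces $\{P_1,P_2\}=\{F_1,F_2\}$ and contradicts the swap (since $C_n$ fixes $F_1,F_2$). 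Hence $\{P_1,P_2\}=\{F_1,F_2\}$ and $\ell=F_1F_2$. Applying the same identification to the second 3-net, using $C_n'=C_n$, yields $s=F_1F_2$, so $\ell=s$.
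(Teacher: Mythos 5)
Your proof is correct and follows essentially the same route as the paper: identify the common conic through $\Gamma_1=\Delta_1$, extract the two cyclic groups from Proposition~\ref{cltype}, force them to coincide via the classification of finite subgroups of $PGL(2,\mathbb{K})$ acting on the shared orbit, and conclude from the uniqueness of the invariant non-tangent line. The only difference is one of detail: you spell out the Dickson case analysis (which the paper delegates to the argument of Proposition~\ref{triangular1}) and you prove the final claim that the unique invariant chord is the line through the two fixed points, which the paper merely asserts.
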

\begin{proof} Let $\cC$ be the unique irreducible conic through $\Gamma_1$. Replace $(\Lambda_1,\Lambda_2,\Lambda_3)$ by $(\Gamma_1,\Gamma_2,\Gamma_3)$ and subsequently by $(\Gamma_1,\Delta_2,\Delta_3$ in the proof of Proposition \ref{cltype}. Two cyclic groups $\Phi_1$ and $\Phi_2$ are obtained which shear a point-orbit $\Gamma_1$. The subgroup $\Phi$ of $PGL(2,\mathbb{K})$ generated by $\Phi_1$ and $\Phi_2$ also preserves $\Gamma_1$ and hence it is finite. Since $|\Gamma_1|\geq 5$, the classification of finite subgroups of $PGL(2,\mathbb{K})$ yields that $\Psi_1=\Psi_2$. Since any cyclic subgroup of $PGL(2,\mathbb{K})$ of order $\geq 5$ preserves only one non-tangent line, and $\Psi_1$ preserves $\ell$ while $\Psi_2$ does $s$, the assertion follows.
\end{proof}

\subsection{Tetrahedron type dual 3-nets}
In $PG(2,\mathbb{K})$, any non-degenerate quadrangle with its six sides (included the two diagonals)
 may be viewed as the projection of a tetrahedron of $PG(3,\mathbb{K})$. This suggests to call two sides of the quadrangle {\em{opposite}}, if they do not have any common vertex. With this definition, the six sides of the quadrangle  are partitioned into three couples of opposite sides. Let $(\Lambda_1,\Lambda_2,\Lambda_3)$  be a dual $3$-net of order $2n$ containing a dual $3$-subnet
\begin{equation}
\label{subnet} (\Gamma_1,\Gamma_2,\Gamma_3)
\end{equation}
of order $n$. Observe that $(\Lambda_1,\Lambda_2,\Lambda_3)$ contains three more dual $3$-subnets of order $n$. In fact,
for $\Delta_i=\Lambda_i\setminus \Gamma_i$, each of the triples below defines such a subnet:
\begin{equation}
\label{subnets} (\Gamma_1,\Delta_2,\Delta_3),\,(\Delta_1,\Gamma_2,\Delta_3),\,(\Delta_1,\Delta_2,\Gamma_3).
\end{equation}
Now, the dual 3-net $(\Lambda_1,\Lambda_2,\Lambda_3)$ is said to be {\em{tetrahedron-type}} if its components lie on the sides of a non-degenerate quadrangle such that $\Gamma_i$ and $\Delta_i$ are contained in opposite sides, for  $i=1,2,3$. Such a non-degenerate quadrangle is said to be {\em{associated}} to $(\Lambda_1,\Lambda_2,\Lambda_3)$.
Observe that each of the six sides of the quadrangle contains exactly one of the point-sets $\Gamma_i$ and $\Delta_i$. Moreover, each of the four dual $3$-subnets listed in (\ref{subnet}) and (\ref{subnets}) is triangular as each of its components, called a {\em{half-set}}, lies on a side of a triangle whose vertices are also vertices of the quadrangle. Therefore there are six half-sets in any dual $3$-net of tetrahedron type.
\begin{proposition}
\label{tetra} Any tetrahedron-type dual $3$-net realizes a dihedral group.
\end{proposition}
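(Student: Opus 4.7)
The plan rests on the four triangular dual 3-subnets appearing in (\ref{subnet}) and (\ref{subnets}), each of which, by Proposition \ref{triangular}, realizes a cyclic group of order $n$. My goal is to glue these four cyclic structures into a group of order $2n$ realized by $(\Lambda_1,\Lambda_2,\Lambda_3)$ and then identify it as dihedral.

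First I would place the associated quadrangle in standard position, with vertices $A=(1,0,0)$, $B=(0,1,0)$, $C=(0,0,1)$, $D=(1,1,1)$, and distribute the six half-sets onto the six sides in the essentially unique way that makes the four sub-nets correspond to the four inscribed triangles $ABC$, $ABD$, $ACD$, $BCD$. By Remark \ref{rem1}, each half-set is an orbit of the unique cyclic subgroup of $PGL(2,\mathbb{K})$ of order $n$ fixing the two vertices of the quadrangle on its supporting line; hence, after the standard affine parametrization, every half-set becomes a coset of the group $\mu_n$ of $n$-th roots of unity in $\mathbb{K}^*$. Proposition \ref{triangular1}, applied to each pair of triangular sub-nets sharing a half-set, guarantees that both sub-nets prescribe the same cyclic group on the common line, so this cyclic parametrization is intrinsic. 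Reading off the collinearity condition from the line determinant in each inscribed triangle then yields four multiplicative relations among the parameters; together with the sharing provided by Proposition \ref{triangular1}, they produce the coset pattern $\Gamma\cdot\Gamma=\Gamma$, $\Gamma\cdot\Delta=\Delta$, $\Delta\cdot\Gamma=\Delta$, $\Delta\cdot\Delta=\Gamma$, which is exactly that of an order-$2n$ group $G$ with cyclic subgroup $H\cong\mu_n$ of index $2$, where $\Gamma_i$ corresponds to $H$ and $\Delta_i$ to the non-trivial coset.

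Finally, to identify the extension as dihedral rather than $C_n\times C_2$, I would compute the conjugation action. Using the parametrization and the sub-net $(\Delta_1,\Gamma_2,\Delta_3)$ for the multiplication between $\Delta_1$ and $\Gamma_2$, the conjugation of $h\in H$ by any coset representative $r\in\Delta_1$ unfolds as a M\"obius change of parameter on the line carrying $\Gamma_1$ whose effect on the orbit $\mu_n$ is the inversion $h\mapsto h^{-1}$; this is precisely the relation $rhr^{-1}=h^{-1}$ defining $D_n$, so $G\cong D_n$. The main obstacle is this last computation: one has to track the inversion of the multiplicative parameter through the chain of M\"obius coordinate changes on the four lines carrying $\Delta_1$, $\Gamma_2$ and $\Delta_3$ and their intersections with the quadrangle vertices, verifying that the compatibility constraints already fixed in the previous step force exactly the sign needed to produce inversion rather than a trivial automorphism of $\mu_n$; once that sign is confirmed, the dihedral presentation and the conclusion $G\cong D_n$ are immediate.
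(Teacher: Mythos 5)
Your proposal follows essentially the same route as the paper: normalize the associated quadrangle, use Remark \ref{rem1} and Proposition \ref{triangular1} to write all six half-sets as cosets of the cyclic group $H$ of order $n$ acting on the supporting lines, and then identify the order-$2n$ extension by an explicit coordinate computation. The only real difference is in the last step: where you propose to track a sign through M\"obius changes to see that conjugation inverts $H$, the paper computes directly that $\sigma*\sigma=1$ for every $\sigma$ in the non-trivial coset (an equivalent statement), and it also disposes of what you call the ``main obstacle'' by lifting the configuration to the fundamental tetrahedron of $PG(3,\mathbb{K})$, where the six edges minus the vertices form an infinite dual $3$-net realizing the generalized dihedral group $2.\mathbb{K}^*$; the given net then realizes a non-cyclic subgroup of that group with a cyclic subgroup of index $2$, hence a dihedral group, which settles both the sign and the (otherwise unaddressed) question of why the loop is a group at all.
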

\begin{proof} The associated quadrangle is assumed to be the fundamental quadrangle of the homogeneous coordinate system in $PG(2,\mathbb{K})$, so that its vertices are $O,$ $X_\infty,Y_\infty$ together with the unity point $E=(1,1,1)$. By definition, the subnet (\ref{subnet}) is triangular. Without loss of generality,
$$\Gamma_1=\{(\xi,0,1)|\xi\in L_1\},\,\Gamma_2=\{(0,\eta,1)|\eta\in L_2\},\,\Gamma_3=\{(1,-\zeta,0)|\zeta\in L_3\}$$
where $L_1=aH,L_2=bH,L_3=cH$ are cosets of $H$ with $ac=b$, see Remark \ref{rem1}. We fix such triple $\{a,b,c\}$. Observe that
$(a,0,1)\in \Gamma_1$, $(0,b,1)\in \Gamma_2$ and $(1,-c,0)\in \Gamma_3$.
Furthermore,
$$\Delta_1=\{(1,\alpha,1)|\alpha\in M_1\},\,\Delta_2=\{(\beta,1,1)|\beta\in M_2\},\,\Delta_3=\{(1,1,\gamma)|\gamma\in M_3\}$$
with $M_1,M_2$ and $M_3$ subsets of ${\mathbb{K}}\setminus \{0,1\}$, each of size $n$.

The projectivity $\varphi_3$  with matrix representation
$$
\left(
  \begin{array}{ccc}
    1 & 0 & -1 \\
    0 & 1 & -1 \\
    0 & 0 & 1 \\
  \end{array}
\right)
$$
fixes the line $X_\infty Y_\infty$ pointwise and maps the subnet $(\Delta_1,\Delta_2,\Gamma_3)$ into the triangular dual $3$-net $(\Delta_1',\Delta_2',\Gamma_3)$ with associated triangle $OX_\infty Y_\infty$. Here
$$\Delta_1'=\{(0,\beta-1,1)|\beta\in M_1\},\,\,\Delta_2'=\{(\alpha-1,0,1)|\gamma\in M_3\}.$$
{}From Remark \ref{rem1}, the elements $\beta-1$ with $\beta$ ranging over $M_2$ form a coset of $H$, and the same holds for $M_1$ and $M_2$.
Therefore, there exist $d,e\in \mathbb{K}^*$ such that
$$\Delta_1'=\{(0,\sigma,1)|\sigma\in dH\},\,\,\Delta_2'=\{(\sigma,0,1)|\sigma\in eH\},$$
and $ec=d$. Applying $\varphi_3^{-1}$ gives
$$\Delta_1=\{(1,\sigma+1,1)|\sigma\in dH\},\,\,\Delta_2=\{(\sigma+1,1,1)|\sigma\in eH\}.$$

Repeating the above argument for the projectivity $\varphi_2$  with matrix representation
$$
\left(
  \begin{array}{ccc}
    1 & 0 & 0 \\
    -1 & 1 & 0 \\
    -1 & 0 & 1 \\
  \end{array}
\right)
$$
shows  that $\varphi_2$ maps the subnet $(\Gamma_1,\Delta_2,\Delta_3)$ into
a triangular dual $3$-net $(\Delta_1'',\Gamma_2,\Delta_3')$ with associated triangle $OX_\infty Y_\infty$. Here
$$\Delta_1''=\{(1,\alpha-1,0)|\alpha\in M_1\},\,\,\Delta_3'=\{(\frac{1}{\gamma-1},0,1)|\gamma\in M_3\}.$$ Again, from Remark \ref{rem1}, there exist $f\in \mathbb{K}^*$ such that
$$\Delta_3'=\{(\sigma,0,1)|\sigma\in fH\},$$
and $-df=b$. Applying $\varphi_2^{-1}$ gives
$$\Delta_3=\{(1,1,\frac{\sigma+1}{\sigma})|\sigma\in fH\}.$$

Now, the homogeneous coordinate system is modified a little, by changing $E$ to the new unity point $E'=(a^{-1},b^{-1},1)$. With this change,
$$
\begin{array}{lll}
\Gamma_1=\{(\xi,0,1)|\xi\in H\},\, \Gamma_2=\{(0,\eta,1)|\eta\in H\},\,
\Gamma_3=\{(1,-\zeta,0)|\zeta\in H\},\\
\Delta_1=\{(a^{-1},b^{-1}(\sigma+1),1)|\sigma\in dH\},\,\Delta_2=\{(a^{-1}(\sigma+1),b^{-1},1)\sigma\in eH\},\\
\Delta_3=\{(a^{-1},b^{-1},\frac{\sigma+1}{\sigma})|\sigma\in fH\}.
\end{array}
$$
The points $E_1=(1,0,1)$, $E_2=(0,1,1)$, and $E_3=(1,-1,1)$ are collinear, and the triple $(E_1,E_2,E_3)$ may be chosen to represent the
unity element of the loop $(L,*)$ arising from the above tetrahedron-type dual $3$-net $(\Lambda_1,\Lambda_2,\lambda_3)$. The points in $\Lambda_2$
may be labeled with $\eta\in H$ when the point $A_2=(0,\eta,1)$ is in $\Gamma_1$, and with $\sigma\in eH$ when $A_2=(a^{-1}(\sigma+1),b^{-1},1)$ is in $\Delta_1$. Since $\{E_1A_2,A_3\}$ and $\{A_1,E_2,E_3\}$ are both collinear triples of points, in the former case $$A_1=(\eta^{-1},1,0),\,\,A_2=(0,\eta,1),\,\,A_3=(1-\eta,0),$$
and $\eta*\eta=\eta^2$ in $L$ as the points $A_1,A_2$ and $(1,\eta^2,0)$ are collinear. This shows that $(\Gamma_1,\Gamma_2,\Gamma_3)$
realizes the cyclic group $H$. In the latter case,
$$A_1=(a^{-1},b^{-1}(\frac{b}{a}\sigma+1),1),\,A_2=(a^{-1}(\sigma+1),b^{-1},1),\,A_3=(a^{-1},b^{-1},\frac{-a/\sigma+1}{\-a/\sigma}),$$
and $\sigma*\sigma=1$ as $\{A_1,A_2,E_3\}$ is a collinear triple of points.
Therefore, $(\Lambda_1,\Lambda_2,\Lambda_3)$ realizes a dihedral group of order $2n$.

%%modifica 11 giugno 2011
%Now, a direct computation similar those carried out in Section \ref{triangular3net} gives the result.

An alternative approach to the proof is to lift $(\Lambda_1,\Lambda_2,\Lambda_3)$ to the fundamental tetrahedron of $PG(3,\mathbb{K})$ so that the projection $\pi$ from the point $P_0=(1,1,1,1)$ on the plane $X_4=0$ returns $(\Lambda_1,\Lambda_2,\Lambda_3)$. For this purpose, it is enough to define the sets lying on the edges of the fundamental tetrahedron:
$$
\begin{array}{lll}
&\Gamma_1' = \{(\xi,0,1,0)|\xi\in L_1\},&\Gamma_2' = \{(0,\eta,1,0)|\eta\in L_2\},\\
&\Gamma_3' = \{(1,-\zeta,0,0)|\zeta\in L_3\}, &\Delta_1' = \{(0,\alpha-1,0,-1)|\alpha\in M_1\},\\
&\Delta_2' = \{(\beta-1,0,0,-1)|\beta\in M_2\}, &\Delta_3' = \{(0,0,\gamma-1,-1)|\gamma\in M_3\},
\end{array}
$$
and observe that $\pi(\Gamma_i')=\Gamma_i$ and $\pi(\Delta_i')=\Delta_i$ for $i=1,2,3$. Moreover, a triple $(P_1,P_2,P_3)$ of points with $P_i\in \Gamma_i\cup \Delta_i$ consists of collinear points if and only if if their projection does. Hence, $(\Gamma_1' \cup \Gamma_2', \Gamma_3' \cup \Delta_1', \Delta_2' \cup \Delta_3')$  can be viewed as a ``spatial'' dual $3$-net realizing the same group $H$. Clearly, $(\Gamma_1' \cup \Gamma_2', \Gamma_3' \cup \Delta_1', \Delta_2' \cup \Delta_3')$ is contained in the sides of the fundamental tetrahedron. We claim that these sides minus the vertices form an infinite spatial dual $3$-net realizing the dihedral group $2.\mathbb K^*$.

To prove this, parametrize the points as follows.
\begin{equation}
\begin{array}{rcl}
\Sigma_1 &=&\{ x_1=(x,0,1,0), (\varepsilon x)_1=(0,1,0,x) \mid x\in \mathbb K^*\}, \\
\Sigma_2 &=&\{ y_2=(1,y,0,0), (\varepsilon y)_2=(0,0,1,y) \mid y\in \mathbb K^*\}, \\
\Sigma_3 &=&\{ z_3=(0,-z,1,0),(\varepsilon z)_3=(1,0,0,-z) \mid z\in \mathbb K^*\}. \\
\end{array}
\end{equation}
Then,
\begin{eqnarray*}
x_1,y_2,z_3 \mbox{ are collinear } &\Leftrightarrow& z=xy, \\
(\varepsilon x)_1,y_2,(\varepsilon z)_3 \mbox{ are collinear } &\Leftrightarrow&
z=xy \Leftrightarrow \varepsilon z =(\varepsilon x)y,\\
x_1,(\varepsilon y)_2,(\varepsilon z)_3 \mbox{ are collinear } &\Leftrightarrow&
z=x^{-1}y \Leftrightarrow \varepsilon z =x(\varepsilon y),\\
(\varepsilon x)_1,(\varepsilon y)_2,z_3 \mbox{ are collinear } &\Leftrightarrow&
z=x^{-1}y \Leftrightarrow z =(\varepsilon x)(\varepsilon y).
\end{eqnarray*}

Thus, $(\Gamma_1' \cup \Gamma_2', \Gamma_3' \cup \Delta_1', \Delta_2' \cup \Delta_3')$ is a dual $3$-subnet of $(\Sigma_1,\Sigma_2,\Sigma_3)$ and $H$ is a subgroup of the dihedral group $2.\mathbb K^*$. As $H$ is not cyclic but it has a cyclic subgroup of index $2$, we conclude that $H$ is itself dihedral.
\end{proof}
\section{Classification of low order dual 3-nets}
\label{loworder}
An exhaustive computer aided search gives the following results.
\begin{proposition}
\label{clac8szeged} Any dual $3$-net realizing an abelian group of order $\leq 8$  is algebraic.
%modifica 15 aprile
The dual of Urz\'ua's $3$-nets are the only dual $3$-net which realize the quaternion group of order $8$.
\end{proposition}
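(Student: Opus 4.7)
The plan is a direct computer-algebra enumeration, run once for each group $G$ with $G$ abelian of order $\leq 8$ or $G=Q_8$. I would set up projective coordinates in $PG(2,\mathbb{K})$ for the $3n$ unknown points $P_g^{(i)}$ indexed by $g\in G$ and $i\in\{1,2,3\}$, and impose the $n^2$ collinearity relations $\det\bigl[P_a^{(1)},\,P_b^{(2)},\,P_{ab}^{(3)}\bigr]=0$ dictated by the Cayley table of $G$. Using the naturally labeled conventions of Section \ref{prelim}, the triple $(E_1,E_2,E_3)$ attached to the identity of $G$ is collinear, so I would exploit the $8$-dimensional action of $PGL(3,\mathbb{K})$ to place $E_1=(1,0,0)$, $E_2=(0,1,0)$, $E_3=(1,1,0)$ and to send one further point $P_a^{(1)}$, for a chosen generator $a$, to a fourth fundamental position. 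The remaining $3n-4$ points then carry affine coordinate unknowns.

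Before running any elimination, I would shrink the system using the sub-$3$-net technology of Section 4, which is legitimate if one inducts on $|G|$. Every cyclic subgroup $C_k\leq G$ of order $k\geq 3$ produces a dual $3$-subnet of order $k$ by Lemma \ref{befana}; for $k<|G|$ these are already known to be algebraic, hence of one of the three types (proper algebraic, triangular, or conic-line) described in Propositions \ref{algebraic1}, \ref{triangular}, and \ref{cltype}. This pins down, up to projectivity, the cubic, the triangle, or the conic-plus-line supporting each cyclic sub-$3$-net, and confines the remaining unknowns to a low-dimensional stratum. I would then hand the resulting polynomial ideal to a Gr\"obner basis engine over $\mathbb{Q}$ and over prime fields of several large characteristics, producing an explicit finite list of projective-equivalence classes of solutions for each target group $G\in\{C_2,C_3,C_4,C_2\times C_2,C_5,C_6,C_7,C_8,C_2\times C_4,C_2^3,Q_8\}$.

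For each abelian output I would verify algebraicity by rank-testing the $3n\times 10$ matrix of monomial evaluations of ternary cubics at the $3n$ configuration points: rank at most nine forces a nonzero cubic to pass through every point. Lame's Theorem accelerates this check, since once a cubic is known to contain eight points of a $3\times 3$ Lame sub-configuration of the dual $3$-net, built from coset partitions, the ninth is automatically on the cubic, and the membership propagates across the full configuration. For $G=Q_8$ each output of the enumeration would be identified, via an explicit projective equivalence, with the dual of a member of Urz\'ua's family \cite{urzua2009}. The main obstacle is computational: at $|G|=8$ the naive system of $64$ collinearities on $24$ unknown points lies at the edge of what a symbolic Gr\"obner engine can resolve, so the preliminary subgroup-based reduction, together with a monomial order eliminating first the coordinates of points in already-classified sub-nets, is indispensable.
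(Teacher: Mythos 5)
Your proposal takes essentially the same route as the paper: the paper's entire justification for Proposition~\ref{clac8szeged} (and its companions in Section~\ref{loworder}) is the single sentence that ``an exhaustive computer aided search gives the following results,'' i.e.\ precisely the kind of coordinatized enumeration over each candidate group that you describe. Your write-up merely makes explicit the normalization by $PGL(3,\mathbb{K})$, the subgroup-based reduction via Lemma~\ref{befana} and Proposition~\ref{th5.3}, and the rank test against the ten-dimensional space of ternary cubics, all of which are consistent with (and more detailed than) what the paper records.
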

\begin{proposition}
\label{clac9szeged} Any dual $3$-net realizing an abelian group of order $9$  is algebraic.
\end{proposition}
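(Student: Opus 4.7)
The plan is to reduce the statement to a finite case-check, exploiting the subgroup structure of $G$ and the order-$3$ case of Proposition \ref{clac8szeged}. The abelian groups of order $9$ are $G=C_9$ and $G=C_3\times C_3$, which I would handle separately.

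First I would invoke Lemmas \ref{befana} and \ref{befanabis}: every subgroup $H\le G$ of order $3$ is normal, and each pair of cosets of $H$ yields a dual $3$-subnet realizing $H$. By Proposition \ref{clac8szeged} applied to the abelian group $C_3$ (of order $3\le 8$), each such subnet is algebraic, hence lies on a plane cubic. For $G=C_9$ the unique subgroup of order $3$ produces $9$ such subnets; for $G=C_3\times C_3$ the four subgroups of order $3$ jointly produce $36$ subnets (with heavy overlap), giving a dense web of cubic constraints on the $27$ points.

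Next I would fix a projective frame adapted to the configuration. Using the identity triangle $(E_1,E_2,E_3)$ together with the triangle for a generator $a$ of a chosen order-$3$ subgroup $H$ normalizes enough of the picture that only a few affine coordinates remain free. The collinearity conditions read off from the Cayley table of $G$ (each relation $x\cdot y=z$ imposing the collinearity of the corresponding triple of points), combined with the polynomial equations asserting that each order-$3$ subnet lies on a cubic, define a finite polynomial system. A Gr\"obner-basis computation over $\mathbb Q$ (lifted to $\mathbb K$) enumerates the solutions up to projective equivalence; for each solution, it remains to verify that all $27$ points lie on a single plane cubic, for instance by computing the cubic through $9$ sufficiently general points of $\Lambda_1\cup\Lambda_2$ and checking that the remaining $18$ points lie on it.

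The main obstacle is controlling the size of the polynomial system. This is tempered by three features of the problem: the abundance of symmetry coming from left- and right-translations of the abelian group $G$ (which act as projectivities of the dual $3$-net and can be used to fix additional coordinates), the tight constraints imposed by the overlapping algebraic order-$3$ subnets, and Lame's Theorem, which, once a single nine-point subnet is placed on a cubic, can be applied iteratively to propagate the algebraic property along collinear triples to the entire $27$-point configuration, reducing the final verification to a small symbolic check.
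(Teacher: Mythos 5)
Your overall strategy---reduce the statement to a finite symbolic computation and check it by machine---is in fact all the paper does: Proposition \ref{clac9szeged} appears under the sentence ``An exhaustive computer aided search gives the following results,'' with no further argument. So the computational core of your plan is not objectionable in itself. The difficulty is that the two structural reductions you rely on to make that computation tractable do not do the work you assign to them.

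First, the ``dense web of cubic constraints'' obtained by applying Proposition \ref{clac8szeged} to the order-$3$ subnets is vacuous. A dual $3$-subnet of order $3$ consists of $9$ points, and \emph{any} $9$ points of $PG(2,\mathbb K)$ lie on some plane cubic, since passing through a point is one linear condition on the $10$ coefficients of a ternary cubic form. This is precisely why the paper defines ``algebraic'' only for $n\ge 4$, and why the proof of Proposition \ref{th5.3} dismisses $n=3$ in a single line. Hence the $9$ (or $36$) order-$3$ subnets impose no polynomial conditions beyond the collinearity relations you already extract from the Cayley table; your system is not overdetermined in the way you suggest. Second, the iterative Lam\'e propagation you invoke at the end is the mechanism of the proof of Proposition \ref{th5.3}, and it depends essentially on the cyclic labelling of the components: at each step one needs two totally reducible cubics through eight points already known to lie on $\cF$ whose ninth point is the one to be captured, and the explicit collinear triples used there are produced by arithmetic modulo $n$. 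For $G=C_9$ this works and you need no computation at all, since Proposition \ref{th5.3} already gives the result. For $G=C_3\times C_3$, the only case with genuine content, you have not exhibited the Lam\'e configurations that would drive the induction, and it is not clear they exist; if such a propagation worked for general abelian groups, the paper would need neither the computer search here nor Yuzvinsky's Proposition \ref{th5.4} elsewhere. What remains of your proposal for $C_3\times C_3$ is a bare Gr\"obner-basis enumeration of all embeddings, which may well be what the authors did, but as written your argument gives no reason to believe the computation is feasible or complete: its two claimed simplifications are, respectively, empty and unproved.
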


\begin{proposition}
\label{clac12} If $p=0$, no dual $3$-net realizes ${\rm{Alt}_4}$.
\end{proposition}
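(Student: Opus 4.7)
The plan is to assume, for contradiction, a dual $3$-net $(\Lambda_1,\Lambda_2,\Lambda_3)$ of order $12$ that realizes $\mathrm{Alt}_4$, and to exploit the normal Klein four subgroup $V_4$. By Lemma \ref{befanabis}, the three cosets of $V_4$ in each component induce $9$ dual $3$-subnets of order $4$, each realizing $V_4$. Proposition \ref{clac8szeged} implies that every one of these $9$ subnets is algebraic; since a triangular (Proposition \ref{triangular}) or a conic-line (Proposition \ref{cltype2}) dual $3$-net realizes a cyclic group, each $V_4$-subnet must lie on an \emph{irreducible} cubic. Moreover, in characteristic $0$ neither $(\mathbb K,+)$ nor $(\mathbb K^*,\cdot)$ contains a copy of $V_4$, so Proposition \ref{groupcubic1} rules out singular irreducible cubics: each of the $9$ cubics is smooth, and each set of $12$ points is arranged on the $2$-torsion as prescribed by Propositions \ref{groupcubic} and \ref{algebraic1}.

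My next step is to set up coordinates. After a projectivity I place the smooth cubic $\cF$ carrying the central subnet $(V_4,V_4,V_4)$ into Hesse normal form, so that its $12$ points are fixed canonically. The quotient $\mathrm{Alt}_4/V_4\cong C_3$ is generated by a fixed $t$ of order $3$, and the remaining six cosets of $V_4$ are labelled by $t$ and $t^{-1}$ (two in each component). Every collinearity forced by the Cayley table of $\mathrm{Alt}_4$---in particular, that every line through a point of $tV_4\subset \Lambda_1$ and a point of $V_4\subset \Lambda_2$ meets $tV_4\subset \Lambda_3$, and analogous statements for the other product cosets---becomes a polynomial equation in the coordinates of the $24$ non-central points. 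Further constraints come from the requirement that each of the remaining $8$ non-central $V_4$-subnets must also lie on a smooth plane cubic in the form prescribed by Proposition \ref{algebraic1}.

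The main obstacle is to verify that the resulting polynomial system has no solution in characteristic $0$. After projective normalization only finitely many parameters remain, so this is a concrete finite problem decided by a Gr\"obner basis computation; this is precisely the exhaustive computer aided search to which the proposition appeals. A conceptual shortcut would be to combine the constraints coming from the $4$ non-normal Sylow $3$-subgroups of $\mathrm{Alt}_4$ (each producing, via Lemma \ref{befana}, dual $3$-subnets of order $3$) with the eight cubics through the non-central $V_4$-subnets, attempting via Lame's Theorem to force all the cubics to coincide on a single smooth cubic $\cF$, after which Proposition \ref{groupcubic} would embed $\mathrm{Alt}_4$ into the abelian group $C_n\times C_n$, an immediate contradiction since $\mathrm{Alt}_4$ is non-abelian. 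Carrying such a unification through by hand is exactly the step that appears intractable, and this is what explains the reliance on computation.
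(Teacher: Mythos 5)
Your proposal takes essentially the same approach as the paper: Proposition \ref{clac12} is stated there with no written argument at all, only the remark that ``an exhaustive computer aided search'' yields it, and your reduction likewise bottoms out in a finite computation. Your preliminary reductions are sound and correctly carried out --- the normal $V_4$ gives nine order-$4$ subnets via Lemma \ref{befanabis}, each algebraic by Proposition \ref{clac8szeged}, necessarily on a smooth cubic since $V_4$ is not cyclic and, in characteristic $0$, neither $(\mathbb K,+)$ nor $(\mathbb K^*,\cdot)$ contains a Klein four group, with components lying on cosets of the $2$-torsion by Propositions \ref{groupcubic} and \ref{algebraic1} --- so they legitimately structure the search, but like the paper you ultimately defer the decisive step to the machine.
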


\section{Characterizations of the infinite families}
\begin{proposition}
\label{th5.3} Every dual $3$-net realizing a cyclic group is algebraic.
\end{proposition}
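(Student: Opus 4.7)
The plan is strong induction on $n$, with the low-order classification of Section \ref{loworder} providing the base and Lame's theorem providing the gluing tool for the inductive step. For $4\le n\le 9$ the cyclic group $C_n$ is abelian of order at most $9$, so Propositions \ref{clac8szeged} and \ref{clac9szeged} apply directly and $(\Lambda_1,\Lambda_2,\Lambda_3)$ is algebraic.

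For the inductive step with $n\ge 10$ composite, write $n=km$ with $1<m<n$ (choosing $m\ge 4$ whenever possible) and let $H\le C_n$ be the unique subgroup of order $m$. Since $C_n$ is cyclic, $H$ is normal, and Lemma \ref{befanabis} produces $k^2$ dual $3$-subnets of $(\Lambda_1,\Lambda_2,\Lambda_3)$, each of order $m$ and realizing $C_m$. By the inductive hypothesis every such subnet is algebraic and lies on a plane cubic. The technical step is to show that these cubics coincide: two subnets that share a common component $\Gamma_i$ (an $H$-member) jointly contain eight points of a Lame configuration on a common cubic, and Lame's theorem then forces the ninth point, hence the two cubics, to agree. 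Iterating this argument across all pairs of cosets glues the whole net onto a single plane cubic $\cF$.

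The principal obstacle is the case $n\ge 11$ prime, where $C_n$ has no nontrivial proper subgroup and the subgroup-induction collapses. Here my plan is to pick nine points of the $3$-net in sufficiently general position and fit a plane cubic $\cF$ through them (which exists, since plane cubics form a ten-dimensional linear system), and then to enlarge the portion of the net known to lie on $\cF$ one point at a time by repeated applications of Lame's theorem. The essential input is that commutativity of $C_n$ produces many Lame-type incidences in the net: for each pair $a,b\in C_n$ both $\alpha(a),\beta(b),\gamma(a+b)$ and $\alpha(b),\beta(a),\gamma(a+b)$ are collinear triples, and combining several such commutative squares with further collinear triples produces configurations of eight known points of $\cF$ whose ninth Lame closure is a further point of the net. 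Propagating this closure until $\Lambda_1\cup\Lambda_2\cup\Lambda_3\subset\cF$ is the delicate part of the argument.

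Once all points of the $3$-net lie on a plane cubic $\cF$, the net is algebraic by definition, and in the irreducible case Proposition \ref{algebraic1} exhibits the components as cosets of a cyclic subgroup of $(\cF,+)$, while the reducible cases are covered by Propositions \ref{triangular} and \ref{cltype}.
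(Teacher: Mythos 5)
Your overall instinct---that Lame's theorem is the engine---matches the paper, but the proof as structured has two genuine gaps, and the case division you impose is unnecessary.

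First, the composite case. The claim that two algebraic $3$-subnets sharing a common $H$-member ``jointly contain eight points of a Lame configuration on a common cubic'' so that their cubics must agree is not justified and is not true in the generality you need. The two subnets share only the $m$ points of the common component $\Gamma_i$; for small $m$ this is far fewer than the $\geq 10$ points that B\'ezout would require to force two irreducible cubics to coincide, and you have not exhibited any specific Lame configuration whose eight known points lie on a \emph{single} one of the two cubics. This gluing problem is exactly the content of the paper's Proposition \ref{superlemmaszeged}, which is a substantial argument in its own right and which shows that the cubics through subnets sharing a component need \emph{not} coincide in certain exceptional situations ($\Gamma_2$ on a line, or $n=5,6,9$). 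So the inductive step cannot be waved through.

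Second, and more importantly, the prime case is where you correctly locate the real difficulty---and then you leave it as ``the delicate part of the argument.'' That delicate part \emph{is} the proof. The paper carries it out explicitly: after a collinear relabeling it takes the nine points $0_1,1_1,2_1,0_2,1_2,2_2,(n-1)_3,(n-2)_3,(n-3)_3$, observes that the six collinear triples they support form a Lame configuration, defines $\cF$ as the member of the resulting pencil through $3_1$, and then runs four explicit Lame closures (adding $(n-4)_3$, $3_2$, $(n-5)_3$, $4_1$) to show that the ten-point incidence hypothesis reproduces itself shifted by one; iterating $n-4$ times puts the whole net on $\cF$. Nothing in that argument uses primality or compositeness of $n$; it works uniformly for every $n\geq 4$ using only commutativity and the labeling by $\mathbb{Z}/n\mathbb{Z}$. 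So the correct move is to run that propagation for all $n$ at once, which also removes your dependence on the computer-aided Propositions \ref{clac8szeged} and \ref{clac9szeged} for the base cases. As written, your proposal proves the statement in no case completely: the composite step rests on an unproved gluing claim and the prime step is only a plan.
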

\begin{proof} For $n=3$, we have that $3n=9$, and hence all points of the dual 3-net lie on a cubic. Therefore, $n\geq 4$ is assumed.

Let $(\Lambda_1,\Lambda_2,\Lambda_3)$ be a dual $3$-net of order $n$ which realizes the cyclic group $(L,*)$. Therefore, the points of each component
are labeled by $I_n$. After a collinear relabeling with respect to $\Lambda_3$, consider the configuration of the following nine points: $0,1,2$ from $\Lambda_1$, $0,1,2$ from $\Lambda_2$ and $n-1,n-2,n-3$ from $\Lambda_3$. For the seek of a clearer notation, the point with label $a$ in the component $\Lambda_m$ will be denoted by $a_m$.

The configuration presents six triples of collinear points, namely
\begin{itemize}
\item[(i)] $\{0_1,1_2,(n-1)_3\},\,\,\{1_1,2_2,(n-3)_3\},\,\,\{2_1,0_2,(n-2)_3\}$;
\item[(ii)] $\{0_1,2_2,(n-2)_3\},\,\,\{1_1,0_2,(n-1)_3\},\,\,\{2_1,1_2,(n-3)_3\}$;
\end{itemize}
Therefore, the corresponding lines form a Lame configuration. Furthermore, the three (pairwise distinct) lines determined by the two triples in (i) can be regarded as a totally reducible plane cubic, say $\cF_1$. Similarly, a totally reducible plane curve, say $\cF_2$, arises from the triples in (ii). Obviously, $\cF_1\neq \cF_2$. Therefore, the nine points of the above Lame configuration are the base points of the pencil generated by $\cF_1$ and $\cF_2$. Now, define the plane cubic $\cF$ to be the cubic  from the pencil which contains $3_1$.
%Note that the triple
%\begin{itemize}
%\item[(iii)]$\{3_1,0_2,(n-3)_3\}$
%\end{itemize}
%consists of collinear points.

Our next step is to show that $\cF$ also contains each of the points $(n-4)_3$ and $3_2$.
For this purpose, consider the following six triples of collinear points
\begin{itemize}
\item[(iii)] $\{1_1,2_2,(n-3)_3\},\,\,\{2_1,0_2,(n-2)_3\},\,\,\{3_1,1_2,(n-4)_3\}$;
\item[(iv)] $\{1_1,1_2,(n-2)_3\},\,\,\{2_1,2_2,(n-4)_3\},\,\,\{3_1,0_2,(n-3)_3\}$;
\end{itemize}
Again, the corresponding lines form a Lame configuration. Since eight of its points, namely $1_1,2_1,3_1,0_2,1_2,2_2,(n-2)_3,(n-3)_3$ lie on $\cF$,
Lame's theorem shows that $(n-4)_3$ also lies on $\cF$. To show that $3_2\in \cF$, we proceed similarly using the following six triples of collinear points
\begin{itemize}
\item[(v)] $\{0_1,3_2,(n-3)_3\},\,\,\{1_1,1_2,(n-2)_3\},\,\,\{2_1,2_2,(n-4)_3\}$;
\item[(vi)] $\{0_1,2_2,(n-2)_3\},\,\,\{1_1,3_2,(n-4)_3\},\,\,\{2_1,1_2,(n-3)_3\}$;
\end{itemize}
to define a Lame configuration that behaves as before, eight of its points, namely $0_1,1_1,2_1,1_2,2_2,(n-2)_3,(n-3)_3,(n-4)_3$ lie on $\cF$,
from Lame's theorem,  $3_2$ also lies on $\cF$.

This completes the proof for $n=4$. We assume that $n\geq 5$ and show that $(n-5)_3$ lies on $\cF$. Again, we use the above argument based on the
Lame configuration of the six lines arising from the following six triples of points:
\begin{itemize}
\item[(vii)] $\{1_1,3_2,(n-4)_3\},\,\,\{2_1,1_2,(n-3)_3\},\,\,\{3_1,2_2,(n-5)_3\}$;
\item[(viii)] $\{1_1,2_2,(n-3)_3\},\,\,\{2_1,3_2,(n-5)_3\},\,\,\{3_1,1_2,(n-4)_3\}$;
\end{itemize}
{}From the previous discussion, eight of these points lie on $\cF$. Lame's theorem yields that the ninth, namely $(n-5)_3$, also lies on $\cF$.
{}From this we infer that $4_1\in \cF$ also holds. To do this, we repeat the above argument for the Lame configuration arising from the six triples of points
\begin{itemize}
\item[(ix)] $\{2_1,2_2,(n-4)_3\},\,\,\{3_1,0_2,(n-3)_3\},\,\,\{4_1,1_2,(n-5)_3\}$;
\item[(x)] $\{2_1,1_2,(n-3)_3\},\,\,\{3_1,2_2,(n-5)_3\},\,\,\{4_1,0_2,(n-4)_3\}$;
\end{itemize}
Again, we see that eight of these points lie on $\cF$. Hence the ninth, namely $4_1$, also lies on $\cF$, by Lame's theorem.

Therefore, from the hypothesis that $\cF$ passes through the ten points $$0_1,1_1,2_1,3_1,0_2,1_2,2_2,(n-1)_3,(n-2)_3,(n-3)_3,$$
we have deduced that $\cF$ also passes through the ten points
$$1_1,2_1,3_1,4_1,1_2,2_2,3_2,(n-2)_3,(n-3)_3,(n-4)_3. $$
Comparing these two sets of ten points shows that the latter derives from the former shifting by $+1$ when the indices are $1$ and $2$, while by $-1$ in when the indices are $3$. Therefore, repeating the above argument $n-4$ times gives that all points in the dual $3$-net lie on $\cF$.
\end{proof}

\begin{proposition}{\rm{\cite[Theorem 5.4]{ys2004}}}
\label{th5.4} If an abelian group $G$ contains an element of order $\geq 10$ then every dual $3$-net realizing $G$ is algebraic.
\end{proposition}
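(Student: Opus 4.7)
The plan is to bootstrap from the cyclic case (Proposition~\ref{th5.3}) to the abelian case via B\'ezout's theorem. Let $g\in G$ have order $m\geq 10$, and set $C=\langle g\rangle$. Because $G$ is abelian, $C$ is normal, so Lemma~\ref{befanabis} produces a dual $3$-subnet realizing $C$ from every ordered pair of cosets of $C$ in $G$; since $m\geq 10\geq 4$, Proposition~\ref{th5.3} furnishes a uniquely determined plane cubic through each such subnet. Let $\cF$ denote the cubic through the ``diagonal'' subnet $(C,C,C)$, and for each $h\in G$ let $\cF_h$ denote the cubic through the subnet $(hC,C,hC)$ obtained from Lemma~\ref{befanabis} with $g_1=h$, $g_2=e$, which shares the middle component $C\subset\Lambda_2$ with the diagonal subnet.

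\textbf{Matching cubics via B\'ezout.} Both $\cF$ and $\cF_h$ pass through all $m\geq 10$ points of $C\subset\Lambda_2$. Since two distinct plane cubics meet in at most $3\cdot 3=9$ points, $\cF$ and $\cF_h$ must share a common positive-dimensional irreducible component. If $\cF$ is irreducible, this component is $\cF$ itself, and comparing degrees yields $\cF_h=\cF$. The reducible case is where the work lies: if $\cF=\ell_1\cup\ell_2\cup\ell_3$ is a triangle, the $m\geq 10$ collinear points of $C$ on $\ell_2$ force $\ell_2\subseteq\cF_h$, so $\cF_h=\ell_2\cup\cD_h$ for some degree-two curve $\cD_h$; since an irreducible cubic cuts $\ell_2$ in at most $3$ points, $\cF_h$ must be either triangular or conic--line in type, so Propositions~\ref{triangular} and \ref{cltype2} (guaranteeing cyclicity in these subtypes) are consistent with its realizing $C$, and Propositions~\ref{triangular1} and \ref{cltype1} then pin the two triangles (resp.\ conic--line configurations) down sufficiently to conclude $\cF_h=\cF$. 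The case where $\cF$ itself is conic--plus--line is handled symmetrically.

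\textbf{Conclusion.} Once $\cF_h=\cF$ for every $h\in G$, each coset $hC\subset\Lambda_1$ and each coset $hC\subset\Lambda_3$ lies on $\cF$. Running the same argument on the subnets $(C,hC,hC)$, which share the component $C\subset\Lambda_1$ with the diagonal subnet, places every coset of $C$ in $\Lambda_2$ on $\cF$ as well. Therefore $\Lambda_1\cup\Lambda_2\cup\Lambda_3\subset\cF$ and the dual $3$-net is algebraic. The principal obstacle is the reducible-$\cF$ analysis in the middle step: B\'ezout by itself only delivers a common curve component, and the structural results on triangular and conic--line dual $3$-subnets are what allow this to be upgraded to equality of plane cubics.
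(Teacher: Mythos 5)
The paper does not prove this statement; it is quoted from Yuzvinsky \cite{ys2004}. Your strategy -- decompose the net into cyclic subnets through the element of order $\geq 10$, put each on a cubic by Proposition~\ref{th5.3}, and glue by B\'ezout -- is exactly the strategy the paper itself deploys for the more general Propositions~\ref{superlemmaszeged} and~\ref{superlemma}, and for $n\geq 10$ the B\'ezout step is indeed immediate since the shared component alone gives more than $9$ common points. The irreducible case and the conic--line case (where Proposition~\ref{cltype1} really does force the residual lines to coincide) are fine.

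The gap is in the triangular case, and it is essential rather than cosmetic. Proposition~\ref{triangular1} only says that two triangular subnets sharing a component have triangles agreeing in the two vertices on the common side; the third vertices may differ, so it does \emph{not} yield $\cF_h=\cF$. This is not a removable defect of the citation: tetrahedron-type dual $3$-nets (Proposition~\ref{tetra}) provide concrete configurations in which all the cyclic subnets through a fixed component are triangular, pairwise share two vertices, and yet lie on genuinely different triangles -- and such a net is \emph{not} algebraic. So no purely geometric argument can upgrade ``common component'' to ``equal cubics'' here; one must run the dichotomy of Proposition~\ref{cortetra0} (whose hypothesis requires checking \emph{all} subnets realizing $C$, not just your two families) to conclude that the whole net is either triangular -- hence algebraic for a different reason than $\cF_h=\cF$ -- or of tetrahedron type, and then kill the latter by the group-theoretic observation that tetrahedron-type nets realize dihedral groups, which an abelian group with an element of order $\geq 10$ cannot be. You also leave untreated the mixed possibility that the diagonal subnet is triangular while some $(hC,C,hC)$ is conic--line with the shared component on the line, where Proposition~\ref{cltype1} does not apply because the shared component is not one of the two conic components. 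Supplying the \ref{cortetra0}-plus-dihedral-exclusion argument would close the proof.
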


\begin{proposition}
\label{noelem}
{\rm{\cite[Theorem 4.2]{ys2004}}} No dual $3$-net realizes an elementary abelian group of order $2^h$ with $h\geq 3$.
\end{proposition}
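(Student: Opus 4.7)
The plan is to reduce to the smallest interesting case $h=3$ and then exploit the known structural results on algebraic dual $3$-nets. Since $(\mathbb{Z}/2)^h$ with $h\geq 3$ contains a subgroup $E$ isomorphic to $(\mathbb{Z}/2)^3$, Lemma \ref{befana} guarantees that any dual $3$-net realizing $(\mathbb{Z}/2)^h$ contains a dual $3$-subnet realizing $E$. Thus I only have to rule out a dual $3$-net of order $8$ realizing the elementary abelian group $E$ of order $8$.

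Next, Proposition \ref{clac8szeged} says that every dual $3$-net realizing an abelian group of order at most $8$ is algebraic. Therefore the hypothetical dual $3$-net realizing $E$ must lie on a plane cubic $\cF$, and I can split the argument according to the three subfamilies of algebraic dual $3$-nets described at the start of Section~4: totally reducible (necessarily triangular, by Lemma \ref{lem250220111}), conic-line type, or proper.

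In the triangular case, Proposition \ref{triangular} asserts that the realized group is cyclic, contradicting the fact that $E$ is not cyclic. In the conic-line case, Proposition \ref{cltype2} again forces a cyclic realized group, another contradiction. Hence $\cF$ must be irreducible and the dual $3$-net must be proper algebraic, so by Proposition \ref{algebraic1} the group $E$ must embed as a subgroup of $(\cF,+)$.

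The final step is to show that such an embedding is impossible. The standing assumption gives $p=0$ or $p>n=8$, so in particular $p\neq 2$. If $\cF$ is non-singular, Proposition \ref{groupcubic} (applied with $d=2\neq p$) shows that the $2$-torsion of $(\cF,+)$ is isomorphic to $C_2\times C_2$, which has rank $2$. If $\cF$ is singular, Proposition \ref{groupcubic1} identifies $(\cF,+)$ with either the additive group of $\mathbb{K}$ (whose $2$-torsion is trivial when $p\neq 2$) or the multiplicative group of $\mathbb{K}$ (whose $2$-torsion is the group $\{\pm 1\}$ of order $2$). In every case the $2$-torsion of $(\cF,+)$ has rank at most $2$, so $E\cong (\mathbb{Z}/2)^3$ cannot embed, and we reach the desired contradiction. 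I do not anticipate any genuine obstacle beyond the clean bookkeeping of the three cases; everything is driven by the previously established classification of algebraic dual $3$-nets and the well-known bound on the $2$-torsion of a plane cubic.
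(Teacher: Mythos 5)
Your proof is correct, but it takes a genuinely different route from the paper: the paper does not prove Proposition~\ref{noelem} at all, it simply imports it from Yuzvinsky \cite[Theorem 4.2]{ys2004}, whereas you derive it internally from Lemma~\ref{befana}, Proposition~\ref{clac8szeged}, the classification of algebraic dual $3$-nets, and the torsion bounds of Propositions~\ref{groupcubic} and~\ref{groupcubic1}. The reduction to order $8$ and the final torsion computation are clean and essentially reproduce the spirit of Yuzvinsky's original argument. Two remarks on what your version costs and buys. First, the whole weight of the algebraicity step rests on Proposition~\ref{clac8szeged}, which is a computer-search result; this is logically unobjectionable (it precedes \ref{noelem} and is established independently, and even if the search found that no dual $3$-net realizes $(\mathbb{Z}/2)^3$ the statement you invoke is then vacuously true), but it means your ``proof'' is only as self-contained as that exhaustive search, so it trades one black box (an external citation) for another (a computation). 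Second, in the case split you assert that a reducible associated cubic forces the net to be triangular or of conic-line type in the strict sense, i.e.\ that each component lies wholly on one irreducible component of the cubic; this is exactly Lemma~\ref{20july}, which you should cite (or justify by a short counting argument using the fact that a line containing two points of one component meets no other component), rather than inferring it only from the informal ``three subfamilies'' remark and Lemma~\ref{lem250220111}. With that citation added, the argument is complete, and it has the merit of making the paper independent of \cite{ys2004} for this particular fact.
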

\begin{proposition}{\rm{\cite[Theorem 5.1]{bkm}}}
\label{bkmth5.1} Let $(\Lambda_1,\Lambda_2,\Lambda_3)$ be a dual $3$-net such that at least one component lies on a line. Then $(\Lambda_1,\Lambda_2,\Lambda_3)$ is either triangular or of conic-line type.
\end{proposition}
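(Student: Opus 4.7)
My plan is to take the component lying on a line to be $\Lambda_3\subseteq\ell$, and to show that the remaining two components $\Lambda_1$ and $\Lambda_2$ either both lie on lines (giving the triangular case together with $\ell$) or are jointly contained in an irreducible conic (giving the conic-line case).

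\textbf{Setup via a sharply transitive set of projections.} For each $C\in\Lambda_3$ projection from $C$ is a bijection $\pi_C\colon\Lambda_1\to\Lambda_2$ sending $A$ to the unique $B\in\Lambda_2$ collinear with $A$ and $C$. Fixing a reference $C_0\in\Lambda_3$ and identifying $\Lambda_2$ with $\Lambda_1$ via $\pi_{C_0}^{-1}$, the maps $\sigma_C=\pi_{C_0}^{-1}\circ\pi_C$ form a sharply transitive set $\Sigma$ of $n$ permutations of $\Lambda_1$: the identity corresponds to $C=C_0$, every non-identity element is fixed-point-free, and for any $A,A'\in\Lambda_1$ exactly one $\sigma_C$ sends $A$ to $A'$. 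A key subsidiary observation is that no line joining two points of $\Lambda_1$ (or of $\Lambda_2$) meets $\ell$ at a point of $\Lambda_3$, for such a line would meet $\Lambda_1$ in two points and $\Lambda_2$ in one, violating the dual 3-net axioms.

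\textbf{Production of a degree-two curve.} The heart of the argument is to produce an algebraic curve $\cQ$ of degree at most $2$ containing $\Lambda_1\cup\Lambda_2$. I would build $\cQ$ by an iterated Lame-configuration argument in the spirit of Proposition~\ref{th5.3}. For carefully selected six-tuples of collinear triples drawn from $\Lambda_1$, $\Lambda_2$ and $\Lambda_3$, the nine common points lie in a pencil of cubics. Because $\ell$ already contains the $n\geq 4$ points of $\Lambda_3$, any cubic in the pencil meeting $\ell$ in more than three points must split off $\ell$; the residual quadratic factor then carries the eight points of $\Lambda_1\cup\Lambda_2$ used in the configuration. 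A Lame-style propagation, starting from one such initial conic and expanded along the sharply transitive set $\Sigma$, extends $\cQ$ to contain all $2n$ points of $\Lambda_1\cup\Lambda_2$.

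\textbf{Dichotomy and conclusion.} If $\cQ$ is irreducible, then $\Lambda_1\cup\Lambda_2\subseteq\cQ$ and together with $\Lambda_3\subseteq\ell$ we are in the conic-line case. If instead $\cQ=\ell_1\cup\ell_2$ splits into two lines, then each of $\Lambda_1,\Lambda_2$ must sit entirely on one of $\ell_1,\ell_2$: otherwise some $\ell_i$ would contain points from both $\Lambda_1$ and $\Lambda_2$, and a line joining such a pair would either coincide with $\ell_i$ (placing too many points on $\ell_i$) or, failing that, force $\ell_i\cap\ell$ to be a point of $\Lambda_3$, contradicting the subsidiary observation above. Together with $\ell$ the three lines form the triangle required for the triangular case.

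\textbf{Main obstacle.} The decisive step is producing the conic $\cQ$. Unlike in Proposition~\ref{th5.3}, we cannot exploit a group structure on the net, so the Lame configurations must be selected purely on the basis of the sharply transitive set $\Sigma$ and the standing bound $n<p$ (needed to prevent parasitic splittings of the auxiliary cubics into inseparable components in positive characteristic). Orchestrating this selection so that a single quadratic factor $\cQ$ emerges uniformly, rather than curve-by-curve contributions that fail to match up, is the technical crux of the argument.
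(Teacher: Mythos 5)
The paper does not actually prove this statement: it is imported verbatim as \cite[Theorem 5.1]{bkm}, so there is no internal proof to compare against. Judged on its own terms, your proposal has a genuine gap precisely where the proposition has its content. Everything you establish rigorously (the projections $\pi_C$, the sharply transitive set $\Sigma$, the observation that no line through two points of $\Lambda_1$ meets $\Lambda_3$, and the endgame dichotomy between an irreducible $\cQ$ and $\cQ=\ell_1\cup\ell_2$) is either routine or downstream of the one claim that matters, namely that $\Lambda_1\cup\Lambda_2$ lies on a curve of degree $2$. That claim is never proved; you explicitly defer it as the ``technical crux,'' so the proposal is a plan rather than a proof.

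Moreover, the Lam\'e mechanism as you describe it does not work as stated. A Lam\'e configuration built from lines of the net involves only \emph{three} points of $\Lambda_3$, so the cubics in the associated pencil are only forced through three points of $\ell$ and are not compelled to split off $\ell$; to get a residual conic you must additionally choose the pencil member through a fourth point of $\Lambda_3$, and even then you obtain one conic through only the six points of $\Lambda_1\cup\Lambda_2$ occurring in that particular configuration. The real difficulty is the propagation: in Proposition \ref{th5.3} the step-by-step extension works because the cyclic group law supplies, for free, the new collinear triples needed at each stage (the ``shift by $+1$'' bookkeeping), whereas a general dual $3$-net with one linear component is coordinatized only by a loop, and the sharply transitive set $\Sigma$ by itself does not yield the identities among collinear triples that the Lam\'e argument consumes. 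You acknowledge this obstacle but offer no way around it, and it is not a technicality --- overcoming it is the entire theorem. (The dichotomy at the end is essentially fine, including the implicit exclusion of a double line, but it cannot be reached without $\cQ$.)
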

\begin{lemma}
\label{20july} Let $(\Gamma_1,\Gamma_2,\Gamma_3)$ be an algebraic dual $3$-net lying on a plane cubic $\cF$.
If $\cF$ is reducible, then $(\Gamma_1,\Gamma_2,\Gamma_3)$ is either triangle or of conic-line type, according as $\cF$ and splits into three lines or into a line and an irreducible conic.
\end{lemma}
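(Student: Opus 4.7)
The proof plan is to argue by cases on how the reducible cubic $\cF$ decomposes, in each case combining B\'ezout's theorem with the defining property of a dual $3$-net (a line meeting two distinct components meets each in exactly one point).

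\textbf{Case 1: $\cF = \ell_1 \cup \ell_2 \cup \ell_3$.} First I rule out coincidences among the three lines: since a line meeting two distinct components of the $3$-net meets each in exactly one point, at most $3$ net-points can lie on such a line, and $n \ge 4$ then forces the three lines to be distinct. Setting $a_{ij} = |\Gamma_i \cap \ell_j|$, the $3$-net property yields a dichotomy for each line $\ell_j$: either $(a_{1j}, a_{2j}, a_{3j}) = (1,1,1)$, or at most one of the $a_{ij}$ is non-zero. A short case analysis on the number of lines of each type, combined with $\sum_j a_{ij} = n \ge 4$, eliminates every configuration except the one in which each $\Gamma_i$ lies entirely on a single line and the three lines carry three distinct components. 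The dual $3$-net is then regular, and Lemma~\ref{lem250220111} forces it to be triangular.

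\textbf{Case 2: $\cF = \cC \cup \ell$ with $\cC$ an irreducible conic.} The first step is to show that at most one component meets $\ell$. If two did, say $P \in \Gamma_i \cap \ell$ and $Q \in \Gamma_j \cap \ell$ with $i \neq j$, then $PQ = \ell$ carries the third $3$-net point on it, so in fact all three components meet $\ell$ in exactly one point each. But then for any $P' \in \Gamma_i \cap (\cC \setminus \ell)$ and $Q' \in \Gamma_j \cap (\cC \setminus \ell)$, B\'ezout gives $P'Q' \cap \cF = \{P', Q', P'Q' \cap \ell\}$, so the third $3$-net point on $P'Q'$ must be the unique point $R_0 \in \Gamma_k \cap \ell$. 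Fixing $Q'$, all $n-1$ choices of $P'$ would then force $P' \in Q'R_0 \cap \cC$; but this intersection has size at most $2$, contradicting $n - 1 \ge 3$.

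So exactly one component, say $\Gamma_1$, meets $\ell$, and $\Gamma_2 \cup \Gamma_3 \subset \cC \setminus \ell$. Suppose $\Gamma_1$ contains a point $P' \in \cC$. For any $Q \in \Gamma_2$ the line $P'Q$ meets $\cC$ only in $\{P',Q\}$ (B\'ezout with $\deg \cC = 2$), and meets $\ell$ in a single point not on $\cC$; the third $3$-net point $R \in \Gamma_3 \subset \cC$ on $P'Q$ would thus be forced to equal $P'$ or $Q$, contradicting disjointness of the components. Hence $\Gamma_1 \subset \ell$, and $(\Gamma_1, \Gamma_2, \Gamma_3)$ is of conic-line type.

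The main obstacle is the bookkeeping in the case analysis of Case 1 and the care needed in Case 2 to handle the degenerate configurations in which $3$-net points lie at $\cC \cap \ell$ or the connecting secant is tangent to $\cC$; each such configuration collapses immediately under B\'ezout once the hypothesis $n \ge 4$ is invoked.
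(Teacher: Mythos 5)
The paper states Lemma~\ref{20july} without proof, so there is nothing to compare your argument against line by line; judged on its own, your proof is correct and supplies exactly the kind of elementary B\'ezout-plus-net-axiom argument the authors evidently had in mind. In Case~1 your dichotomy for each line (either it meets two components and hence carries exactly one point of each, or it meets at most one component) is the right tool, and the sketched count does close: with $3n\ge 12$ points to place, any line of the first type absorbs only $3$ of them, which quickly forces all three lines to be of the second type and each component to sit on its own line, after which Lemma~\ref{lem250220111} applies. Two small imprecisions worth tidying: the identity $\sum_j a_{ij}=n$ should be $\sum_j a_{ij}\ge n$, since a point of $\Gamma_i$ at the intersection of two of the lines is counted twice (the inequality is all the counting needs); and in Case~2 the clause that $P'Q$ meets $\ell$ ``in a single point not on $\cC$'' is both unnecessary and not immediate --- the contradiction already follows from $R\in\Gamma_3\subset\cC$ together with $P'Q\cap\cC=\{P',Q\}$ and the disjointness of the components. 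Neither affects the validity of the argument.
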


\begin{proposition}
\label{tetradiedral} Every dual $3$-net realizing a dihedral group of order $2n$ with $n\ge 3$ is of tetrahedron type.
\end{proposition}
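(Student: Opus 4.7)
The plan is to exploit the normal cyclic subgroup $C_n \trianglelefteq D_n$ of index $2$ via Lemma \ref{befanabis}. Writing $D_n=C_n\cup bC_n$ and decomposing each component accordingly as $\Lambda_i=\Gamma_i\cup\Delta_i$, one obtains four dual $3$-subnets realizing $C_n$:
\[
\cS_0=(\Gamma_1,\Gamma_2,\Gamma_3),\ \cS_1=(\Gamma_1,\Delta_2,\Delta_3),\ \cS_2=(\Delta_1,\Gamma_2,\Delta_3),\ \cS_3=(\Delta_1,\Delta_2,\Gamma_3).
\]
By Proposition \ref{th5.3}, each $\cS_i$ is algebraic. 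A preliminary observation is that $(\Lambda_1,\Lambda_2,\Lambda_3)$ itself cannot be algebraic on an irreducible cubic, since Proposition \ref{algebraic1} would then embed the non-abelian group $D_n$ into the abelian group $(\cF,+)$ of the associated cubic.

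The central step is to show that each $\cS_i$ is triangular. Any two of the $\cS_i$'s share exactly one component, of size $n$. Suppose $\cS_0$ lies on an irreducible cubic $\cF_0$. If another $\cS_j$ is triangular, its shared component with $\cS_0$ lies on a line, and B\'ezout against $\cF_0$ yields $n\le 3$; if $\cS_j$ is of conic-line type, the shared component lies either on a line ($n\le 3$) or on an irreducible conic ($n\le 6$). Hence for $n\ge 7$ every $\cS_j$ must be proper algebraic, and for $n\ge 10$ B\'ezout applied pairwise forces all four to lie on the same cubic; the whole $3$-net would then be algebraic, contradicting the preliminary observation. The remaining small cases $n\in\{3,\dots,9\}$ are treated by combining the low-order classifications of Section \ref{loworder} with the rigid collinear triples $(\alpha(b),\beta(b),\gamma(e))$ forced by the $n$ involutions $b\in \Delta$ of $D_n\setminus C_n$.

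The conic-line possibility for $\cS_0$ is ruled out similarly. If $\cS_0$ is conic-line, then so is some other $\cS_j$ sharing the line component $\Gamma_3$ (by Proposition \ref{bkmth5.1} since $\Gamma_3$ lies on a line), and Proposition \ref{cltype1} forces the support lines of $\cS_0$ and $\cS_j$ to coincide for $n\ge 5$. An analysis of the involutory homologies of Proposition \ref{trihom}, which act on the conic and on the $n$ reflections in $D_n\setminus C_n$, then produces a contradiction with non-abelianness.

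With every $\cS_i$ triangular, the assembly into a tetrahedron is geometric. Proposition \ref{triangular1} says that two triangles sharing a component share both vertices on the corresponding side. As each half-set $\Gamma_1,\Delta_1,\Gamma_2,\Delta_2,\Gamma_3,\Delta_3$ occupies a side belonging to two of the four triangles, the six lines so produced must be the sides of a non-degenerate quadrangle, with $\Gamma_i$ and $\Delta_i$ on opposite sides for $i=1,2,3$; this is exactly the definition of a tetrahedron-type dual $3$-net. The principal difficulty lies in the middle step: B\'ezout alone does not suffice in the range $n\in\{3,\dots,9\}$, and one must combine the constraints coming from all four cyclic subnets with the involutory structure of the reflections in $D_n$ to close these small cases.
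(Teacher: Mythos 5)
Your overall strategy --- decompose along the index-two cyclic subgroup, show the four cyclic subnets are triangular, then glue via Proposition \ref{triangular1} --- is a reasonable and genuinely different route from the paper's, and your final assembly step is exactly how the paper finishes. However, the middle step contains a genuine gap that your own last sentence concedes: for $n\in\{3,\dots,9\}$ you have no actual argument that the four subnets $\cS_0,\dots,\cS_3$ are triangular. B\'ezout only forces two irreducible cubics sharing $n$ points to coincide when $n\ge 10$, and the low-order results of Section \ref{loworder} do not help here: they classify dual $3$-nets realizing \emph{abelian} groups of order at most $9$ (plus $Q_8$ and ${\rm Alt}_4$) as \emph{algebraic}, which is weaker than \emph{triangular} and says nothing about how the cyclic subnets of a dihedral net sit inside the plane. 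Since the statement must cover $D_3,\dots,D_9$ (dihedral groups of order $6$ through $18$), and these are precisely the orders for which Yuzvinsky-type examples on irreducible cubics of cyclic groups exist in abundance, the case you leave open is not a routine boundary case but the heart of the matter. The appeal to ``rigid collinear triples $(\alpha(b),\beta(b),\gamma(e))$'' and to ``the involutory structure of the reflections'' is a plan, not a proof. Separately, your conic-line exclusion misapplies Proposition \ref{cltype1}: that proposition assumes the two conic-line subnets share a component lying on the \emph{conic}, whereas the two subnets you compare ($\cS_0$ and the one containing $\Gamma_3$) share the component on the \emph{line}, so the cited conclusion does not follow as stated.

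For contrast, the paper's proof avoids all case analysis on $n$: it labels the net by $D_n=\langle x,y\rangle$ and exhibits, for each half-set, a family of pairs of triangles in perspective from a point of $\Lambda_1$; the Desargues theorem then forces the three diagonal points --- which are consecutive points $(y^i)_1,(y^{i+1})_1$ of a half-set together with a fixed auxiliary point --- to be collinear, and iterating this places each of the six half-sets on a line directly, for every $n\ge 3$. The non-abelian relation $yx=xy^{-1}$ is what makes these perspective triangles exist, so the dihedral structure is used constructively rather than only as an obstruction. If you want to salvage your approach, you would need to supply a genuine argument for $3\le n\le 9$; the Desargues mechanism is the natural candidate, at which point the cubic/B\'ezout machinery becomes unnecessary.
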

\begin{proof}
Let $(\Lambda_1 , \Lambda_2 , \Lambda_3 )$ be a dual $3$-net realizing a dihedral group $$D_n = \left< x,y \; | \;  x^2=y^n=1, \; yx = x y^{-1} \right>.$$
Labeling naturally the points in the components $\Lambda_i$ as indicated in Section \ref{prelim}, every $u\in D_n$ defines a triple of points $(u_1,u_2,u_3)$ where $u_i\in \Lambda_i$ for $i=1,2,3$, and viceversa. Doing so, three points $u_1\in \Lambda_1,\,v_2\in \Lambda_2,\,w_3\in\Lambda_3$ are collinear if and only if $uv=w$ holds in $D_n$.

Therefore, for $1 \leq i \leq n-2$, the triangle with vertices $x_2,(xy)_2 ,(xy^{-i})_3$ and that with vertices $(1)_3,y_3,(y^{-i})_2$ are in mutual perspective position from the point $x_1$.  For two distinct points $u_i$ and $v_j$ with $u_i\in\Lambda_i$ and $v_j\in \Lambda_j$ and $1\leq i,j\leq 3$, let $\overline{u_iv_j}$ denote the line through $u_i$ and $v_j$. From the Desargues theorem, the three diagonal points, that is, the points
\begin{eqnarray*}
U&=&\overline{(x)_2 (xy)_2} \cap \overline{(1)_3 (y)_3}, \\ (y^i)_1&=&\overline{(x)_2 (xy^{-i})_3} \cap \overline{ (y^{-i})_2 (1)_3}, \\ (y^{i+1})_1&=&\overline{(xy)_2 (xy^{-i})_3} \cap \overline{( y^{-i})_2 (y)_3},
\end{eqnarray*}
are collinear. Hence, a line $\ell_1$ contains each point $(1)_1, (y)_1 \ldots, (y^{n-1})_1$ in $\Lambda_1$, that is,
$$ (1)_1, (y)_1 \ldots, (y^{n-1})_1\in \ell_1.$$

There are some more useful Desargues configurations. Indeed, the pairs of triangles with vertices
\begin{eqnarray*}
(x)_2,(xy^{-1})_2,(y^{-i-1})_3     & \text{and} &  (xy)_3,(x)_3,(y^{-i})_2 ; \\
(y^i)_2, (y^{i+1})_2, (y^{i+1})_3  & \text{and} &   (x)_3 , (xy)_3 , (xy)_2;\\
(xy^i)_2,(xy^{i+1})_2,(y^i)_3      & \text{and} &  (x)_3,(xy)_3,(1)_2 ;\\
(1)_2,(y)_2, (x)_3                 & \text{and} &   (y^i)_3,(y^{i+1})_3, (xy^i)_2 ;\\
(x)_2,(xy)_2, (1)_3                & \text{and} &  (xy^i)_3,(xy^{i+1})_3, (y^i)_2
\end{eqnarray*}
are in mutual perspective position from the points $$(y^{-1})_1, (xy^{-i})_1,(y^i)_1, (y^i)_1 , (y^{-i})_1 , $$ respectively. Therefore, there exist five more lines $m_1, \ell_2, m_2, \ell_3, m_3$ such that \begin{eqnarray*}
 \{ (x)_1, (xy)_1, \ldots , (xy^{n-1})_1 \}   \subset  m_1, &
 \{ (1)_2, (y)_2, \ldots , ( y^{n-1})_2 \}   \subset  \ell_2, \\
\{ (x)_2, (xy)_2, \ldots , ( xy^{n-1})_2 \}   \subset  m_2 , &
\{ (1)_3, (y)_3, \ldots , ( y^{n-1})_3 \}   \subset  \ell_3, \\
\{ (x)_3, (xy)_3, \ldots , ( xy^{n-1})_3 \}   \subset  m_3 . &
\end{eqnarray*}
By Proposition \ref{triangular1}, the lines $\ell_1,\ldots, m_3$ are the sides of a nondegenarate quadrangle, which shows that the dual $3$-net $(\Lambda_1,\Lambda_2,\Lambda_3)$ is of tetrahedron type.
\end{proof}
%%modifica 15 giugno 2011
\begin{rem} {}From Proposition \ref{tetradiedral}, the dual $3$-nets given in \cite[Section 6.2]{per} are of tetrahedron type.
\end{rem}
\begin{proposition}
\label{cortetra0} Let $G$ be a finite group containing a normal subgroup $H$ of order $n\geq 3$. Assume that $G$ can be realized by a dual $3$-net $(\Lambda_1,\Lambda_2,\Lambda_3)$ and that every dual $3$-subnet of $(\Lambda_1,\Lambda_2,\Lambda_3)$ realizing $H$ as a subgroup of $G$ is triangular.
Then $H$ is cyclic and $(\Lambda_1,\Lambda_2,\Lambda_3)$ is either triangular or of tetrahedron type.
\end{proposition}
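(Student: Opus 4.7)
The plan is to deduce both parts of the statement by combining the many triangular sub-3-nets supplied by Lemma \ref{befanabis} with the rigidity coming from Proposition \ref{triangular1}, together with the structural Propositions \ref{triangular}, \ref{bkmth5.1}, \ref{cltype2} and Lemma \ref{lem250220111}.

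I would begin by noting that, under the hypothesis, any one of the triangular sub-3-nets produced by Lemma \ref{befanabis} realizes $H$, and then Proposition \ref{triangular} yields immediately that $H$ is cyclic; this disposes of the group-theoretic assertion. Set $k=[G:H]$. The case $k=1$ is trivial, since the unique subnet coincides with the whole 3-net. So assume $k\geq 2$ and, for each coset $g_aH$, write $\ell_i^{(a)}$ for the line carrying the image of $g_aH$ in $\Lambda_i$ (Remark \ref{rem1}). The subnet indexed by $(a,b)$ has sides $\ell_1^{(a)},\ell_2^{(b)},\ell_3^{(c)}$ with $g_cH=g_ag_bH$. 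Applying Proposition \ref{triangular1} to every pair of such subnets sharing the $\Lambda_1$-component on $\ell_1^{(a)}$, the pair of triangle-vertices on $\ell_1^{(a)}$ is a fixed unordered pair $\{P_a,Q_a\}\subset\ell_1^{(a)}$ independent of $b$; symmetric statements hold on every $\ell_2^{(b)}$ and every $\ell_3^{(c)}$. In particular, every $\ell_2^{(b)}$ and every $\ell_3^{(c)}$ meets $\ell_1^{(a)}$ in one of the two points $P_a,Q_a$.

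The heart of the proof is then a geometric dichotomy. If $\Lambda_1$ happens to lie on a single line, Proposition \ref{bkmth5.1} gives that $(\Lambda_1,\Lambda_2,\Lambda_3)$ is triangular or of conic-line type; but conic-line realizes a cyclic group of order $n$ by Proposition \ref{cltype2}, and then any triangular sub-3-net of order $n$ would force a coset in $\Lambda_1$ to sit on both a line and the irreducible conic of the conic-line type, meeting in at most two points, contradicting $n\geq 3$. Hence $(\Lambda_1,\Lambda_2,\Lambda_3)$ is triangular. Otherwise there exist two distinct $\ell_1^{(a)},\ell_1^{(a')}$; each $\ell_2^{(b)}$ is then one of the (at most) four lines joining a vertex in $\{P_a,Q_a\}$ to a vertex in $\{P_{a'},Q_{a'}\}$, and similarly for each $\ell_3^{(c)}$. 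For $k=2$ there are exactly six candidate lines $\ell_1,m_1,\ell_2,m_2,\ell_3,m_3$ and the six consistency relations from Proposition \ref{triangular1}, one for each pair of subnets sharing a common component, reduce to a binary ``swapped versus concurrent'' choice of vertex pairing on each side. I would show that any single ``concurrent'' pairing, propagated through the remaining five relations, collapses the six lines into a common pencil through one point, excluded by Lemma \ref{lem250220111}; hence all six pairings are ``swapped'' and the six lines form the sides of a non-degenerate complete quadrangle with $\Lambda_i$ on a pair of opposite sides, giving the tetrahedron type. For $k\geq 3$ I would re-apply Proposition \ref{triangular1} to subnets sharing a $\Lambda_2$-component and use pigeonhole on the $k\geq 3$ intersections $\ell_1^{(a)}\cap\ell_2^{(b)}\in\{P_b',Q_b'\}$ to force two distinct $\ell_1^{(a)}$'s to meet at a single vertex on every $\ell_2^{(b)}$; iterating and combining with the symmetric argument on $\ell_3^{(c)}$, the pair $\{P_a,Q_a\}$ becomes $a$-independent, so all $\ell_1^{(a)}$ share two common points and therefore coincide, reducing to the triangular sub-case.

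The principal obstacle is the $k=2$ case analysis: the six relations provided by Proposition \ref{triangular1} interact in a subtle way, and I expect most of the work to go into checking that no mixed configuration of ``concurrent'' and ``swapped'' pairings survives, and that every such mix really propagates into a pencil ruled out by Lemma \ref{lem250220111}, leaving only the fully swapped quadrangle. The case $k\geq 3$ is conceptually similar but needs careful bookkeeping of which $\ell_i^{(a)}$'s might incidentally coincide before the pigeonhole is applied.
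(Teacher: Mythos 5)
Your opening moves match the paper's: $H$ cyclic via Proposition \ref{triangular}, and Proposition \ref{triangular1} pinning a fixed vertex pair $\{P_a,Q_a\}$ on each line $\ell_1^{(a)}$. But your endgame contains a genuine error: the claim that for $k\geq 3$ everything collapses to the triangular case is false. Take $G$ dihedral of order $2n'$ realized by a tetrahedron-type dual $3$-net and let $H$ be a normal cyclic subgroup of order $n$ with $3\leq n<n'$ (e.g. $G=D_6$ of order $12$, $H=C_3$, so $k=4$). Every $H$-member lies on a side of the associated quadrangle, every subnet realizing $H$ is triangular (Lemma \ref{lem250220111} rules out the pencil case), so the hypotheses of the proposition hold with $k\geq 3$ — yet the net is of tetrahedron type, not triangular, and the lines $\ell_1^{(a)}$ do not coincide. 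The flaw in your pigeonhole is that distinct cosets $g_aH$ need not give distinct lines $\ell_1^{(a)}$: several $H$-members can share a side, so forcing two \emph{indices} $a,a'$ whose lines pass through the same vertex of $\ell_2^{(b)}$ does not force two \emph{distinct lines} through that vertex, and the propagation to ``$\{P_a,Q_a\}$ is $a$-independent'' breaks down. Relatedly, the special treatment of $k=2$ is unnecessary and its six-relation case analysis is only sketched, with the hard part explicitly deferred.

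The paper's proof avoids the split on $k$ altogether. It fixes one $H$-member $\Gamma_1\subset\Lambda_1$ with vertex pair $\{P,Q\}$ and compares with a second one with pair $\{P_i,Q_i\}$, casing on $|\{P,Q\}\cap\{P_i,Q_i\}|$: if the pairs coincide the two lines are equal; if they share exactly one vertex, all $H$-members of $\Lambda_2$ or of $\Lambda_3$ are forced onto the single line $QQ_i$, and Proposition \ref{bkmth5.1} (with conic-line excluded because $\Lambda_1$ has three collinear points) gives the triangular case; if the pairs are disjoint, the $H$-members of $\Lambda_2\cup\Lambda_3$ distribute over the four lines $PP_i,PQ_i,QP_i,QQ_i$, and repeating the argument with an $H$-member of $\Lambda_2$ in the role of $\Gamma_1$ shows the $\Lambda_1$-members lie on the two lines $PQ$ and $P_iQ_i$, yielding the tetrahedron type. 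You should replace your $k$-based dichotomy with this intersection-pattern dichotomy (or otherwise account for repeated lines among the $\ell_1^{(a)}$) before the argument can be considered correct.
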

\begin{proof} From Proposition \ref{triangular}, $H$ is cyclic.
%Obviously, $G$ has order $kn=2mn$ with $n\geq 3$ and $m\geq 2$ where the unique cyclic subgroup $H$ of $D$ of index $2$ has order $n$.
%{}From Proposition \ref{tetradiedral}, every dual $3$-subnet %$(\Gamma_1^i,\Gamma_2^j,\Gamma_3^s)$
%of $(\Lambda_1,\Lambda_2,\Lambda_3)$ which realizes $H$ as a subgroup of $G$ is triangular.
Fix an $H$-member $\Gamma_1$ from $\Lambda_1$, and denote by $\ell_1$ the line containing $\Gamma_1$. Consider all the triangles which contain some dual $3$-net $(\Gamma_1,\Gamma_2^j,\Gamma_3^s)$ realizing $H$ as a subgroup of $G$. From Proposition \ref{triangular1}, these triangles have two common vertices, say $P$ and $Q$, lying on $\ell_1$. For the third vertex $R_j$ of the triangle containing $(\Gamma_1,\Gamma_2^j,\Gamma_3^s)$ there are two possibilities, namely either the side $PR_j$ contains $\Gamma_2^j$ and the side $QR_j$ contains $\Gamma_3^s$, or viceversa. Therefore, every $H$-member $\Gamma_2^j$ from $\Lambda_2$ (as well as every $H$-member $\Gamma_3^s$ from $\Lambda_3$) is contained in a line passing through $P$ or $Q$.

Now, replace $\Gamma_1$ by another $H$-orbit $\Gamma_1^i$ lying in $\Lambda_1$ and repeat the above argument. If $\ell_i$ is the line containing $\Gamma_1^i$ and $P_i,Q_i$ denote the vertices then again every $H$-member $\Gamma_2^j$ from $\Lambda_2$ (as well as every $H$-member $\Gamma_3^s$ from $\Lambda_3$) is contained in a line passing through $P_i$ or $Q_i$.

Assume that $\{P,Q\}\neq \{P_i,Q_i\}$. If one of the vertices arising from $\Gamma_1$, say $P$, coincides with one of the vertices, say $P_i$, arising  from $\Gamma_1^i$ then the line $QQ_i$ must contain either $\Gamma_2^j$ or $\Gamma_3^s$ from each  $(\Gamma_1,\Gamma_2^j,\Gamma_3^s)$. Therefore, the line $QQ_i$ must contain every $H$-member from $\Lambda_2$, or every $H$-member from $\Lambda_3$. Hence $\Lambda_2$ or $\Lambda_3$ lies on the line $QQ_i$. From Proposition \ref{bkmth5.1}, $(\Lambda_1,\Lambda_2,\Lambda_3)$ is either triangular or conic-line type. The latter case cannot actually occur as $\Lambda_1$ contains $\Gamma_1$ and hence it contains at least three collinear points.

Therefore  $\{P,Q\}\cap \{P_i,Q_i\}=\emptyset$ may be assumed. Then the $H$-members from $\Lambda_2$ and $\Lambda_3$ lie on four lines,
namely $PP_i,PQ_i,QP_i,QQ_i$. Observe that these lines may be assumed to be pairwise distinct, otherwise  $\Lambda_2$ (or $\Lambda_3$) is contained in a line, and again $(\Lambda_1,\Lambda_2,\Lambda_3)$ is triangular. Therefore, half of the $H$-members from $\Lambda_2$ lie on one of these four lines, say $PQ_i$, and half of them on $QP_i$. Similarly, each of the lines $PP_i$ and $QQ_i$ contain half from the $H$-members from $\Lambda_3$.

In the above argument, any $H$-member $\Gamma_2$ from $\Lambda_2$ may play the role of $\Gamma_1$. Therefore there exist two lines such that each $H$-member from $\Lambda_1$ lies on one or on other line. Actually, these two lines are $PQ$ and $P_iQ_i$ since each of them contains a $H$-member from $\Lambda_1$. In this case, $(\Lambda_1,\Lambda_2,\Lambda_3)$ is of tetrahedron type.
\end{proof}
Since a dihedral group of order $\geq 8$ has a unique cyclic subgroup of index $2$ and such a subgroup is characteristic, Propositions \ref{cortetra0} and \ref{tetra} have the following corollary.
\begin{proposition}
\label{cortetra} Let $G$ be a finite group of order $n\geq 12$ containing a normal dihedral subgroup $D$.
If $G$ is realized by a dual $3$-net then $G$ is itself dihedral.
\end{proposition}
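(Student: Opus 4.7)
The plan is to apply Proposition~\ref{cortetra0} with $H$ taken to be the unique cyclic subgroup $C$ of index $2$ inside $D$. By the remark cited just before the statement, $C$ is characteristic in the dihedral group $D$; combined with $D\triangleleft G$ this yields $C\triangleleft G$. Moreover $|C|=|D|/2\geq 3$, so the order requirement of Proposition~\ref{cortetra0} is in place. The only nontrivial thing to check is the triangularity hypothesis: every dual $3$-subnet of $(\Lambda_1,\Lambda_2,\Lambda_3)$ realizing $C$ as a subgroup of $G$ must be triangular.

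To verify this, I would exploit the nesting $C\leq D\leq G$: every coset of $C$ in $G$ is contained in a unique coset of $D$ in $G$, and each $D$-coset splits into exactly two $C$-cosets. By Lemma~\ref{befanabis} applied to both $C$ and $D$, this nesting of cosets translates into a nesting of dual $3$-subnets, i.e., every dual $3$-subnet realizing $C$ as a subgroup of $G$ is contained in a dual $3$-subnet realizing $D$ as a subgroup of $G$. By Proposition~\ref{tetradiedral}, each such $D$-subnet is of tetrahedron type, and from the structure discussed in Section~\ref{triangular3net} its four triangular half-set subnets are precisely the dual $3$-subnets realizing the characteristic index-$2$ cyclic subgroup $C$. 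Consequently each $C$-subnet of $(\Lambda_1,\Lambda_2,\Lambda_3)$ is one of these triangular halves, and in particular is triangular.

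Proposition~\ref{cortetra0} then forces $(\Lambda_1,\Lambda_2,\Lambda_3)$ itself to be either triangular or of tetrahedron type. The triangular alternative is excluded by Proposition~\ref{triangular}, which would make $G$ cyclic and contradict the presence of the non-abelian subgroup $D$. Hence $(\Lambda_1,\Lambda_2,\Lambda_3)$ is of tetrahedron type, and Proposition~\ref{tetra} concludes that $G$ is dihedral. The main obstacle is the middle step, namely matching $C$-subnets of $(\Lambda_1,\Lambda_2,\Lambda_3)$ with triangular halves of the ambient tetrahedron-type $D$-subnets; what makes this identification canonical (and not dependent on an arbitrary choice of index-$2$ subgroup) is precisely the fact that $C$ is characteristic in $D$, so the coset decomposition of $D$ by $C$ agrees with the intrinsic decomposition of a tetrahedron-type net into its four triangular halves.
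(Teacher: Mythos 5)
Your proof is correct and follows essentially the same route as the paper, which presents this proposition as an immediate corollary of Propositions \ref{cortetra0} and \ref{tetra} via the observation that the cyclic index-$2$ subgroup of $D$ is characteristic (hence normal in $G$), with the required triangularity of the $C$-subnets coming from Proposition \ref{tetradiedral} applied to the tetrahedron-type $D$-subnets. You have simply made explicit the nesting of $C$-subnets inside $D$-subnets that the paper leaves implicit.
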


\section{Dual $3$-nets preserved by projectivities}
\label{seccolli}
\begin{proposition}
\label{charinv} Let $(\Lambda_1,\Lambda_2,\Lambda_3)$ be a dual $3$-net of order $n\geq 4$ realizing a group $G$. If every point in $\Lambda_1$ is the center of an involutory homology which preserves $\Lambda_1$ while interchanges $\Lambda_2$ with $\Lambda_3$, then either $\Lambda_1$ is contained in a line, or $n=9$. In the latter case, $(\Lambda_1,\Lambda_2,\Lambda_3)$ lies on a non-singular cubic $\cF$ whose inflection points are the points in $\Lambda_1$.
\end{proposition}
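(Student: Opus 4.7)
The plan is to assume that $\Lambda_1$ is not contained in a line and then to prove $n=9$, by identifying $\Lambda_1$ with the nine inflection points of a nonsingular plane cubic. A preliminary observation, used throughout, is that for every $P\in\Lambda_1$ the axis $\ell_P$ of $\varphi_P$ meets neither $\Lambda_2$ nor $\Lambda_3$: a point of $\Lambda_2\cap\ell_P$ would be fixed by $\varphi_P$, yet $\varphi_P$ sends it into the disjoint set $\Lambda_3$. By the same mechanism, no line carrying two points of $\Lambda_1$ meets $\Lambda_2\cup\Lambda_3$, for it would cross two components in too many points to be compatible with the dual $3$-net axioms.

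The heart of the argument is to show that every line through two distinct points of $\Lambda_1$ contains a third point of $\Lambda_1$. If $Q\notin\ell_P$ then $\varphi_P(Q)\in\Lambda_1$ is the required third point on $\overline{PQ}$, and symmetrically if $P\notin\ell_Q$. The only delicate situation is the polar one $Q\in\ell_P$ and $P\in\ell_Q$; here $\varphi_P$ and $\varphi_Q$ commute and their product $\chi:=\varphi_P\varphi_Q$ is the involutory homology with centre $C=\ell_P\cap\ell_Q$ and axis $\overline{PQ}$. Since neither $C$ nor $\overline{PQ}$ meets $\Lambda_2\cup\Lambda_3$, the involution $\chi$ acts freely on $\Lambda_2$, forcing $n=|\Lambda_2|$ to be even. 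I then plan to exploit this parity, together with the interplay between the homologies $\varphi_P$, $\varphi_Q$ and $\chi$ on $\Lambda_1\cap\overline{PQ}$, to deduce $|\Lambda_1\cap\overline{PQ}|\ge 4$. This polar-case analysis is the main obstacle: one must carefully combine the parity constraint with the action of the group generated by the various $\varphi_R$ to preclude any ordinary secant of $\Lambda_1$.

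Once it is established that $\Lambda_1$ is a non-collinear finite set with no ordinary secant, Kelly's theorem (in characteristic $0$, and applicable in our setup with $p>n$) will place $\Lambda_1$ on a plane cubic $\cF$. Reducible $\cF$ is ruled out: pigeonhole on the line components, combined with the axis-avoidance step, would confine $\Lambda_1$ to a single line, against the standing assumption. An irreducible singular $\cF$ is ruled out because it carries at most three inflection points, whereas every $P\in\Lambda_1$ is forced to be an inflection point of $\cF$. This last assertion is the classical fact that an involutory homology centred at a smooth point $P$ of an irreducible plane cubic preserves the cubic precisely when $P$ is an inflection point, with axis equal to the harmonic polar of $P$. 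Consequently $\cF$ is nonsingular, and since a nonsingular plane cubic has exactly nine inflection points, $n=9$ and $\Lambda_1$ coincides with their set, completing the identification.
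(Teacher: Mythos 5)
Your proposal takes a purely configurational (Sylvester--Gallai) route, whereas the paper's proof is group-theoretic: it forms the projectivity group $\Phi$ generated by the products $\varphi_{a'}\varphi_a$, shows $\Phi$ acts faithfully and sharply transitively on $\Lambda_2$ (hence $\Phi\cong G$ and $G$ is abelian), and then reads the dichotomy off the fixed-point structure of the abelian group $\Phi$: if $\Phi$ contains a homology, or is cyclic, the centers of all $\varphi_a$ are forced onto a common line; otherwise $\Phi\cong C_3\times C_3$, $n=9$, and $\Lambda_1$ is the set of base points of a Hesse pencil, after which the cubic of the pencil through one point of $\Lambda_2$ swallows the whole $3$-net. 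Your opening moves are sound (the axis $\ell_P$ avoids $\Lambda_2\cup\Lambda_3$; if $Q\notin\ell_P$ then $\varphi_P(Q)$ is a third point of $\Lambda_1$ on $\overline{PQ}$), but the argument has genuine gaps beyond that point.

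First, the polar case $Q\in\ell_P$, $P\in\ell_Q$ is not resolved: you correctly get an involution $\chi=\varphi_P\varphi_Q$ acting freely on $\Lambda_2$ and hence $2\mid n$, but you only announce a plan to convert this parity into $|\Lambda_1\cap\overline{PQ}|\ge 3$; no argument is given, and without it $\Lambda_1$ need not be a Sylvester--Gallai set. Second, even granting that, the appeal to Kelly's theorem does not do what you need: Kelly's theorem states that a complex Sylvester--Gallai configuration in higher-dimensional space is \emph{coplanar}; it does not assert that a planar one lies on a cubic, and no classification of planar complex SG configurations is being invoked correctly here (nor does such a complex-analytic result transfer automatically to characteristic $p>n$, where the paper's statement must also hold). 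Third, the proposition asserts that the whole of $(\Lambda_1,\Lambda_2,\Lambda_3)$, not just $\Lambda_1$, lies on a non-singular cubic; your argument stops at identifying $\Lambda_1$ with the nine inflection points and never places $\Lambda_2$ and $\Lambda_3$ on $\cF$ (the paper does this by choosing the member of the Hesse pencil through one point of $\Lambda_2$ and using that its $\Psi$-orbit is $\Lambda_2\cup\Lambda_3$). As it stands the proposal is an interesting alternative strategy, but it is not a proof.
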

\begin{proof} After labeling $(\Lambda_1,\Lambda_2,\Lambda_3)$ naturally, take an element $a\in G$ and denote by $\varphi_a$ the (unique) involutory homology of center $A_1$ which maps $\Lambda_2$ onto $\Lambda_3$. Obviously, $\varphi_a$ also maps $\Lambda_3$ onto $\Lambda_2$. Moreover,
$\varphi_a(X_2)=Y_3 \Longleftrightarrow a\cdot x=y,$ that is, $\varphi_a(X_2)=\varphi_{a'}(X_2') \Longleftrightarrow a\cdot x=a'\cdot x',$  where $G=(G,\cdot)$. Therefore,
\begin{equation}
\label{eq027febbr}
\varphi_{a'}\varphi_a(X_2)=X_2' \Longleftrightarrow ({a'}^{-1}\cdot a)\cdot x =x'.
\end{equation}
{}From this, for any $b\in G$ there exists $b'\in G$ such that
\begin{equation}
\label{eq27febbr}
\varphi_{a'}\varphi_{a}(X_2)=\varphi_{b'}\varphi_{b}(X_2)
\end{equation}
for every $X_2\in \Lambda_2$, equivalently, for every $x\in G$.

Let $\Phi$ be the the projectivity group generated by all products $\varphi_{a'}\varphi_a$ where both $a,a'$ range over $G$. Obviously, $\Phi$ leaves both $\Lambda_2$ and $\Lambda_3$ invariant. In particular, $\Phi$ induces a permutation group on $\Lambda_2$. We show that if $\mu\in \Phi$ fixes $\Lambda_2$ pointwise then $\mu$ is trivial. Since $n>3$, the projectivity $\mu$ has at least four fixed points in $PG(2,\mathbb{K})$. Therefore, $\mu$ is either trivial, or a homology. Assume that $\mu$ is non-trivial, and let $C$ be  the center and $c$ the axis of $\mu$. Take a line $\ell$ through $C$ that contains a point $P\in\Lambda_3$, and assume that $C$ is a point in $\Lambda_2$. Then $P$ is the unique common point of $\ell$ and $\Lambda_3$. Since $\mu$ preserves $\Lambda_2$,  $\mu$ must fix $P$. Therefore, $\mu$ fixes $\Lambda_3$ pointwise, and hence $\Lambda_3$ is contained in $c$. But then $\mu$ cannot fix
 any point in $\Lambda_2$ other than $C$ since the definition of a dual $3$-net implies that $c$ is disjoint from $\Lambda_2$. This contradiction means that $\mu$ is trivial, that is, $\Phi$ acts faithfully on $\Lambda_2$.

Therefore, (\ref{eq27febbr}) states that for any $a,a',b\in G$ there exists $b'\in G$ satisfying the equation  $\varphi_{a'}\varphi_{a}=\varphi_{b'}\varphi_{b}$. This yields that $\Phi$ is an abelian group of order $n$ acting on $\Lambda_2$ as a sharply transitive permutation group. Also, $$\Phi=\{\varphi_a\varphi_e\mid a\in G\}$$ where $e$ is the identity of $G$. Therefore, $\Phi\cong G$, and $G$ is abelian.

Let $\Psi$ be the projectivity group generated by $\Phi$ together with some $\varphi_a$ where $a\in G$. Then $|\Psi|=2n$ and $\Psi$ comprises the elements in $\Phi$ and the involutory homologies $\varphi_a$ with $a$ ranging over $G$. Obviously, $\Psi$ interchanges $\Lambda_2$ and $\Lambda_3$ while it leaves $\Lambda_1$ invariant acting on $\Lambda_1$ as a transitive permutation group.

 Two cases are investigated according as $\Phi$ contains a homology or does not. Observe that $\Phi$ contains no elation, since every elation has infinite order when $p=0$ while it's order is at least $p$ when $p>0$ but $p>n$ is assumed throughout the paper.

 In the former case, let $\rho\in\Phi$ be a homology with center $C\in \Lambda_1$ and axis $c$. Since $\rho$ commutes with every element in $\Phi$, the point $C$ is fixed by $\Phi$, and the line is preserved by $\Phi$. Assume that $C$ is also the center of $\phi_a$ with some $a\in G$. The group of homologies generated by $\phi_a$ and $\rho$ preserves every line through $C$ and it has order bigger than $2$. But then it cannot interchange $\Lambda_2$ with $\Lambda_3$. Therefore, the center of every $\phi_a$ with $a\in G$ lies on $c$. This shows that $\Lambda_1$ is contained in $c$.

In the case where $\Phi$ contains no homology, $\Phi$ has odd order and  $\delta\in\Phi$ has three fixed points which are the vertices  of a triangle $\Delta$. Since $\delta$ commutes with every element in $\Phi$, the triangle $\Delta$ is left invariant by $\Phi$.

If $\Phi$ fixes each vertex of $\delta$, then $\Phi$ must be cyclic otherwise $\Psi$ would contain a homology. Therefore $\Psi$ is a dihedral group, and we show that $\Lambda_1$ is contained in a line. For this purpose, take a generator $\rho=\varphi_a\varphi_b$ of $\Phi$, and consider the line $\ell$ through the centers of $\varphi_a$ and $\varphi_b$. Obviously, $\rho$ preserves $\ell$, and this holds true for every power of $\rho$. Hence $\Psi$ also preserves $\ell$. Since every $\varphi_c$ is conjugate to $\varphi_a$ under $\Psi$, this shows that the center of $\varphi_c$ must lie on $\ell$, as well. Therefore $\Lambda_1$ is contained in $\ell$.

We may assume that some $\rho\in\Phi$ acts on the vertices of $\Delta$ as a $3$-cycle. Let $\Delta'$ be the triangle whose vertices are the fixed points of $\rho$. Then $\rho^3=1$ since $\rho^3$ fixes not only the vertices of $\Delta'$ but also those of $\Delta'$. Therefore $\Phi=\langle \rho\rangle\times \Theta$ where $\Theta$ is the cyclic subgroup of $\Phi$ fixing each vertex of $\Delta$. A subgroup of $\Theta$ of index $\leq 3$ fixes each vertex of $\Delta'$, and hence it is trivial. Therefore, $|\Theta|=3$ and  $\Phi\cong C_3\times C_3$. This shows that $n=9$ and if $\Lambda_1$ is not contained in a line then the configuration of their points, that is the the centers of the homologies in $\Psi$, is isomorphic to $AG(2,3)$, the affine plane of order $3$. Such a configuration can also be viewed as  the set of the nine common inflection points of the non-singular plane cubics of a pencil $\mathcal P$, each cubic left invariant by $\Psi$. For a point $P_2\in \Lambda_2$, take that cubic $\cF$ in $\mathcal P$ that contains $P_2$. Since the orbit of $P_2$ under the action of $\Psi$ consists of the points in $\Lambda_2\cup \Lambda_3$, it follows that $\cF$ contains each point of $(\Lambda_1,\Lambda_,\Lambda_3)$.
\end{proof}

A corollary of Proposition \ref{charinv} is the following result.
\begin{proposition}
\label{charinvbis} Let $(\Lambda_1,\Lambda_2,\Lambda_3)$ be a dual $3$-net of order $n\geq 4$ realizing a group $G$. If every point of $(\Lambda_1,\Lambda_2,\Lambda_3)$ is the center of an involutory homology which preserves $(\Lambda_1,\Lambda_2,\Lambda_3)$, then   $(\Lambda_1,\Lambda_2,\Lambda_3)$ is triangular.
\end{proposition}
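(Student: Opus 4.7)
The plan is to reduce to Proposition \ref{charinv} applied to each of the three components, once we verify that its hypothesis follows from the weaker one given here.

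First I would show: for every point $P\in \Lambda_i$, the involutory homology $\varphi_P$ centered at $P$ must interchange the two components other than $\Lambda_i$. Fix $P\in \Lambda_1$. Since $\varphi_P$ permutes $\{\Lambda_1,\Lambda_2,\Lambda_3\}$ and fixes $P$, it fixes $\Lambda_1$ setwise; the only remaining possibility besides the desired swap is that $\varphi_P$ preserves both $\Lambda_2$ and $\Lambda_3$ setwise, and this I would rule out as follows. For any $B_2\in \Lambda_2$, the $3$-net axiom gives a unique $C_3\in \Lambda_3$ collinear with $P$ and $B_2$; the line $PB_2C_3$ passes through the center of $\varphi_P$, hence is $\varphi_P$-invariant, and by the $3$-net axiom meets $\Lambda_2$ only at $B_2$, so $\varphi_P(B_2)=B_2$, and likewise $\varphi_P(C_3)=C_3$. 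Therefore $\Lambda_2\cup\Lambda_3$ would lie on the axis of $\varphi_P$; but then the line $PB_2C_3$ already contains the two distinct axis-points $B_2\neq C_3$ (components being disjoint), so it coincides with the axis and places $P$ on its own axis, a contradiction.

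Next, Proposition \ref{charinv} applies to each component, yielding for every $i\in\{1,2,3\}$ that either $\Lambda_i$ lies on a line, or $n=9$ and the entire dual $3$-net lies on a non-singular plane cubic $\cF_i$ whose nine inflection points form exactly $\Lambda_i$. If all three $\Lambda_i$ are collinear, then the dual $3$-net is regular, and Lemma \ref{lem250220111} gives that it is triangular, which is the desired conclusion.

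The remaining case is where some component, say $\Lambda_1$, is not contained in a line, so $n=9$ and the $3$-net sits on a non-singular cubic $\cF_1$. Since $\cF_1$ meets any line in at most three points by B\'ezout, neither $\Lambda_2$ nor $\Lambda_3$ (each of size $9$) can be collinear; applying Proposition \ref{charinv} to $\Lambda_2$ yields a non-singular cubic $\cF_2$ containing all $27$ points of the $3$-net, with inflection set $\Lambda_2$. If $\cF_1=\cF_2$, the two disjoint sets $\Lambda_1$ and $\Lambda_2$ would both equal the unique nine-point inflection set of this cubic, a contradiction; and if $\cF_1\neq \cF_2$, B\'ezout gives $|\cF_1\cap \cF_2|\leq 9$, whereas $\cF_1\cap \cF_2$ must contain all $27$ points of the $3$-net, again a contradiction. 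The only conceptually delicate point I anticipate is the first step, ruling out that $\varphi_P$ fixes both other components setwise; once that is in place, the conclusion follows quickly from Proposition \ref{charinv}, Lemma \ref{lem250220111} and B\'ezout.
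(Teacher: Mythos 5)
Your proof is correct and follows essentially the same route as the paper: reduce to Proposition \ref{charinv} and kill the $n=9$ alternative because a non-singular cubic cannot carry $27$ inflection points. The only differences are that you supply the (correct, and tacitly assumed by the paper) verification that each homology must interchange the two components not containing its center, and that you finish via Lemma \ref{lem250220111} on regular nets rather than by excluding the conic-line case through Proposition \ref{cltype} and Example \ref{exconicline}.
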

\begin{proof} {}From Proposition \ref{cltype} and Example \ref{exconicline}, $(\Lambda_1,\Lambda_2,\Lambda_3)$ is not of conic-line type. For $n=9$, $(\Lambda_1,\Lambda_2,\Lambda_3)$ does not lie on any non-singular cubic $\cF$ since no non-singular cubic has twenty-seven inflection points. Therefore the assertion follows
from Proposition \ref{charinv}.
\end{proof}
A useful generalization of Proposition \ref{charinvbis} is given in the proposition below.
\begin{proposition}
\label{28febbrbis} Let $(\Lambda_1,\Lambda_2,\Lambda_3)$ be a dual $3$-net of order $n\geq 4$ realizing a group $G$. Let $\cU$ be the set of all involutory homologies preserving $(\Lambda_1,\Lambda_2,\Lambda_3)$ whose centers are points of $(\Lambda_1,\Lambda_2,\Lambda_3)$. If $|\cU|\ge 3$ and $\cU$ contains two elements whose centers lie in different components, then the following assertions hold:
\begin{itemize}
\item[\rm(i)] every component contains the same number of points that are centers of involutory homologies in $\cU$.
\item[\rm(ii)] the points of $(\Lambda_1,\Lambda_2,\Lambda_3)$ which are centers of involutory homologies in $\cU$ form a triangular dual $3$-subnet $(\Gamma_1,\Gamma_2,\Gamma_3)$.
\item[\rm(iii)] Let $M$ be the cyclic subgroup associated to $(\Gamma_1,\Gamma_2,\Gamma_3)$.
Then either $(\Lambda_1,\Lambda_2,\Lambda_3)$ is also triangular, or
$$ |G|<\left\{
\begin{array}{lll}
\,\,\,|G:M|^2,\,\,\,{\rm{when\,\, gcd.}}(3,|G|)=1;\\
3|G:M|^2, \,\,\, {\rm{when\,\, gcd.}}(3,|G|)=3.
\end{array}
\right.
$$
\end{itemize}
\end{proposition}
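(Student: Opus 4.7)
The plan is to analyze how the involutions in $\cU$ permute the components, use conjugation to identify a triangular subnet spanned by their centers, and then exploit the rigidity of this subnet via Proposition \ref{trihomhom+} to bound $|G|$. Any $\varphi\in\cU$ with center $A\in\Lambda_i$ fixes $A$ and permutes $\{\Lambda_1,\Lambda_2,\Lambda_3\}$, so it preserves $\Lambda_i$ and swaps the remaining two components. Pick $\varphi_A,\psi_B\in\cU$ with $A\in\Lambda_1$, $B\in\Lambda_2$, which exist by hypothesis. Then the conjugate $\varphi_A\psi_B\varphi_A^{-1}$ is an element of $\cU$ with center $\varphi_A(B)\in\Lambda_3$, so conjugation by $\varphi_A$ induces a bijection $\Gamma_2\leftrightarrow\Gamma_3$; the analogous statement for $\psi_B$ yields $\Gamma_1\leftrightarrow\Gamma_3$, proving (i). For (ii), since $\varphi_A$ is a homology with center $A$, it preserves the line $AB$ and sends $B\in\Lambda_2$ to the unique point $C$ of $\Lambda_3$ on that line; by the previous observation $C=\varphi_A(B)\in\Gamma_3$. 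Hence $(\Gamma_1,\Gamma_2,\Gamma_3)$ is a dual $3$-subnet of $(\Lambda_1,\Lambda_2,\Lambda_3)$, and every one of its points is the center of an involutory homology in $\cU$ preserving the subnet, so Proposition \ref{charinvbis} forces $(\Gamma_1,\Gamma_2,\Gamma_3)$ to be triangular.

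For (iii), let $\Delta$ be the triangle associated to $(\Gamma_1,\Gamma_2,\Gamma_3)$ and let $\Theta=\langle\Phi_1,\Phi_2\rangle$ be the projectivity group of Proposition \ref{trihomhom+}. Because $\Theta$ is generated by products of elements of $\cU$, it preserves the full net. Assume $(\Lambda_1,\Lambda_2,\Lambda_3)$ is not triangular; the key claim is that some $P\in\Lambda_i$ lies off every side of $\Delta$. Otherwise every component is contained in the union of the three sides, and a dual-net argument rules this out: if $P\in\Lambda_1$ lay on the side $\ell_2$ carrying $\Gamma_2$, then for any $Q\in\Gamma_2$ the line $PQ=\ell_2$ would meet $\Lambda_2$ in all of $\Gamma_2$, contradicting the fact that a line through two distinct components meets each in exactly one point (using $|\Gamma_2|=|M|\ge 2$); thus each $\Lambda_i$ would be confined to the single side carrying $\Gamma_i$, making the full net triangular, a contradiction. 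By Proposition \ref{trihomhom+} the stabilizer of $P$ in $\Theta$ is trivial, so $\Theta\cdot P\subset\Lambda_i$ has size $|\Theta|$ and is disjoint from $\Gamma_i$. Therefore $|G|=|\Lambda_i|\ge |\Theta|+|\Gamma_i|=|\Theta|+|M|$. Substituting $|\Theta|=|M|^2$ when $\gcd(3,|G|)=1$, or $|\Theta|\ge |M|^2/3$ when $\gcd(3,|G|)=3$, and using $[G:M]=|G|/|M|$, yields the stated strict inequalities after elementary rearrangement.

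The principal obstacle is the step isolating a point $P$ strictly off $\Delta$ from the non-triangularity hypothesis: it requires the dual-net axiom together with $|\Gamma_j|\ge 2$, which is automatic as soon as $|\cU|\ge 6$ (the degenerate case $|\cU|=3$ has $|M|=1$ and makes the conclusion of (iii) vacuous). The remaining verifications — that $\Theta$ preserves the full net, that $\Theta$-orbits outside $\Delta$ are regular, and the small algebraic manipulations that turn $|G|\ge|M|^2+|M|$ and $|G|\ge|M|^2/3+|M|$ into the two stated bounds — are routine given Propositions \ref{charinvbis} and \ref{trihomhom+}.
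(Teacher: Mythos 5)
Your argument for (i)--(iii) is essentially the paper's own: the conjugation trick $\sigma\varphi\sigma$ to transport centers between components and to show the centers form a dual $3$-subnet, then Proposition \ref{charinvbis} for triangularity, then the regular $\Theta$-orbit of a point off the associated triangle to get $|G|>|\Theta|$ and hence the bound via Proposition \ref{trihomhom+}. Your extra justifications (why a point off the triangle exists when the net is not triangular, why $\Theta$ preserves each component, and the slightly stronger count $|G|\ge|\Theta|+|M|$) are correct and are left implicit in the paper.

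There is one gap: you invoke Proposition \ref{charinvbis} to conclude that $(\Gamma_1,\Gamma_2,\Gamma_3)$ is triangular, but that proposition requires order $n\ge 4$, while the subnet of centers has order $m=|\cU|/3$, which can be $2$ or $3$ (your own degenerate-case remark only disposes of $m=1$). The paper treats these separately: for $m=2$ each component is trivially collinear and Lemma \ref{lem250220111} rules out the pencil configuration, while for $m=3$ one needs the additional observation that the nine centers form a Hesse configuration, whose components lie on three lines of a syzygetic triangle --- an argument not covered by anything in your write-up. Since the $m=3$ case genuinely requires this extra idea (nine points of a dual $3$-net of order $3$ need not a priori be collinear by components), you should add it; the rest of the proof stands as written.
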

\begin{proof} Let $\cG$ be the projectivity group preserving $(\Lambda_1,\Lambda_2,\Lambda_3)$. Let $(ijk)$ denote any permutation of $(123)$. As we have already observed in the proof of Proposition \ref{charinv}, if $\varphi\in\cG$ is an involutory homology with center $P\in \Lambda_i$, then $\varphi$ preserves $\Lambda_i$ and interchanges $\Lambda_j$ with $\Lambda_k$. If $\sigma\in\cG$ is another involutory homology with center $R\in\Lambda_j$ then $\sigma\varphi\sigma$ is also an involutory homology whose center $S$ is the common point of $\Lambda_k$ with the line $\ell$ through $P$ and $R$. In terms of dual $3$-subnets, this yields (i) and (ii). Let $m$ be the order of $(\Gamma_1,\Gamma_2,\Gamma_3)$. For $m=2$, $(\Gamma_1,\Gamma_2,\Gamma_3)$ is triangular. For $m=3$, $\Gamma_1\cup\Gamma_2\cup\Gamma_3$  is the Hesse configuration, and hence $(\Gamma_1,\Gamma_2,\Gamma_3)$ is triangular. This holds true for $m\ge 4$ by Proposition \ref{charinvbis} applied to $(\Gamma_1,\Gamma_2,\Gamma_3)$.

To prove (iii), assume that $(\Lambda_1,\Lambda_2,\Lambda_3)$ is not triangular and take a point $P$ from some component, say $\Lambda_3$, that does not lie on the sides of the triangle associated to
$(\Gamma_1,\Gamma_2,\Gamma_3)$. Since $(\Gamma_1,\Gamma_2,\Gamma_3)$ is triangular, it can play the role of $(\Lambda_1,\Lambda_2,\Lambda_3)$ in Section \ref{triangular3net}, and we use the notation introduced there. From the second assertion of Proposition \ref{trihomhom+}, the point has as many as $|\Theta|$ distinct images, all lying in  $\Lambda_3$. Therefore, $|G|=|\Lambda_3|>|\Theta|$. Using Proposition \ref{trihomhom+}, $|\Theta|$ can be written in function of $|M|$ giving the assertion
% and $(\Lambda_1,\Lambda_2,\Lambda_3)$ %whenever they are replaced by $(\Lambda_1,\Lambda_2,\Lambda_3)$ and $(\Sigma_1,\Sigma_2,\Sigma_3)$ %respectively. Therefore (iii) follows from Proposition \ref{10giugno}.
\end{proof}
Let $\cU_2$ be the set of all involutory homologies with center in $\Lambda_2$  which interchanges $\Lambda_1$ and $\Lambda_3$. There is a natural injective map $\Psi$ from $\cU_2$ to $G$ where $\Psi(\psi)=g$ holds if and only if the point $g_2\in \Lambda_2$ is the center of $\psi$.
\begin{proposition} \label{15aprile}
Let $(\Lambda_1,\Lambda_2,\Lambda_3)$ be a dual $3$-net of order $n\geq 4$ realizing a group $G$. If $|\cU_2|\geq 2$ then the following hold.
\begin{itemize}
\item[\rm(i)] $\cU_2$ is closed by conjugation, that is, $\psi\omega\psi\in \cU_2$ whenever $\psi,\omega\in \cU_2$.
\item[\rm(ii)] If $g,h\in \Psi(\cU_2)$ then $gh^{-1}g\in\Psi(\cU_2)$.
\item[\rm(iii)] If $G$ has a cyclic subgroup $H$ of order $6$ with $|H\cap \Psi(\cU_2)|\geq 3$ and $1\in H\cap \Psi(\cU_2)$, then either $\Psi(\cU_2)=H$, or $\Psi(\cU_2)$ is the subgroup of $H$ of order $3$.
\end{itemize}
\end{proposition}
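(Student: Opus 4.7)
The plan is to obtain part~(i) from the fact that conjugating an involutory homology by a projectivity again yields an involutory homology, then to upgrade this to part~(ii) by computing the action of an element of $\cU_2$ on $\Lambda_2$ in terms of the natural labeling by $G$, and finally to derive part~(iii) by iterating~(ii) inside the cyclic subgroup $H$.

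For~(i), I would first observe that $\psi$ must preserve $\Lambda_2$, because its center $g_2$ lies there and the three components are pairwise disjoint; the definition of $\cU_2$ forces it to interchange $\Lambda_1$ with $\Lambda_3$. Conjugation of an involutory homology by any projectivity produces an involutory homology whose center is the image of the original center, so $\psi\omega\psi$ is an involutory homology with center $\psi(h_2)\in\Lambda_2$ (where $h=\Psi(\omega)$). Since each of $\psi,\omega,\psi$ swaps $\Lambda_1$ with $\Lambda_3$, so does their composition, placing $\psi\omega\psi$ in $\cU_2$.

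For~(ii), I would derive an explicit formula for the action of $\psi$ on $\Lambda_2$. Every line through $g_2$ is preserved by $\psi$; the line $\overline{a_1 g_2}$ meets $\Lambda_3$ in the unique point $(ag)_3$, and since $\psi$ swaps $\Lambda_1$ with $\Lambda_3$ while fixing $g_2$, one gets $\psi(a_1)=(ag)_3$ and similarly $\psi((ab)_3)=(abg^{-1})_1$. Applying $\psi$ to the collinear triple $(a_1,b_2,(ab)_3)$ and imposing collinearity of its image yields, after a one-line group computation in which $a$ drops out, $\psi(b_2)=(gb^{-1}g)_2$. Consequently the center of $\psi\omega\psi$ is $(gh^{-1}g)_2$, so $\Psi(\psi\omega\psi)=gh^{-1}g$, which by~(i) lies in $\Psi(\cU_2)$.

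For~(iii), write $H=\langle t\rangle$ with $t^6=1$ and set $S=H\cap\Psi(\cU_2)$. Instantiating~(ii) at $g=1\in S$ shows $S$ is closed under inversion, and at $h=1$ shows $S$ is closed under squaring. A short case split in the six-element set $H$ then finishes. If $t\in S$ (equivalently $t^{-1}=t^5\in S$), then $t^2,t^4$ and $t^3=t^2\cdot t^{-1}\cdot t^2$ all lie in $S$, giving $S=H$. Otherwise $t,t^5\notin S$, so $S\subseteq\{1,t^2,t^3,t^4\}$; the hypothesis $|S|\geq 3$ together with closure under inversion forces $t^2,t^4\in S$, while assuming also $t^3\in S$ would yield $t^2\cdot(t^3)^{-1}\cdot t^2=t\in S$, a contradiction. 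Hence $S=\{1,t^2,t^4\}$, the unique subgroup of $H$ of order $3$.

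The main technical step, and the only place where projective geometry really enters, is identifying the action $b\mapsto gb^{-1}g$ of $\psi$ on $\Lambda_2$ via the collinearity calculation in~(ii); once this formula is available, all three parts reduce to short combinatorial checks. One interpretive remark: the conclusion of~(iii) is naturally read as describing the intersection $H\cap\Psi(\cU_2)$, since the operation $(g,h)\mapsto gh^{-1}g$ alone does not force $\Psi(\cU_2)\subseteq H$.
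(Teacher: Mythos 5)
Your proposal is correct and follows essentially the same route as the paper: both establish the key identity $\Psi(\psi\omega\psi)=gh^{-1}g$ by tracking the action of the homologies on the naturally labeled components (the paper computes $\tau(x_1)=(xgh^{-1}g)_3$ directly, you compute $\psi(b_2)=(gb^{-1}g)_2$ — equivalent calculations), and then deduce (iii) by closure under $(g,h)\mapsto gh^{-1}g$ inside the order-$6$ cyclic group. Your explicit case split for (iii) and your closing remark that the conclusion really concerns $H\cap\Psi(\cU_2)$ are both more careful than the paper's terse ``From this assertion (iii) follows,'' and they accurately reflect what the argument actually delivers and how (iii) is later used.
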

\begin{proof} For $\psi,\omega \in \cU_2$, the conjugate $\tau=\psi\omega\psi$ of $\omega$ by $\psi$ is also an involutory homology. Let $g=\Psi(\psi)$ and $h=\Psi(\omega)$. Then  the center of $\tau$ is $\psi(h_2)$. For $x\in G$, the image of $x_1$ under $\tau$ is $y_3\in \Lambda_3$ with $y=xgh^{-1}g$. This shows that the center of $\tau$ is also in $\Lambda_2$; more precisely
\begin{equation}
\label{eq15aprile} \Psi(\psi\omega\psi)=\Psi(\psi)(\Psi(\omega))^{-1}\Psi(\psi).
\end{equation}
In the case where $G$ has a cyclic subgroup $H$ of order $6$, assume the existence of three distinct elements $\psi,\omega\,\rho \in \cU_2$ such that  $g=\Psi(\psi),\,h=\Psi(\omega)$, and $r=\Psi(\rho)$ with $g,h,r\in H$. Then $H$ contains $gh^{-1}g,\,hg^{-1}h,\,g^2$ and $h^2$. From this assertion (iii) follows.
\end{proof}
%There are five pairwise non-isomorphic groups of order $8$, each but the elementary abelian group can be realized by a dual $3$-net. More precisely, %if $G$ is cyclic or $G\cong C_2\times C_4$ then the dual $3$-net realizing $G$ is algebraic, by Proposition \ref{clac2c4}.

\section{Dual $3$-nets containing algebraic $3$-subnets of order $n$ with $n\geq 5$.}
A key result is the following proposition.
\begin{proposition}
\label{superlemmaszeged} Let $G$ be a group containing a proper abelian  subgroup $H$ of order $n\geq 5$. Assume that a dual $3$-net $(\Lambda_1,\Lambda_2,\Lambda_3)$ realizes $G$ such that all its dual $3$-subnets $(\Gamma_1^j,\Gamma_2,\Gamma_3^j)$ realizing $H$ as a subgroup of $G$ are algebraic. Let $\cF_j$ be the cubic  through the points of $(\Gamma_1^j,\Gamma_2,\Gamma_3^j)$. If $(\Lambda_1,\Lambda_2,\Lambda_3)$ is not algebraic then $\Gamma_2$ contains three collinear points and one of the following holds:
\begin{itemize}
\item[\rm(i)] $\Gamma_2$ is contained in a line.
\item[\rm(ii)] $n=5$ and there is an involutory homology with center in $\Gamma_2$ which preserves every $\cF_j$ and interchanges $\Lambda_1$ and $\Lambda_3$.
\item[\rm(iii)] $n=6$ and there are three involutory homologies with center in $\Gamma_2$ which preserves every $\cF_j$ and interchanges $\Lambda_1$ and $\Lambda_3$.
\item[\rm(iv)] $n=9$ and $\Gamma_2$ consists of the nine common inflection points of  $\cF_j$.
\end{itemize}
\end{proposition}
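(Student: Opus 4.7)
The cubics $\{\cF_j\}$ all pass through the fixed $n$-point set $\Gamma_2$, and I plan to exploit how two distinct members $\cF_j \ne \cF_{j'}$ of this family can intersect. First, I would handle the sub-case where all $\cF_j$ coincide in a single cubic $\cF$: then $\Lambda_1 \cup \Gamma_2 \cup \Lambda_3 \subset \cF$, and a repeated application of Lame's theorem (in the spirit of the proof of Proposition \ref{th5.3}) along collinear triples $(A_1, A_2', C_3)$ with $A_2' \in \Lambda_2$ either forces every such $A_2'$ onto $\cF$ making the whole net algebraic — contradicting the hypothesis — or exhibits collinearities within $\Gamma_2$. Hence I may assume $\cF_j \ne \cF_{j'}$ for some $j \ne j'$.

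The main step is a dichotomy on whether $\cF_j$ and $\cF_{j'}$ share an irreducible component. If they share no component, Bezout gives $|\cF_j \cap \cF_{j'}| \leq 9$, whence $n \leq 9$. If they share a line $\ell$, I would write $\cF_j = \ell \cup \cQ_j$, $\cF_{j'} = \ell \cup \cQ_{j'}$ with distinct conics $\cQ_j, \cQ_{j'}$; the points of $\Gamma_2$ off $\ell$ lie in $\cQ_j \cap \cQ_{j'}$, a set of at most $4$ points, so at least $n-4$ points of $\Gamma_2$ lie on $\ell$. This forces three collinear points in $\Gamma_2$ whenever $n \ge 7$, and gives case (i) whenever all of $\Gamma_2$ lies on $\ell$. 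If instead $\cF_j$ and $\cF_{j'}$ share an irreducible conic $\cC$, then $\cF_j = \cC \cup \ell_j$, each subnet $(\Gamma_1^j, \Gamma_2, \Gamma_3^j)$ is of conic-line type by Lemma \ref{20july}, Proposition \ref{cltype2} forces $H$ cyclic, and Proposition \ref{cltype1} applied to the two conic-line subnets sharing $\Gamma_2$ yields $\ell_j = \ell_{j'}$ (using $n \ge 5$), contradicting $\cF_j \ne \cF_{j'}$.

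Having reduced to the case of no shared component with $n \le 9$, I would treat the remaining possibilities for $n$. For $n=9$, Bezout achieves equality, so $\Gamma_2 = \cF_j \cap \cF_{j'}$ is the complete base locus of the pencil $\langle \cF_j, \cF_{j'} \rangle$; combining Proposition \ref{algebraic1} with the fact that the 3-torsion of a non-singular plane cubic is $C_3 \times C_3$ would force $H \cong C_3 \times C_3$ and $\Gamma_2$ to consist of the 9 common inflections — this is the Hesse configuration of case (iv). For $n = 5, 6$, the involutory homologies of cases (ii), (iii) would arise as follows: the inversion $P \mapsto -P$ on each irreducible $\cF_j$ is a projective involutory homology centred at an inflection point of $\cF_j$, and if this inflection point lies in $\Gamma_2$ then the homology swaps the cosets $\Gamma_1^j$ and $\Gamma_3^j$ (since $g_1 + g_3 = -g_2$ in the group on $\cF_j$) and hence interchanges $\Lambda_1$ with $\Lambda_3$. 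A count of how many common inflections of the family $\{\cF_j\}$ lie in $\Gamma_2$ would produce exactly one such centre for $n=5$ and three for $n=6$. The intermediate values $n=7,8$ should be excluded by the incompatibility of abelian $H$ of those orders with the Bezout-constrained configuration on $\Gamma_2$.

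\emph{Main obstacle.} The most delicate step will be justifying, for $n \in \{5, 6\}$, that the inversion on each $\cF_j$ is induced by a single projective involutory homology with centre in $\Gamma_2$ and independent of $j$; this requires synchronising the group laws on the various $\cF_j$ along their common base set $\Gamma_2$, which I expect to do by tracking how projectivities preserving the pencil $\langle \cF_j, \cF_{j'}\rangle$ permute the $\cF_j$ and preserve $\Gamma_2$. A secondary technical point is ruling out the conic-line subcase via Proposition \ref{cltype1}, which is precisely where the hypothesis $n \ge 5$ enters.
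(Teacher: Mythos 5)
Your plan diverges from the paper's argument in a way that leaves the central quantitative step unsupported. The paper's engine is Lemma \ref{prop:abel} (packaged as Lemma \ref{abeliansub}): the common labeling of $\Gamma_2$ by $H$ is of the form $\beta(y)=\varphi_j(y)*b_j$ for a group homomorphism $\varphi_j:H\to(\cF_j,*)$, and therefore the tangential point of each $P\in\Gamma_2$ on $\cF_j$ is computed inside $H$ (as $y\mapsto \ominus 2y\ominus t$ with $\varphi_j(t)=3b_j$), hence is the \emph{same point of $\Gamma_2$ for every $j$}. Consequently two distinct cubics $\cF_j\neq\cF_{j'}$ share not only the $n$ points of $\Gamma_2$ but also the tangent lines there, so each non-inflection point of $\Gamma_2$ contributes intersection multiplicity at least $2$ and B\'ezout gives $2n-m\le 9$, where $m\in\{1,3,9\}$ is the number of common inflections in $\Gamma_2$. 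This is what forces $n\le 6$ or $n=9$ and simultaneously pins down the intersection divisor ($P_0+2P_1+2P_2+2P_3+2P_4$ for $n=5$, $P_0+P_2+P_4+2P_1+2P_3+2P_5$ for $n=6$). Your plain B\'ezout bound $n\le 9$ cannot exclude $n=7,8$: two irreducible cubics can meet in $7$ or $8$ distinct points, and $C_7$, $C_8$ are legitimate abelian groups, so the appeal to ``incompatibility of abelian $H$ of those orders'' has no visible mechanism behind it. The same homomorphism device is also what resolves the obstacle you correctly flag for $n=5,6$: once the intersection divisor is known, every $\cF_j$ lies in an explicit pencil, and the required involutory homologies are written down as projectivities preserving \emph{every} cubic of that pencil, so no separate synchronisation of the group laws on the various $\cF_j$ is needed.

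Two further points. First, the paper's dichotomy is not ``do the $\cF_j$ coincide,'' but ``does $\Gamma_2$ contain three collinear points,'' characterised algebraically by $3b\in\varphi(H)$; in the negative case all cubics are shown to coincide by observing that for $z=x_1\oplus y_1=x_2\oplus y_2$ the second intersection $S$ of the chords $\beta(x_1)\beta(y_1)$ and $\beta(x_2)\beta(y_2)$ is group-theoretically determined and hence common to all $\cF_j$, yielding $|\cF\cap\cF_j|\ge 2n\ge 10$. Your proposed Lam\'e-type treatment of the coincident case is not worked out and is not obviously sufficient, since the subnets through $\Gamma_2$ cover only $\Lambda_1\cup\Gamma_2\cup\Lambda_3$. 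Second, in your shared-line subcase you only get $n-4$ points of $\Gamma_2$ on $\ell$, which for $n=5,6$ neither produces three collinear points nor lands in case (i); the paper instead derives case (i) directly from Lemma \ref{20july} (a reducible $\cF_j$ makes the subnet triangular or conic-line, and a $\Gamma_2$ with three collinear points cannot sit on an irreducible conic).
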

%For the proof, fix one of the dual $3$-subnets realizing $H$ as a subgroup of $G$, and let $(\cF,+)$ the additive group of the plane cubic $\cF$ %through the points of $(\Gamma_1,\Gamma_2,\Gamma_3)$.
We need the following technical lemma.
\begin{lemma} \label{prop:abel}
Let $A=(A,\oplus)$, $B=(B,+)$ be abelian groups and consider the injective maps
$\alpha,\beta,\gamma:A\to B$ such that $\alpha(x)+\beta(y)+\gamma(z)=0$ if and only if
$z=x\oplus y$. Then, $\alpha(x)=\varphi(x)+a$, $\beta(x)=\varphi(x)+b$,
$\gamma(x)=-\varphi(x)-a-b$ for some injective homomorphism $\varphi:A\to B$ and elements
$a,b\in B$.
\end{lemma}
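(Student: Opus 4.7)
The plan is to normalize the three maps so that they all share the same ``linear part,'' and then read off directly from the hypothesis that this common part is a group homomorphism.

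First I would set
\[
a:=\alpha(0),\qquad b:=\beta(0),\qquad \varphi(x):=\alpha(x)-a,
\]
so that $\varphi:A\to B$ is an injective map with $\varphi(0)=0$. The claim is that $\varphi$ is the homomorphism we are after, with $\beta(x)=\varphi(x)+b$ and $\gamma(x)=-\varphi(x)-a-b$. Two applications of the hypothesis dispose of $\gamma$ and $\beta$: taking $y=0$ forces $z=x\oplus 0=x$, so
\[
\alpha(x)+\beta(0)+\gamma(x)=0\quad\Longrightarrow\quad \gamma(x)=-\alpha(x)-b=-\varphi(x)-a-b;
\]
taking $x=0$ forces $z=0\oplus y=y$, so
\[
\alpha(0)+\beta(y)+\gamma(y)=0\quad\Longrightarrow\quad \beta(y)=-a-\gamma(y)=\varphi(y)+b.
\]

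With $\beta$ and $\gamma$ now expressed in terms of $\varphi$, I would substitute back into the hypothesis for arbitrary $x,y\in A$. Since $z=x\oplus y$ satisfies $\alpha(x)+\beta(y)+\gamma(x\oplus y)=0$, I get
\[
(\varphi(x)+a)+(\varphi(y)+b)+\bigl(-\varphi(x\oplus y)-a-b\bigr)=0,
\]
that is, $\varphi(x\oplus y)=\varphi(x)+\varphi(y)$. Thus $\varphi$ is a group homomorphism from $(A,\oplus)$ to $(B,+)$, and its injectivity is inherited from that of $\alpha$. This gives the required presentation
\[
\alpha(x)=\varphi(x)+a,\qquad \beta(x)=\varphi(x)+b,\qquad \gamma(x)=-\varphi(x)-a-b.
\]

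There is no genuine obstacle here: the result is essentially the observation that a ``3-net relation'' on abelian groups is rigid under normalization of three basepoints. The only place where care is needed is to use the hypothesis in its full ``if and only if'' form, both to derive $\gamma(x)$ from $z=x$ (which requires $x\oplus 0=x$) and to derive $\beta(y)$ from $z=y$ (which requires $0\oplus y=y$); commutativity of $\oplus$ plays no essential role, only the existence of an identity does, so commutativity of $(A,\oplus)$ is not strictly needed for the argument but is assumed in the statement.
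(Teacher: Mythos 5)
Your proof is correct and follows essentially the same route as the paper's: normalize at $0$ by setting $a=\alpha(0)$, $b=\beta(0)$, use the specializations $y=0$ and $x=0$ to express all three maps through a single function $\varphi$, and then read off additivity of $\varphi$ from the general relation $\alpha(x)+\beta(y)+\gamma(x\oplus y)=0$. The only cosmetic difference is that you define $\varphi$ via $\alpha$ while the paper defines it via $\gamma$; the resulting function and argument are identical.
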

\begin{proof} Define $a=\alpha(0),\,b=\beta(0)$ and $\varphi(x)=-\gamma(x)-a-b$. For $x=0,\,z=y$, we obtain that
$\alpha(0)+\beta(y)+\gamma(y)=0$ whence $\beta(y)=-\gamma(y)-a=\varphi(y)+b$. Similarly, for $y=0,\,z=x$, we obtain that $\alpha(x)+\beta(0)+\gamma(x)=0$ whence $\alpha(x)=-\gamma(x)-b=\varphi(x)+a$. Finally, for any $x,y\in G$
%and $z=x\oplus y$,
%$$0=\alpha(x)+\beta(y)+\gamma(x\oplus y)=\varphi(x)+a+\varphi(y)+b-\varphi(x\oplus y)-a-b=0.$$
$$
\begin{array}{lll}
\varphi(x)+\varphi(y)-\varphi(x+y)&=&\varphi(x)+a+\varphi(y)+b-(\varphi(x+y)+a+b)\\
&=& \alpha(x)+\beta(y)+\gamma(x+y)=0.
\end{array}
$$
Therefore, $\varphi:\,A\to B$ is a group homomorphism.
\end{proof}
Let $A=(A,\oplus)$  be an abelian group and $\alpha,\beta,\gamma$ injective maps from $A$ to $PG(2,\mathbb{K})$. The triple $(\alpha,\beta,\gamma)$ is a \emph{realization} of $A$ if the points $\alpha(x),\beta(y),\gamma(z)$ are collinear if and only if $z=x\oplus y$.
%For the proof of Proposition \ref{superlemmaszeged}, fix one of the dual $3$-subnets realizing $H$ as a subgroup of $G$, and let $(\cF,+)$ the %additive group of the plane cubic $\cF$ through the points of $(\Gamma_1,\Gamma_2,\Gamma_3)$.
%Now, take $H$ as $A$, and $(\cF,+)$ as $B$.
Since $(\Lambda_1,\Lambda_2,\Lambda_3)$ realizes $G$, the natural labeling gives rise to a realization $(\alpha,\beta,\gamma)$ such that $\alpha(G)=\Lambda_1,$ $\beta(G)=\Lambda_2,\gamma(G)=\Lambda_3$. Let $u\in G$. Since $H$ is a subgroup of $G$, the triple $$(\alpha_{u}(x)=\alpha(ux),\beta(y)=\beta(y),\gamma_{u}(z)=\alpha(uz))$$
provides a realization of $H$ such that
$$\alpha_{u}(H)=\Gamma_1^u,\,\beta(H)=\Gamma_2,\,\gamma_{u}(H)=\Gamma_3^{u}.$$
Therefore, Lemma \ref{prop:abel} has the following corollary where
$(\cF_j,*)$ denotes the additive groups of the plane cubic $\cF_j$ through the points of $(\Gamma_1^j,\Gamma_2,\Gamma_3^j)$ where,  for $u=1$, we write $(\cF,+),\alpha,\beta,\gamma,\Gamma_1,\Gamma_2,\Gamma_3$.
\begin{lemma}
\label{abeliansub} There exist two realizations from $H$ into $PG(2,\mathbb{K})$, say
$(\alpha,\beta,\gamma)$ and $(\alpha_j,\beta_j,\gamma_j)$ with $$\alpha(H)=\Gamma_1,\,\beta(H)=\Gamma_2,\,
\gamma(H)=\Gamma_3,\,\alpha_j(H)=\Gamma_1^j,\,\beta_j(H)=\Gamma_2,\,
\gamma_j(H)=\Gamma_3^j$$ such that
\begin{eqnarray*}
\alpha(x)=\varphi(x)+a,\beta(y)=\varphi(y)+b,\gamma(z)=\varphi(z)+c,\\
\alpha_j(x)=\varphi_j(x)*a_j,\beta_j(y)=\varphi_j(y)*b_j,\gamma_j(z)=\varphi_j(z)*c_j
\end{eqnarray*}
for every $x,y,z\in H$ where both $\varphi:\,H\to (\cF,+)$ and $\varphi_j:\,H\to (\cF_j,*)$ are injective homomorphisms, and $\varphi(y)+b=\varphi_j(y)*b_j$ for every $y\in H$.
\end{lemma}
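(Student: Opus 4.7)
My plan is to apply Lemma \ref{prop:abel} twice -- once to the subnet $(\Gamma_1,\Gamma_2,\Gamma_3)$ and once to the subnet $(\Gamma_1^j,\Gamma_2,\Gamma_3^j)$ -- and then to exploit the fact that both realizations use the same parametrization of the shared component $\Gamma_2$. Since $(\Lambda_1,\Lambda_2,\Lambda_3)$ realizes $G$, the natural labelings give bijections $\alpha_L,\beta_L,\gamma_L\colon G \to \Lambda_i$ with the property that $\alpha_L(g_1),\beta_L(g_2),\gamma_L(g_3)$ are collinear iff $g_1g_2=g_3$ in $G$.

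First I would set $\alpha=\alpha_L|_H$, $\beta=\beta_L|_H$, $\gamma=\gamma_L|_H$; these are bijections from $H$ onto $\Gamma_1,\Gamma_2,\Gamma_3$ respectively and form a realization of $H$ in the sense defined before the lemma. For the generic index $j$, let $u_j\in G$ be the coset representative with $\Gamma_1^j=u_jH$ and $\Gamma_3^j=u_jH$ as in Lemma \ref{befana}, and define
\[
\alpha_j(x)=\alpha_L(u_j x),\quad \beta_j(y)=\beta_L(y),\quad \gamma_j(z)=\gamma_L(u_j z)
\]
for $x,y,z\in H$. Left cancellation in $G$ immediately gives that $\alpha_j(x),\beta_j(y),\gamma_j(z)$ are collinear iff $(u_j x)y=u_j z$, i.e.\ iff $xy=z$ in $H$, so $(\alpha_j,\beta_j,\gamma_j)$ is a realization of $H$ on $(\Gamma_1^j,\Gamma_2,\Gamma_3^j)$. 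The crucial observation is that the $\beta$-component is unchanged when $j$ varies, since its definition does not involve $u_j$.

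Next I would invoke the hypothesis that these subnets are algebraic, lying on cubics $\cF$ and $\cF_j$. On a plane cubic equipped with its additive group (Propositions \ref{groupcubic}, \ref{groupcubic1}, and Lemma \ref{20july} in the reducible case), three points are collinear iff they sum to the identity. Hence $(\alpha,\beta,\gamma)$ satisfies the hypothesis of Lemma \ref{prop:abel} with $A=H$ and $B=(\cF,+)$, which produces an injective homomorphism $\varphi\colon H\to(\cF,+)$ and elements $a,b\in\cF$ such that $\alpha(x)=\varphi(x)+a$ and $\beta(y)=\varphi(y)+b$, with $\gamma$ determined by $\varphi$ and $a,b$ (setting $c$ so as to absorb the sign convention of Lemma \ref{prop:abel}). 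An identical application to $(\alpha_j,\beta_j,\gamma_j)$ with $B=(\cF_j,*)$ yields the injective homomorphism $\varphi_j\colon H\to(\cF_j,*)$ and elements $a_j,b_j,c_j\in\cF_j$ of the claimed form.

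Finally, since $\beta_j(y)=\beta_L(y)=\beta(y)$ for every $y\in H$, equating the two expressions yields the compatibility identity $\varphi(y)+b=\varphi_j(y)*b_j$ as points of $PG(2,\mathbb K)$. The argument is really an unpacking of Lemma \ref{prop:abel}, so there is no serious obstacle; the only point requiring care is that the $\beta$-parametrization of the common component $\Gamma_2$ is shared among all subnets of the form $(\Gamma_1^j,\Gamma_2,\Gamma_3^j)$, which is precisely what carries the compatibility relation into the conclusion.
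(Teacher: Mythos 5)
Your proposal is correct and follows essentially the same route as the paper: restrict the natural labeling of $G$ to $H$, translate by the coset representative to get the realization on $(\Gamma_1^j,\Gamma_2,\Gamma_3^j)$ with the $\beta$-component unchanged, apply Lemma \ref{prop:abel} to each cubic's group law, and read off the compatibility identity from the shared $\beta$. The paper presents the lemma as an immediate corollary of exactly this setup, so there is nothing to add.
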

To prove Proposition \ref{superlemmaszeged} we point out that $3b\in \varphi(H)$ if and only if $\Gamma_2$ contains three collinear points.
Suppose that $\varphi(x_1)+b,\varphi(x_2)+b,\varphi(x_3)+b$ are three collinear points. Then $\varphi(x_1)+b+\varphi(x_2)+b+\varphi(x_3)+b=0$ whence $\varphi(x_1+x_2+x_3)+3b=0$. Therefore $3b\in \varphi(H)$. Conversely, if $\varphi(t)=3b$, take three pairwise distinct elements $x_1,x_2,x_3 \in H$ such that $x_1+x_2+x_3+t=0$. Then $\varphi(x_1)+b+\varphi(x_2)+b+\varphi(x_3)+b=0$. Therefore, the points $\varphi(x_1)+b,\varphi(x_2)+b$ and $\varphi(x_3)+b$ of $\Gamma_2$ are collinear. Notice that the element $t=-x_1-x_2-x_3\in H$ is the same even if we make the computation with $\varphi_j$ and $b_j$.

We separately deal with two cases.
\subsection{$\Gamma_2$ contains no three collinear points}
By the preceding observation, $3b\not\in \varphi(H)$.
 For any $z\in H$ take four different elements $x_1,y_1,x_2,y_2$ in $H$ such that
 \begin{equation}
 \label{6apr}
 z=x_1\oplus y_1=x_2\oplus y_2.
 \end{equation}
 Then $\varphi(x_1)+b+\varphi(y_1)+b=\varphi(z)+2b=\varphi(x_2)+b+\varphi(y_2)+b$. Let $P_i=\beta(x_i),\,Q_i=\beta(y_i)$ for $i=1,2$. Then $P_i\neq Q_i$ and the lines $P_1Q_1$ and $P_2Q_2$ meet in a point $S$ in $\cF$ outside $\Gamma_2$. The same holds for $\cF_j$. Therefore each point $S$ is a common point of $\cF$ and $\cF_i$ other than those in $\Gamma_2$. As $S$ only depends on $z$ which can be freely choosen if $|H|\geq 4$, there are at least $n$ such points $S$. Hence, $\cF\cap \cF_j$ contains at least $2n\geq 10$ points. By B\'ezout's theorem either $\cF=\cF_j$, or they are reducible. We may assume that the latter case occurs. By Lemma \ref{20july}, we may assume that  both $(\Gamma_1,\Gamma_2,\Gamma_3)$ and $(\Gamma_1^j,\Gamma_2,\Gamma_3^j)$ are  of conic-line type.  Here  $\Gamma_2$ is contained in an irreducible conic $\cC$ which is a common component of $\cF$ and $\cF_j$. By Proposition \ref{cltype1}, $\cF=\cF_j$.

\subsection{$\Gamma_2$ contains  three collinear points}
This time, $3b\in \varphi(H).$ Let $\varphi(t)=3b$ with $t\in H.$
If either $\cF$ or $\cF_j$ is reducible,
then $\Gamma_2$ is contained in a line. Therefore, both $\cF$ and $\cF_j$ are assumed to be irreducible.
%and $\ell$ is a common component of $\cF$ and $\cF_j$. If the other component of $\cF$ is an irreducible conic $\cC$ %then
%Observe first that $\cF_i$ is irreducible, as well. Assume that $\beta(x_1),\beta(x_2)$ and $\beta(x_3)$ are three %collinear points in $\Gamma_2$. Then
%$3b=\varphi(t)$ with $t=\ominus x_1 \ominus x_2 \ominus x_3$. Furthermore, the line through two distinct points in %$\Gamma_2$ is either a tangent to $\cF$ at one of these points, or it meets $\Gamma_2$ in a third point.

First, suppose in addition that $t\not\in 3H$.  For any $x\in H$,  let $y=2(\ominus x)\ominus t$. Observe that $y\neq x$. From
 \[2(\varphi(x)+b)+\varphi(y)+b = \varphi(t)+ \varphi(2x) +
\varphi(y) =0,\]
the point $Q=\beta(y)$ is the tangential point of $P=\beta(x)$ on $\cF$. Therefore,
 $\beta$ determines the tangents of $\cF$ at its points in $\Gamma_2$. This holds true for  $\cF_j$. From Lemma \ref{abeliansub}, $\cF$ and $\cF_j$ share the tangents at each of their common points in $\Gamma_2$. Therefore $|\cF\cap \cF_j|\geq 2n\geq 10$, and $\cF=\cF_j$ holds.

It remains to investigate the case where $3b=\varphi(3t_0)$ holds for some $t_0\in H$. Replacing $b$ by $b-\varphi(t_0)$ shows that $3b=0$ may be assumed. Therefore, the point $P=\varphi(y)+b$ with $y\in H$ is an inflection point of $\cF$ if and only if $3y=0$. Furthermore, if $3y\neq 0$ then $Q=\varphi(\ominus (2y))+b$ is the tangential point of $P$ on $\cF$. Therefore, $\beta$ determines the tangents of $\cF$ at its points in $\Gamma_2$. The same holds true for $\cF_j$. By Lemma \ref{abeliansub}, $P=\beta(y)$ is an inflection point of both $\cF$ and $\cF_j$ or none of them. In the latter case, $\cF$ and $\cF_j$ have the same tangent at $P$.

Let $m$ be the number of common inflection points of $\cF$ and $\cF_j$ lying in $\Gamma_2$. Obviously, $P=\varphi(0)+b$ is such a point, and hence $m\geq 1$. On the other hand, $m$ may assume only three values, namely $1$, $3$ and $9$. If $m=9$, then $\cF$ is non-singular and  $\Gamma_2$ consists of all the nine inflection points of $\cF$. The same holds for $\cF_j$. If $m=3$ then $\cF$ and $\cF_j$ share their tangents at $n-3$ common points. Therefore, $2n-3\leq 9$ whence $n\le 6$.

If $n=6$ there are three common inflection points of $\cF$ and $\cF_j$, and they are collinear.  Let $H$ be the additive group of integers modulo $6$. Then the inflection points of $\cF$ lying on $\Gamma_2$ are $P_i=\varphi(i)+b$ with $i=0,2,4$ while the tangential point of $P_i=\varphi(i)+b$ with $i=1,3,5$ is $P_{-2i}=\varphi(-2i)+b$. Now
fix a projective frame with homogeneous coordinates $(X,Y,Z)$ in such a way that $$
\begin{array}{lll}
P_0=(1,0,1),\,P_1=(0,0,1),\,P_2=(0,1,1),\\
P_3=(0,1,0),\,P_4=(-1,1,0),\,P_5=(1,0,0).
\end{array}
$$
 A straightforward computation shows that $\cF_j$  is in  the pencil $\cP$ comprising the cubics $\cG_\lambda$ of equation
$$(X-Z)(Y-Z)(X+Y)+\lambda XYZ=0,\quad \lambda\in {\mathbb K},$$
with the cubic $\cG_\infty$ of equation $XYZ=0$. The intersection divisor of the plane cubics in $\cP$ is $P_0+P_2+P_4+2P_1+2P_3+2P_5$.
Moreover, the points $P_0,P_2,P_4$ are inflection points of all irreducible cubics in $\cP$, and
$$
\begin{array}{lll}
\psi_0:\,(X,Y,Z)\to(Z,-Y,X),\\
\psi_2:\,(X,Y,Z)\to(-X,Z,Y),\\
\psi_4:\,(X,Y,Z)\to(Y,X,Z),
\end{array}
$$
are the involutory homologies preserving every cubic in $\cP$, the center of $\psi_i$ being $P_i$, for $i=0,2,4$.

If $n=5$, the zero of $H$ is the only element $y$ with $3y=0$. This shows that $\cF$ (and $\cF_j$) has only one inflection point $P_0$ in $\Gamma_2$ and $P_0$ is not the tangential point of another point in $\Gamma_2$. Each of the remaining four points is the the tangential point of exactly one point in $\Gamma_2$. These four points may be viewed as the vertices of a quadrangle $P_1P_2P_3P_4$ such that the side $P_iP_{i+1}$ is tangent to $\cF$ at $P_i$ for every $i$ with $P_5=P_1$. Therefore the intersection divisor of $\cF$ and $\cF_j$ is $P_0+2P_1+2P_2+2P_3+2P_4$, and  $\cF_j$ is contained in a pencil $\cP$.

Fix a projective frame with homogeneous coordinates $(X,Y,Z)$ in such a way that $$P_1=(0,0,1),\,P_2=(1,0,0),\,P_3=(1,1,1),\,P_4=(0,1,0).$$
Then $P_0=(1,1,0)$.
The pencil $\cP$ is generated by the cubics $\cG$ and $\cD$ with equations $Y(X-Z)Z=0$ and $X(Y-X)(Y-Z)=0$, respectively. Therefore it consists of cubics $G_{\lambda}$ with equation
$$Y^2X-X^2Y+(\lambda-1)XYZ+X^2Z-\lambda YZ^2=0,$$
together with $\cG=\cG_{\infty}$.
Since the line $Z=0$ contains three distinct base points of the pencil, $P_0$ is a non-singular point of $\cG_{\lambda}$ for every $\lambda\in {\mathbb K}$, the tangent $\ell_{\lambda}$ to $\cG_{\lambda}$ at $P_0$ has equation
$-X+Y+\lambda Z=0$. Assume that $Q_0$ is an inflection point of $\cG_{\lambda}$. Then  $\ell_{\lambda}$ contains no point $P=(X,Y,1)$ from $\cG_{\lambda}$, that is, the polynomials $Y^2X-X^2Y+(\lambda-1)XY+X^2-\lambda Y=0$ and $-X+Y+\lambda=0$ have no common solutions. On the other hand, eliminating $Y$ from these polynomials gives $\lambda^2$. This shows that $Q_0$ is an inflection point for every irreducible cubic in $\cP$. Hence $P_0=Q_0$.
%Furthermore, if $P_0$ is chosen to be the zero of $(\cF,+)$, then $P_1,P_2,P_3,P_4$ are $5$-torsion points.
Therefore the involutory homology  $$\varphi:\,(X,Y,Z)\mapsto (-Y+Z,-X+Z,Z)$$ with center $P_0$
preserves each cubic in $\cP$.

This completes the proof of Proposition \ref{superlemmaszeged}.

In the case where $H$ is an abelian normal subgroup of $G$, we have the following result.
\begin{proposition}
\label{superlemma} Let $G$ be a group containing a proper abelian normal subgroup $H$ of order $n\geq 5$. If a dual $3$-net $(\Lambda_1,\Lambda_2,\Lambda_3)$ realizes $G$ such that all its dual $3$-subnets realizing $H$ as a subgroup of $G$ are algebraic, then either {\rm{(I)}} or {\rm{(II)}} of Theorem \ref{mainteo}  holds.

\end{proposition}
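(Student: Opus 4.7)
The plan is to split the argument according to whether $(\Lambda_1,\Lambda_2,\Lambda_3)$ is algebraic.

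If $(\Lambda_1,\Lambda_2,\Lambda_3)$ is algebraic, then its associated plane cubic is either irreducible, the union of a conic and a line, or three lines. Invoking Proposition~\ref{algebraic1} together with the structural description of $(\cF,+)$ given by Propositions~\ref{groupcubic} and~\ref{groupcubic1} (the proper case), or Proposition~\ref{cltype2} (the conic-line case), or Proposition~\ref{triangular} (the triangular case), in every sub-case $G$ is forced to be cyclic or a direct product of two cyclic groups, yielding conclusion~(I).

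Now suppose $(\Lambda_1,\Lambda_2,\Lambda_3)$ is not algebraic. Because $H$ is \emph{normal}, Lemma~\ref{befanabis} produces, for every $H$-member $\Gamma$ in every component, a family of algebraic dual $3$-subnets realizing $H$ whose middle component is $\Gamma$. Proposition~\ref{superlemmaszeged} then applies to each such $\Gamma$ and forces it into one of the cases (i)--(iv). Since cases (ii), (iii), (iv) pin $n$ to $5$, $6$, $9$ respectively, whenever $n\notin\{5,6,9\}$ every $H$-member of every component must lie in a line. Then each algebraic $3$-subnet realizing $H$ has all three components on lines; as a conic-line $3$-net has two of its components on a conic, Proposition~\ref{bkmth5.1} rules out the conic-line type, leaving every such subnet triangular. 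Proposition~\ref{cortetra0} now concludes that $(\Lambda_1,\Lambda_2,\Lambda_3)$ itself is triangular or of tetrahedron type, and Propositions~\ref{triangular} and~\ref{tetra} identify $G$ as cyclic (conclusion~(I)) or dihedral (conclusion~(II)).

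It remains to dispose of the three exceptional values $n\in\{5,6,9\}$. For $n=9$ in case~(iv), $\Gamma_2$ is a Hesse configuration of nine inflection points and the cubics $\cF_j$ lie in the corresponding Hesse pencil; exploiting the fact that the pencil is preserved by a known group of projectivities and that, by normality of $H$, the same analysis applies after permuting the roles of the three components, one reduces either to a contradiction with non-algebraicity or to the triangular/tetrahedron alternative already handled. For $n=5$ (resp.\ $n=6$), case~(ii) (resp.\ (iii)) supplies involutory homologies centered in $\Gamma_2$ that preserve the full $3$-net; again by normality of $H$, after rotating the role of the middle component one collects involutory homologies centered in all three components, so Proposition~\ref{28febbrbis} extracts a triangular $3$-subnet with cyclic associated group $M$ satisfying $|G|<|G:M|^2$ or $|G|<3|G:M|^2$. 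Combined with $|H|=n\ge 5$ and the proper inclusion $H<G$, this forces $|G|$ into a range covered by the exhaustive classifications of Propositions~\ref{clac8szeged} and~\ref{clac9szeged}.

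The main obstacle is the exceptional range $n\in\{5,6,9\}$. The Hesse case is delicate because the Hesse pencil genuinely supports infinitely many distinct cubics through the nine common inflection points, so B\'ezout alone does not force the $\cF_j$'s to coincide; the extra input needed is exactly the normality of $H$, which propagates the nine-point base locus constraint across $H$-members in the other components. Cases (ii) and (iii) are delicate because a single application of Proposition~\ref{superlemmaszeged} only guarantees involutory homologies centered in one component, and the normality of $H$, via Lemma~\ref{befanabis}, is indispensable for pooling them across all three components to meet the hypotheses of Proposition~\ref{28febbrbis}.
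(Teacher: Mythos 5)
Your overall architecture matches the paper's: dispose of the algebraic case directly, use normality of $H$ so that every $H$-member can play the role of $\Gamma_2$ in Proposition~\ref{superlemmaszeged}, funnel the generic case ($n\notin\{5,6,9\}$, all members collinear) through Proposition~\ref{cortetra0}, and treat the exceptional values separately. The gaps are all in the endgame for $n\in\{5,6,9\}$. The most serious one is $n=5$: Proposition~\ref{28febbrbis}(iii) with $|G:M|=5$ only yields $|G|<25$, or $|G|<75$ when $3$ divides $|G|$, and Propositions~\ref{clac8szeged} and~\ref{clac9szeged} classify realizations of groups of order at most $9$ only --- they say nothing about groups of order between $10$ and $74$, so your claim that the bound lands "in a range covered by the exhaustive classifications" is false. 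Closing this case requires further group theory, as in the paper: if $G$ has an element of order $3$, that element centralizes $H\cong C_5$ (because $\mathrm{Aut}(C_5)$ has order $4$), giving a cyclic normal subgroup of order $15$ whose subnets are algebraic by Proposition~\ref{th5.3}, and one re-runs the argument with that larger subgroup; otherwise $|G|<25$ forces a normal subgroup of order $10$, cyclic or dihedral, and Propositions~\ref{cortetra} and~\ref{superlemmaszeged} finish.

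For $n=9$ your sketch gestures at the right mechanism (rotating the components by normality) but never produces the contradiction; the Hesse-pencil and projectivity-group language is a detour. The actual argument is short and should be made explicit: once some member falls into a sporadic case, every $\cF_j$ is irreducible, so no $H$-member with $n\ge 5$ points is collinear; now $\Gamma_2$ is precisely the set of the nine common inflection points, so an $H$-member $\Gamma_1$ of another component, being disjoint from $\Gamma_2$, cannot be that set, case (iv) fails for it, cases (ii) and (iii) fail because $n=9$, hence case (i) forces $\Gamma_1$ onto a line --- a contradiction. A smaller issue for $n=5,6$: Proposition~\ref{superlemmaszeged} only asserts that the homologies preserve the cubics $\cF_j$ and interchange $\Lambda_1$ with $\Lambda_3$; that they also preserve $\Lambda_2$, hence the whole $3$-net as required by Proposition~\ref{28febbrbis}, needs the short incidence argument (the lines $y_1(ya)_3$ are mapped to lines through $(a^{-1})_2$), not a bare assertion.
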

\begin{proof} The essential tool in the proof is Proposition \ref{superlemmaszeged}. Assume on the contrary that neither (I) nor (II) occurs.

If every $H$-member is contained in a line then every dual $3$-net realizing $H$ as a subgroup of $G$ is triangular. From Proposition \ref{cortetra0}, either (I) of (II) follows.

Take a $H$-member not contained in a line. Since $H$ is a normal subgroup, that $H$-member can play the role of $\Gamma_2$ in Proposition \ref{superlemmaszeged}. Therefore, one of the three sporadic cases in Proposition \ref{superlemmaszeged} holds. Furthermore, from the proof of that proposition, every $\cF_j$ is irreducible, and hence
neither $\Gamma_1^j$ nor $\Gamma_3^j$ is contained in a line. Therefore, no $H$-member is contained in a line. Since $H$ is a normal subgroup, every $3$-subnet $(\Gamma_1^i,\Gamma_2^j,\Gamma_3^s)$  realizing $H$ as a subgroup of $G$ lies in an irreducible plane cubic $\cF(i,j)$.

Therefore we can assume that all $H$-members have the exceptional configurations described in (ii), (iii) or (ivc) of Proposition \ref{superlemmaszeged}. We separately deal with the cases $n=5,6$ and $9$.
\subsection{$n=9$}
{}From (iv) of Proposition \ref{superlemmaszeged}, the cubics $\cF_j$ share their nine inflection points which form  $\Gamma_2$. So it is possible to avoid this case by replacing $\Gamma_2$ with $\Gamma_1$ so that $\Gamma_2$ will not have any inflection point of $\cF$.
\subsection{$n=6$}
Every $H$-member $\Gamma_2$ contains three collinear points, say $Q_1,Q_2,Q_3$, so that $Q_r$ is the center of an involutory homology $\psi_r$ interchanging $\Lambda_1$ and $\Lambda_3$. Relabeling the points of the dual $3$-net permits us to assume that $Q_1=1_2$. Then for all $x \in G$, $\psi_1$ interchanges the points $x_1$ and $x_3$. The point $a_2\in \Lambda_2$ is the intersection of the lines $y_1(ya)_3$, with $y\in G$. These lines are mapped to the lines $(ya)_1y_3$, which all contain the point $(a^{-1})_2$ of $\Lambda_2$. Therefore, the involutory homology $\psi_1$ leaves $\Lambda_2$ invariant. This holds true for all involutory homologies with center in $\Lambda_1 \cup \Lambda_2 \cup \Lambda_3$. Since the $H$-members partition each component of $(\Lambda_1,\Lambda_2,\Lambda_3)$ and every $H$-member comprises six points, it turns out that half of the points of $(\Lambda_1,\Lambda_2,\Lambda_3)$ are the centers of involutory homologies preserving $(\Lambda_1,\Lambda_2,\Lambda_3)$. Therefore, Proposition \ref{28febbrbis}(iii) applies. As in  Proposition \ref{28febbrbis}, let $M$ denote the subgroup of $G$ such that the dual $3$-subnet consisting of the centers of involutory homologies realizes $M$. As $|G:M|=2$, \ref{28febbrbis}(iii) implies $|G|<6$, a contradiction.
\subsection{$n=5$}
The arguments in discussing case $n=6$ can adapted for case $n=5$. This time, Proposition \ref{superlemmaszeged} gives $|G:M|=5$. By Proposition \ref{28febbrbis}(iii), if $G$ contains an element of order $3$ then $|G|<75$, otherwise $|G|<25$. In the former case, the element of order $3$ of $G$ is in $C_G(H)$, hence $G$ contains a cyclic normal subgroup of order $15$. Then, $(\Lambda_1,\Lambda_2,\Lambda_3)$ is algebraic by Proposition \ref{superlemmaszeged}. If $G$ has no element of order three then $|G|<25$ and $G$ contains a normal subgroup of order $10$ which is either cyclic or dihedral. By Propositions \ref{cortetra} and \ref{superlemmaszeged} either {\rm{(I)}} or {\rm{(II)}} of Theorem \ref{mainteo}  holds.
\end{proof}
%%%%%%%%%%%%%%%%%%%%%%% 7 aprile 2011 Szeged
A corollary of Proposition \ref{superlemma} is  the following result.
\begin{theorem}
\label{teo1} Every dual $3$-net $(\Lambda_1,\Lambda_2,\Lambda_3)$ realizing an abelian group $G$ is algebraic.
%with only one possible exception where $G$ is an elementary abelian group of order $3^r$ with $r\geq 2$.
\end{theorem}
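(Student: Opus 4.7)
The plan is to prove Theorem~\ref{teo1} by a structural case split on the abelian group $G$, using Proposition~\ref{superlemma} as the core reduction. No induction is needed since each case is handled by a previously established result.

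First I would dispose of the easy cases: if $G$ is cyclic, Proposition~\ref{th5.3} applies; if $G$ has an element of order at least $10$, Proposition~\ref{th5.4} applies; and if $|G|\le 9$, Propositions~\ref{clac8szeged} and~\ref{clac9szeged} apply. In the remaining situation $G$ is noncyclic, has exponent at most $9$, and $|G|\ge 10$. The strategy here is to exhibit a proper abelian normal subgroup $H\le G$ of order $n\ge 5$ such that every dual $3$-net realizing $H$ is algebraic; then Proposition~\ref{superlemma} forces conclusion (I) or (II) of Theorem~\ref{mainteo}. Since a dihedral group of order $\ge 6$ is non-abelian and $|G|\ge 10$, conclusion (II) is ruled out, so (I) holds and $(\Lambda_1,\Lambda_2,\Lambda_3)$ is algebraic.

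The remaining task is producing $H$. I would split on the exponent $e$ of $G$. If $e\in\{5,6,7,8,9\}$, then I take $g\in G$ of order $e$ and set $H=\langle g\rangle$; this is a proper cyclic subgroup of order $\ge 5$ (proper because $G$ is not cyclic), and Proposition~\ref{th5.3} says every dual $3$-net realizing $H$ is algebraic. If $e=2$, then $G$ is elementary abelian of rank at least $4$, and Proposition~\ref{noelem} rules out any dual $3$-net realizing $G$, so the statement is vacuous. If $e=3$, then $G$ is elementary abelian of order at least $27$ and therefore contains a subgroup $H$ of order $9$, covered by Proposition~\ref{clac9szeged}. If $e=4$, then $G$ is an abelian $2$-group of exponent $4$ and order at least $16$; writing $G\cong C_4^a\times C_2^b$ with $a\ge 1$ and $2a+b\ge 4$, the group contains a subgroup $H$ of order $8$, covered by Proposition~\ref{clac8szeged}.

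The main obstacle, such as it is, lies in the bookkeeping that guarantees a suitable subgroup $H$ exists in every low-exponent sub-case; once Proposition~\ref{superlemma} is available, together with the low-order classifications of Propositions~\ref{clac8szeged} and~\ref{clac9szeged} and the exclusion result of Proposition~\ref{noelem}, no new geometric argument is needed.
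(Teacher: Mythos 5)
Your proof is correct, but it follows a genuinely different organization from the paper's. The paper argues by minimal counterexample (in effect, induction on $|G|$): for the smallest putative counterexample, every proper subgroup of order at least $5$ already satisfies the theorem, so Proposition \ref{superlemma} forces $|G|$ to be a power of $2$ or of $3$, and these two prime-power cases are then eliminated via subgroups of order $8$ or $9$ using Propositions \ref{clac8szeged} and \ref{clac9szeged}. You avoid the induction entirely by invoking Yuzvinsky's Proposition \ref{th5.4} to cap the exponent of $G$ at $9$, and then, case by case on the exponent, you exhibit an explicit proper subgroup $H$ (cyclic of order $5$--$9$, or of order $8$ or $9$) all of whose realizations are already known to be algebraic by Propositions \ref{th5.3}, \ref{clac8szeged}, \ref{clac9szeged}; both routes then funnel through Proposition \ref{superlemma}, with alternative (II) excluded because an abelian group of order $\ge 10$ is not dihedral. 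What your version buys is a non-inductive, fully explicit argument; what it costs is the extra dependence on Proposition \ref{th5.4}, which the paper's proof does not use and which you could in fact dispense with, since an element of order $\ge 10$ in a noncyclic $G$ already generates a proper cyclic subgroup of order $\ge 5$ to feed into Proposition \ref{superlemma} via Proposition \ref{th5.3}. One point worth a sentence in your exponent-$4$ case: the order-$8$ subgroup you produce might be elementary abelian, in which case Proposition \ref{noelem} (together with Lemma \ref{befana}) makes the whole situation vacuous; Proposition \ref{clac8szeged} still covers this formally, so the conclusion stands either way.
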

\begin{proof} By absurd, let $n$ be the smallest integer for which a counter-example $(\Lambda_1,\Lambda_2,\Lambda_3)$ to Theorem \ref{teo1} exists. Since any dual $3$-net of order $\leq 8$ is algebraic by Propositions \ref{clac8szeged} and \ref{th5.3}, we have that $n\geq 9$.  Furthermore, again by Proposition \ref{th5.3}, $G$ has composite order. Since $n$ is chosen to be as small as possible, from Proposition \ref{superlemma}, $|G|$ has only one prime divisor, namely either $2$ or $3$. Since $|G|\geq 9$, either $|G|=2^r$ with $r\geq 4$, or $|G|=3^r$ with $r\geq 2$. In the former case, $G$ has a subgroup $M$ of order $8$, and every dual $3$-subnet realizing $M$ is algebraic, by Proposition \ref{clac8szeged}. But, this together with Proposition \ref{superlemma} show that $(\Lambda_1,\Lambda_2,\Lambda_3)$ is not a counter-example. In the latter case $G$ contains no element of order $9$ and hence it is an elementary abelian group.
But then  $(\Lambda_1,\Lambda_2,\Lambda_3)$ is algebraic by Proposition \ref{clac9szeged}.
\end{proof}

\section{Dual $3$-nets realizing $2$-groups}
\begin{proposition}
\label{cl2group} Let $G$ be a group of order $n=2^h$ with $h\geq 2$. If  $G$ can be realized by a dual $3$-net $(\Lambda_1,\Lambda_2,\Lambda_3)$ then one of the following holds.
\begin{itemize}
\item[\rm{(i)}] $G$ is cyclic.
\item[\rm{(ii)}] $G\cong C_m\times C_k$ with $n=mk$.
\item[\rm{(iii)}] $G$ is a dihedral.
\item[\rm{(iv)}] $G$ is the quaternion group of order $8$.
\end{itemize}
\end{proposition}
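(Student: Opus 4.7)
The plan is to proceed by strong induction on $h$, with Proposition \ref{superlemma} as the principal reduction and Proposition \ref{cortetra} as a companion. The guiding idea is that as soon as $G$ contains a \emph{proper} abelian normal subgroup $H$ of order at least $5$, Theorem \ref{teo1} automatically makes every dual $3$-subnet realizing $H$ algebraic, and Proposition \ref{superlemma} then forces $G$ to be cyclic, a product of two cyclic groups, or dihedral---precisely options (i)--(iii) of the statement.

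For the base cases: when $h=2$ both groups of order $4$ lie in (i)--(ii); when $h=3$ one enumerates the five groups of order $8$, discards $C_2^3$ by Proposition \ref{noelem} (and also via Proposition \ref{clac8szeged}, which rules it out among abelian groups of order $\le 8$), and notes that the remaining four groups $C_8$, $C_4\times C_2$, $D_4$, $Q_8$ realize cases (i)--(iv) respectively.

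For the inductive step $h\ge 4$, I pick a subgroup $N<G$ of index $2$; this exists and is automatically normal since $G$ is a $2$-group. By the inductive hypothesis $N$ is one of the four types. If $N$ is abelian, then $|N|=2^{h-1}\ge 8\ge 5$ and a direct application of Proposition \ref{superlemma} (with $H=N$) puts $G$ into one of (i), (ii), (iii). If $N$ is dihedral, then $|G|\ge 16\ge 12$ and Proposition \ref{cortetra} forces $G$ itself to be dihedral. The only remaining possibility is $N\cong Q_8$, which forces $h=4$ and $|G|=16$.

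The step I expect to be the hardest, and where the proof does extra work, is precisely the case $|G|=16$ with $N\cong Q_8$. Here Proposition \ref{superlemma} cannot be applied directly to $N$, so the plan is to exhibit a \emph{different} abelian normal subgroup $A\trianglelefteq G$ of order $8$ and apply Proposition \ref{superlemma} to $A$. Such an $A$ is guaranteed by the classical fact that every $p$-group of order $p^4$ contains an abelian subgroup of order $p^3$, which is automatically normal since it has index $p$. Alternatively, one can bypass this appeal by studying the conjugation map $G\to \mathrm{Aut}(Q_8)\cong\mathrm{Sym}_4$: since $C_G(Q_8)\cap Q_8=Z(Q_8)$ has order $2$, one gets $|C_G(Q_8)|\in\{2,4\}$, and a short case split identifies $G$ with one of $Q_{16}$, $SD_{16}$, $Q_8\times C_2$, or the Pauli central product $Q_8\ast C_4$; each of these visibly carries an abelian normal subgroup of order $8$ (a cyclic $C_8$ in the first two, a $C_4\times C_2$ in the last two). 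Applying Proposition \ref{superlemma} to this $A$ places $G$ on the list and closes the induction.
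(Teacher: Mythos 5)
Your proof is correct and follows essentially the same route as the paper's: induction on $h$ via an index-$2$ (hence normal) subgroup, with Proposition \ref{superlemma} (whose hypothesis is supplied by Theorem \ref{teo1}) handling the abelian case and Proposition \ref{cortetra} the dihedral case. The only bottleneck, order $16$ with the chosen subgroup isomorphic to the quaternion group $Q_8$, is resolved in the paper by the observation that each of the fourteen groups of order $16$ has an abelian or dihedral subgroup of index $2$, which is precisely the fact you justify via the classical theorem on groups of order $p^4$.
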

\begin{proof} For $n=4,8$, the classification follows from Propositions \ref{clac8szeged}, \ref{tetradiedral} and \cite[Theorem 4.2]{ys2004}. Up to isomorphisms, there exist fourteen groups of order $16$; each has a subgroup $H$ of index $2$ that is either an abelian or a dihedral group. In the latter case, $G$ is itself dihedral, by Proposition \ref{cortetra}. So, Proposition \ref{superlemma} applies to $G$ and $H$ yielding that $G$ is abelian. This completes the proof for $n=16$. By induction on $h$ we assume that Proposition \ref{cl2group} holds for $n=2^h\geq 16$ and we are going to show that this remains true for $2^{h+1}$. Let $H$ be a subgroup of $G$ of index $2$. Then $|H|=2^h$ and one of the cases (i), (ii), and (iii) hold for $H$. Therefore, the assertion follows from Propositions \ref{superlemma} and \ref{cortetra}.
\end{proof}

\section{Dual $3$-nets containing algebraic $3$-subnets of order $n$ with $2\leq n \leq 4$.}
It is useful to investigate separately two cases according as $n=3,4$ or $n=2$. An essential tool in the investigation is $M=\cC_G(H),$ the centralizer of $H$ in $G.$
\begin{proposition}
\label{7aprileszeged4} Let $G$ be a finite group containing a normal subgroup $H$ of order $n$ with $n=3$ or $n=4$. Then every dual $3$-net $(\Lambda_1,\Lambda_2,\Lambda_3)$ realizing $G$ is either algebraic or of tetrahedron type, or, $G$ is isomorphic either to the quaternion group of order $8$, or to ${\rm{Alt}}_4$, or to ${\rm{Sym}}_4$.
% then $n=4$ and %either $G\cong \rm{Alt}_4$ or
%$G$ is the dicyclic group of order $12$.
\end{proposition}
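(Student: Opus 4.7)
\emph{Strategy.} The plan is to reduce to either Proposition~\ref{cortetra0} (when all the $H$-subnets are triangular) or Proposition~\ref{superlemma} (when $G$ has a normal abelian subgroup of order at least $5$ containing $H$), leaving only a short list of small groups to be identified as $Q_8$, $\mathrm{Alt}_4$ and $\mathrm{Sym}_4$. Since $|H|\in\{3,4\}\le 8$, Proposition~\ref{clac8szeged} guarantees that every dual $3$-subnet of $(\Lambda_1,\Lambda_2,\Lambda_3)$ realizing $H$ as a subgroup of $G$ (in the sense of Lemma~\ref{befanabis}) is algebraic; by Lemma~\ref{lem250220111} a totally reducible such cubic is necessarily a triangle. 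Hence every such $H$-subnet is either triangular, of conic-line type, or proper algebraic.

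\emph{Splitting into two main cases.} If all the $H$-subnets of $(\Lambda_1,\Lambda_2,\Lambda_3)$ are triangular, Proposition~\ref{cortetra0} forces $(\Lambda_1,\Lambda_2,\Lambda_3)$ to be triangular (hence algebraic) or of tetrahedron type, and we are done. Otherwise, I introduce $M=\cC_G(H)$: since $H$ is normal, so is $M$, and $G/M$ embeds in $\mathrm{Aut}(H)$, giving $[G:M]\le 2$ when $H$ is cyclic and $[G:M]\le 6$ when $H\cong C_2\times C_2$. The centre $Z(M)$ is characteristic in $M$, hence a normal abelian subgroup of $G$ containing $H$.

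\emph{Reduction via Proposition~\ref{superlemma}.} If $|Z(M)|\ge 5$, then $Z(M)$ is a normal abelian subgroup of $G$ of order at least $5$; by Theorem~\ref{teo1} every dual $3$-subnet of $(\Lambda_1,\Lambda_2,\Lambda_3)$ realizing $Z(M)$ is algebraic, and Proposition~\ref{superlemma} applied to $Z(M)$ places $(\Lambda_1,\Lambda_2,\Lambda_3)$ in case~(I) or~(II) of Theorem~\ref{mainteo}. We are therefore left with $|Z(M)|\le 4$, which, together with $H\le Z(M)$ and $|H|$ dividing $|Z(M)|$, forces $Z(M)=H$. If in addition $M$ is abelian, then $M=H$, whence $|G|\le|\mathrm{Aut}(H)|\cdot|H|\le 24$. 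One then enumerates the finite groups of order at most $24$ admitting a self-centralizing normal subgroup of order $3$ or $4$: they are $S_3$, $D_4$, $Q_8$, $\mathrm{Alt}_4$ and $\mathrm{Sym}_4$. The dihedral groups $S_3$ and $D_4$ give tetrahedron type by Proposition~\ref{tetradiedral}, and the remaining three coincide with the exceptions in the statement.

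\emph{Main obstacle.} The hardest step is the residual subcase in which $M$ is non-abelian with $Z(M)=H$. Here $M\setminus H$ contains elements $g$ for which $\langle H,g\rangle$ is abelian of order at least $2|H|\ge 6$, but such subgroups need not be normal in $G$. The plan is to form the normal closure of $\langle H,g\rangle$ in $G$ and to use the projective constraints on involutory homologies preserving $(\Lambda_1,\Lambda_2,\Lambda_3)$ provided by Propositions~\ref{charinvbis} and~\ref{28febbrbis}, either to produce a normal abelian subgroup of $G$ of order $\ge 5$ (returning to the previous reduction) or to force $G$ into the small list already classified. Showing that no non-abelian $M$ with centre of order $3$ or $4$ can escape both alternatives is the delicate core of the proof.
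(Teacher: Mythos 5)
Your reduction works where you carry it out, but you have left a genuine hole at exactly the point you flag as the ``delicate core'': the case where $M=\cC_G(H)$ is non-abelian with $Z(M)=H$. Your sketch for that case (pass to the normal closure of $\langle H,g\rangle$ and invoke Propositions~\ref{charinvbis} and~\ref{28febbrbis}) is not developed and is not obviously workable, because the normal closure of an abelian subgroup need not be abelian, so you cannot feed it back into Proposition~\ref{superlemma}. The underlying problem is that you are insisting on producing a \emph{normal} abelian subgroup of order $\ge 5$ so as to use Proposition~\ref{superlemma}, when the tool that actually closes this case is Proposition~\ref{superlemmaszeged}, which only requires a \emph{proper abelian subgroup} of order $\ge 5$ whose associated subnets are algebraic --- no normality needed.

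The paper's proof splits on $M>H$ versus $M=H$ rather than on $Z(M)$. If $M>H$, pick any $m\in M\setminus H$; then $T=\langle H,m\rangle$ is abelian of order $\ge 6$, all $T$-subnets are algebraic (Theorem~\ref{teo1}, or Propositions~\ref{clac8szeged} and~\ref{clac9szeged}), and Proposition~\ref{superlemmaszeged} applies to $T$. If some $H$-member $\Gamma_2$ is not contained in a line, one applies the proposition to the $T$-member containing $\Gamma_2$; the exceptional outcomes (iii) and (iv) force the centers of the involutory homologies --- which are points of the $H$-member $\Gamma_2$, of size $3$ --- to be collinear, contradicting the choice of $\Gamma_2$, so the net is algebraic. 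If instead every $H$-member lies on a line, Proposition~\ref{cortetra0} gives triangular or tetrahedron type. This disposes of all of $M>H$ at once, including your stuck subcase. Only when $M=H$ does one enumerate, via $G/H\hookrightarrow\mathrm{Aut}(H)$, arriving at the dihedral groups (tetrahedron type by Proposition~\ref{tetradiedral}) and the exceptions $Q_8$, $\rm{Alt}_4$, $\rm{Sym}_4$; your enumeration of that final list is essentially the paper's. To repair your proof, replace the $Z(M)$ analysis by the direct application of Proposition~\ref{superlemmaszeged} to a non-normal abelian $T$ with $H<T\le M$.
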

\begin{proof} First we investigate the case where $M>H$. Take an element $m\in M$ outside $H$. Then the subgroup $T$ of $G$ generated by $m$ and $H$ is abelian, and larger than $H$. Since $|H|\geq 3$, then $|T|\geq 6$. If all $H$-members of $(\Lambda_1,\Lambda_2,\Lambda_3)$ are contained in a line then $(\Lambda_1,\Lambda_2,\Lambda_3)$ is either triangular or of tetrahedron type by Proposition \ref{cortetra0}. Assume that $\Gamma_2$ is an $H$-member which is not contained in a line. Let $\Gamma_2'$ be the $T$-member containing $\Gamma_2$. We claim that $(\Lambda_1,\Lambda_2,\Lambda_3)$ is algebraic. If not then one of the exceptional cases (iii) or (iv) of Proposition \ref{superlemmaszeged} must hold. Clearly, in these cases $|H|=3$. However, the centers of the involutory homologies mentioned in Proposition \ref{superlemmaszeged} correspond to the points in the $H$-member $\Gamma_2$. As these centers must be collinear, we obtain that $\Gamma_2$ is contained in a line, a contradiction.

Assume that $M=H$. Then $G/H$ is an automorphism group $H$. If $H$ is $C_3$ or $C_4$ then $|\mathrm{Aut}(H)|=2$ and $G$ is either a dihedral group or the quaternion group of order $8$. If $H\cong C_2\times C_2$, then $G$ is a subgroup of  $\rm{Sym}_4$.  The possibilities for $G$
other than $H$ and the dihedral group of order $8$ are two, either $\rm{Alt}_4$, or $\rm{Sym}_4$. Since all these groups are allowed in the proposition, the proof is finished.
\end{proof}
\begin{proposition}
\label{lemcen} Let $G$ be a finite group with a central involution which contains no normal subgroup $H$ of order $4$. Then a dual $3$-net $(\Lambda_1,\Lambda_2,\Lambda_3)$ realizing $G$  is either algebraic or of tetrahedron type.
\end{proposition}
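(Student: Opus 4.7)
My strategy is to reduce Proposition \ref{lemcen} to cases already handled. The two main tools are Proposition \ref{7aprileszeged4} (normal subgroups of order $3$ or $4$) and Proposition \ref{superlemma} combined with Theorem \ref{teo1} (proper abelian normal subgroups of order $\geq 5$). I shall search inside $G$ for a normal subgroup to which one of these applies, and show that the exceptional outcomes do not arise under our hypotheses.

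First I would examine $Z(G)$. Since $z\in Z(G)$ and every subgroup of $Z(G)$ is normal in $G$, the absence of a normal subgroup of order $4$ forces the $2$-part of $|Z(G)|$ to be exactly $2$. If $|Z(G)|\geq 5$ (equivalently, $\geq 6$), then either $G$ is itself abelian and Theorem \ref{teo1} applies directly, or $Z(G)$ is a proper abelian normal subgroup of order $\geq 6$ and the conclusion follows from Theorem \ref{teo1} together with Proposition \ref{superlemma}. Thus I may assume $Z(G)=\langle z\rangle$.

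Next, let $\bar G = G/\langle z\rangle$ and choose a minimal normal subgroup $\bar N$ of $\bar G$, with preimage $N\trianglelefteq G$. If $\bar N$ is elementary abelian of odd order $p^k$, the Sylow $p$-subgroup of $N$ is characteristic in $N$ (hence normal in $G$) and abelian of order $p^k$: Proposition \ref{superlemma} handles $p^k\geq 5$, while $p^k=3$ is dispatched by Proposition \ref{7aprileszeged4}, whose exceptional groups $Q_8,\mathrm{Alt}_4,\mathrm{Sym}_4$ are all excluded by our hypotheses ($Q_8$ has a normal subgroup of order $4$, while $\mathrm{Alt}_4$ and $\mathrm{Sym}_4$ have trivial center). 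If $\bar N$ is elementary abelian of order $2^k$, then $k=1$ gives $|N|=4$ normal, contradicting the hypothesis; for $k\geq 2$, the center $Z(N)$ is characteristic in $N$, hence normal in $G$, abelian, and contains $\langle z\rangle$; the case $|Z(N)|\geq 8$ yields Proposition \ref{superlemma}, while $|Z(N)|=4$ again contradicts the hypothesis.

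The main obstacle I foresee is the residual case $|Z(N)|=2$ forcing $N$ to be extra-special, together with the non-solvable situation where $\bar N$ is a direct product of non-abelian simple groups. For $|N|=8$, the option $N\cong D_4$ is eliminated because its unique cyclic subgroup of order $4$ is characteristic in $N$ and hence normal in $G$, contradicting the hypothesis; this leaves $N\cong Q_8$, which I would handle by invoking Proposition \ref{clac8szeged} (every dual $3$-subnet realizing $Q_8$ is of Urz\'ua type) together with Propositions \ref{charinv} and \ref{28febbrbis}, so that the rigidity of the Urz\'ua configuration and the geometric restrictions on involutory homologies should yield either a new abelian normal subgroup of $G$ of order $\geq 5$ (reducing to the previous steps) or a direct contradiction. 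For extra-special $N$ of larger order and for the non-solvable case, I expect to combine the order hypothesis $p>n$ with the sharp bounds from Proposition \ref{trihomhom+} on projectivity groups preserving a triangular dual $3$-subnet to rule out the few candidate central extensions.
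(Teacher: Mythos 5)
Your reduction framework is sound and, for the solvable--abelian branches, essentially reproduces the paper's argument (the paper passes to a minimal normal subgroup $\bar N$ of $G/\langle z\rangle$ exactly as you do; for odd $d$ it uses Zassenhaus to split $N=W\times\langle z\rangle$ where you use the normal Sylow $p$-subgroup of $N$ -- an equivalent and if anything cleaner step -- and your exclusion of the exceptional outputs $Q_8$, $\mathrm{Alt}_4$, $\mathrm{Sym}_4$ of Proposition \ref{7aprileszeged4} via the hypotheses is correct). However, the two cases you defer with ``I expect'' are precisely where the real content of the proposition lies, and your sketch does not close them. For $N\cong Q_8$ the paper does not argue via rigidity of the Urz\'ua configuration at all: it shows $N$ is a normal Sylow $2$-subgroup with $C_G(N)=N$, deduces $|G|=24$, reduces to $G\cong SL(2,3)$ or the dicyclic group of order $24$, kills the dicyclic group by its normal cyclic subgroup of order $12$, and then excludes $SL(2,3)$ by a genuinely new geometric argument: using Propositions \ref{superlemmaszeged} and \ref{15aprile} to manufacture an involutory homology centered at \emph{every} point of $\Lambda_2$ (via the identity that every non-central element of $SL(2,3)$ is of the form $gh^{-1}g$ with $g^3=h^3=1$), and then applying Proposition \ref{charinv}. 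None of this is implied by the propositions you cite, and Proposition \ref{trihomhom+} (a counting bound for triangular subnets) is not the relevant tool here.

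The non-solvable case is an equally serious omission. The paper handles it by: constraining the Sylow $2$-subgroup of the simple section via Proposition \ref{cl2group} (which, incidentally, is also the one-line way to kill your extra-special groups of order $\geq 32$, since they are neither cyclic, nor a product of two cyclics, nor dihedral, nor $Q_8$); invoking the Gorenstein--Walter classification to get $PSL(2,q^h)$ or $\mathrm{Alt}_7$; analyzing the central extension by $\langle z\rangle$ via Schur's results to land on $SL(2,q^h)$ or the double cover of $\mathrm{Alt}_7$; excluding the latter by its generalized quaternion Sylow $2$-subgroup of order $16$; and excluding $SL(2,q^h)$ because it contains $SL(2,3)$, whose non-realizability was established in the $Q_8$ branch. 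So the two gaps are linked: without an actual proof that $SL(2,3)$ is not realizable, the non-solvable case cannot be finished either. You should replace the two ``I expect'' paragraphs with these arguments (or equivalents); as written, the proposal proves only the abelian-normal-subgroup portion of the statement.
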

\begin{proof} Let $H$ be the normal subgroup  generated by the (unique) central involution of $G$. Two cases are separately investigated according as a minimal normal subgroup $\bar{N}$ of the factor group $\bar{G}=G/H$ is solvable or not. Let $\sigma$ be the natural homomorphism $G\to \bar{G}$. Let $N=\sigma^{-1}(\bar{N})$.

If $\bar{N}$ is solvable, then  $\bar{N}$ is an elementary abelian group of order $d^h$ for a prime $d$. Furthermore, $N$ is a normal subgroup of $G$ and $\bar{N}=N/H$. If $N$ is abelian, then $|N|\geq 6$ and the assertion follows from Proposition \ref{superlemma} and Theorem \ref{teo1}.

Bearing this in mind, the case where $d=2$ is investigated first. Then $N$ has order $2^{h+1}$ and is a normal subgroup of $G$. From Proposition \ref{cl2group}, $N$ is either abelian or it is the quaternion group $Q_8$ of order $8$. We may assume that $N\cong Q_8$. From Proposition \ref{cl2group}, $N$ is not contained in a larger $2$-subgroup of $G$. Therefore $N$ is a (normal) Sylow 2-subgroup of $G$. We may assume that $G$ is larger than $N$. If $M=C_G(N)$ is also larger than $N$, take an element $t\in M$ of outside $N$. Then $t$ has odd order $\geq 3$. The group $T$ generated by $N$ and $t$ has order $8m$ and its subgroup $D$ generated by $t$ together with an element of $N$ of order $4$ is a (normal) cyclic subgroup of $M$  of order $4m$. But this contradicts Proposition  \ref{superlemma}, as $T$ is neither abelian nor dihedral. Therefore $M=N$, and hence $G/N$ is isomorphic to a subgroup $L$ of the automorphism group ${\mathrm{Aut}}(Q_8)$. Hence $|G|/|N|$ divides $24$. On the other hand, since $N$ is a Sylow $2$-subgroup of $G$, $|G/N|$ must be odd. Therefore $|G|=24$. Two possibilities arise according as either $G\cong SL(2,3)$ or $G$ is the dicyclic group of order $24$. The latter case cannot actually occur by Proposition \ref{superlemma} as the dicyclic group of order $24$ has a (normal) cyclic subgroup of order $12$.

To rule the case $G\cong SL(2,3)$ out we relay on Proposition \ref{superlemmaszeged} and \ref{15aprile} since
%%modifica 3 giugno 2011
%$Q_8$
$SL(2,3)$ has four cyclic groups of order $6$. For this purpose, we show that every point in $\Lambda_2$ is the center of an involutory homology preserving $(\Lambda_1,\Lambda_2,\Lambda_3)$ whence the assertion will follow from Proposition \ref{charinv} applied to $\Lambda_2$. With the notation in Section \ref{seccolli}, (iii) Proposition \ref{superlemmaszeged} yields that $|\cU_2|\geq 3$. With the notation introduced in the proof of Proposition \ref{th5.3}, we may assume that the point $1_2$ is the center of an involutory homology $\epsilon$ in $\cU_2$. From (iii) of Proposition \ref{15aprile} every (cyclic) subgroup of $G$ of order $6$ provides (at least) two involutory homologies other than $\epsilon$. Therefore, $|\cU_2|\geq 9$, and every point $u_2\in \Lambda_2$ such that $u^3=1$ is the center of an involutory homology in $\cU$. A straightforward computation shows that every element in $G$ other than the unique involution $e$ can be written as $gh^{-1}g$ with $g^3=h^3=1$. Thus $|\cU|\geq 23$. The involutory homology with center $1_2$ cannot actually been an exception. To show this, take an element $g\in G$ of order $4$. Then $g_2$ is the center of an element in $\cU$. Since $1=g^2=g\cdot 1\cdot g$, this holds true for $1_2$. Therefore $|\cU|=24$. From (i) of Proposition \ref{15aprile}, $\cU$ also preserves $\Lambda_2$. This completes the proof.

Now, the case of odd $d$ is investigated. Since $|H|=2$ and $d$ are coprime, Zassenhaus' theorem \cite[10.1 Hauptsatz]{huppertI1967} ensures a complement $W\cong \bar{N}$ such that $N=W\ltimes H=W\times H$.  Obviously, $W$ is an abelian normal subgroup
of $G$ of order at least $3$. The assertion follows  from Propositions \ref{superlemma} and \ref{7aprileszeged4}.

If $\bar{N}$ is not solvable then it has a non-abelian simple group $\bar{T}$. Let $\bar{S}_2$ be a Sylow $2$-subgroup of $\bar{T}$. By Proposition \ref{cl2group}, the realizable $2$-group $S_2$ is either cyclic, product of two cyclic groups, dihedral or quaternion of order $8$. Thus, $\bar{S}_2$ is either cyclic, product of two cyclic groups or dihedral. As $\bar T$ is simple, $\bar{S}_2$ cannot be cyclic. In the remaining cases we can use the classification of finite simple groups of $2$-rank $2$ to deduce that $\bar T$ is either $\bar{T}\cong PSL(2,q^h)$ with an odd prime $q$ and $q^h\geq 5$, or $\bar{T}\cong\rm{Alt}_7$, cf. the Gorenstein-Walter theorem \cite{gorenstein1965}.

If $H\not\leq T'$ then $T=H\times T'$. As $T'\cong \bar T$, $T'$ contains an elementary abelian subgroup of order $4$, $G$ contains an elementary abelian group of order $8$, a contradiction. Therefore, $T$ is a central extension of either $PSL(2,q^h)$, with $q^h$ as before, or $\rm{Alt}_7$, with a cyclic group of order $2$. From a classical result of Schur \cite[Chapter 33]{aschbacher1993}, either $T\cong SL(2,q)$ or $T$ is the unique central extension of $\rm{Alt}_7$ with a cyclic group of order $2$. In the latter case, no dual $3$-net can actually realize $T$ since Proposition \ref{cl2group} applies, a Sylow $2$-subgroup of $T$ being isomorphic to a generalized quaternion group of order $16$. To finish the proof it suffices to observe that $SL(2,q^h)$, with $q^h$ as before, contains $SL(2,3)$ whereas no dual $3$-net can realize $SL(2,3)$ as we have already pointed out.
\end{proof}

\section{$3$-nets and non-abelian simple groups}
\begin{proposition}
\label{egysz} If a dual $3$-net realizes a non-abelian simple group $G$ then $G\cong \rm{Alt}_5$.
\end{proposition}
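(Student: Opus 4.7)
The plan is to combine the realizability of subgroups (Lemma~\ref{befana}) with a classical classification of simple groups of small $2$-rank, then to eliminate every candidate other than $\mathrm{Alt}_5$ by applying Proposition~\ref{superlemma} to a suitable Borel-type subgroup.

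I would begin by observing that every subgroup of $G$ is realizable by Lemma~\ref{befana}. In particular, applying Proposition~\ref{cl2group} to a Sylow $2$-subgroup $S_2$ of $G$, it must be cyclic, a direct product of two cyclic groups, dihedral, or the quaternion group $Q_8$. Since $G$ is non-abelian simple, Burnside's normal $p$-complement theorem excludes cyclic $S_2$, and the Brauer--Suzuki theorem excludes $S_2 \cong Q_8$ (both conclusions would force a proper nontrivial normal subgroup of $G$, contradicting simplicity). Hence $S_2$ has $2$-rank at most $2$ and is either abelian or dihedral.

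Next, I would invoke the classification of finite simple groups with Sylow $2$-subgroup of $2$-rank at most $2$, in the same spirit as the classification invoked in the proof of Proposition~\ref{lemcen}. Combining this classification with Proposition~\ref{cl2group}, which excludes semidihedral and wreathed Sylow $2$-subgroups, and with Proposition~\ref{noelem}, which excludes Sylow $2$-subgroups of type $C_2^k$ with $k \geq 3$ (thereby eliminating $J_1$ and the small Ree groups), the only remaining candidates are $G \cong PSL(2, q)$ for some prime power $q \geq 4$ and $G \cong \mathrm{Alt}_7$. Moreover, for $G \cong PSL(2, p^h)$, the Sylow $p$-subgroup is elementary abelian of rank $h$, and its realizability combined with Theorem~\ref{teo1} and the algebraic classification in Section~\ref{excubic} forces $h \leq 2$.

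Finally, I would rule out all remaining candidates except $\mathrm{Alt}_5$ using Proposition~\ref{superlemma}. For $G \cong PSL(2, q)$ with $q \geq 7$, a Borel subgroup $B \leq G$ has a normal elementary abelian $p$-subgroup $H$ of order $q$. Since $H$ is abelian, every dual $3$-subnet of $B$ realizing $H$ as a subgroup is algebraic by Theorem~\ref{teo1}, so Proposition~\ref{superlemma} applied to $(B, H)$ forces $B$ to fall under case (I) or (II) of Theorem~\ref{mainteo}, i.e., $B$ is cyclic, a product of two cyclic groups, or dihedral. However, $B$ is the Frobenius-type group $C_p^h \rtimes C_{(q-1)/\gcd(2, q-1)}$, which for $q \geq 7$ is neither abelian nor dihedral---a contradiction. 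This leaves $q \in \{4, 5\}$, both of which give $PSL(2, q) \cong \mathrm{Alt}_5$. Similarly, for $G \cong \mathrm{Alt}_7$, the normalizer of a Sylow $7$-subgroup is the Frobenius group $F_{21} = C_7 \rtimes C_3$, which has a normal cyclic subgroup of order $7 \geq 5$ but is neither abelian nor dihedral, ruling out this case by the same argument.

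The main obstacle I anticipate is the appeal to the classification of finite simple groups of small $2$-rank. This requires substantial external machinery (Gorenstein--Walter and extensions, including Walter's theorem on abelian Sylow $2$-subgroups), but no more than what the paper already invokes in Proposition~\ref{lemcen}; the rest of the argument is a uniform application of Proposition~\ref{superlemma} to the Borel subgroup (or the $7$-normalizer in the $\mathrm{Alt}_7$ case), which reduces the problem to a routine inspection of two-step extensions.
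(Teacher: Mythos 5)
Your proposal is correct and follows essentially the same route as the paper: constrain the Sylow $2$-subgroup via Proposition~\ref{cl2group}, invoke the Gorenstein--Walter type classification to reduce to $PSL(2,q)$ and $\mathrm{Alt}_7$, and then eliminate $q\geq 7$ by applying Theorem~\ref{teo1} and Proposition~\ref{superlemma} to the Borel subgroup $C_p^h\rtimes C_{(q-1)/2}$, which is neither abelian nor dihedral. Your only deviations are refinements of the same argument: you supply the Burnside/Brauer--Suzuki justifications that the paper leaves as ``[Reference???]'', and you dispose of $\mathrm{Alt}_7$ by applying the Frobenius-group argument directly to $N_{\mathrm{Alt}_7}(P_7)\cong C_7\rtimes C_3$ rather than via the subgroup $PSL(2,7)$, which amounts to the same computation.
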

\begin{proof} Let $G$ be a non-abelian simple group, and consider a Sylow $2$-subgroup $S_2$ of $G$. From Proposition \ref{cl2group}, $S_2$ is dihedral since no Sylow $2$-subgroup of a non-abelian simple group is either cyclic or the direct product of cyclic groups, or the quaternion group of order $8$ [Reference???]. From the Gorenstein-Walter theorem \cite{gorenstein1965}, either $G\cong PSL(2,q^h)$ with an odd prime $q$ and $q^h\geq 5$, or $G\cong\rm{Alt}_7$. In the former case, $G$ has a subgroup $T$ of order $q^h(q^h-1)/2$ containing a normal subgroup of order $q^h$. Here $T$ is not abelian and is dihedral only for $q^h=5$. Therefore, Theorem \ref{teo1} and Proposition \ref{superlemma} leave only one case, namely $q=5$.  This also shows that  ${\rm{Alt}}_7$ cannot occur since ${\rm{Alt}}_7$ contains $PSL(2,7)$.
\end{proof}
Notice that by Proposition \ref{clac12}, computer results show that if $p=0$ then $\rm{Alt}_4$ cannot be realized in $PG(2,\mathbb K)$. This implies that that no dual $3$-net can realize $\rm{Alt}_5$.

\section{The proof of Theorem \ref{mainteo}}
Take a minimal normal subgroup $H$ of $G$. If $H$ is not solvable then $H$ is either a simple group or the product of isomorphic simple groups. From Proposition \ref{noelem}, the latter case cannot actually occur as every simple group contains an elementary abelian subgroup of order $4$.
Therefore, if $H$ is not solvable, $H\cong {\rm{Alt}}_5$ may be assumed by Proposition \ref{egysz}. Two cases are considered separately according as the centralizer $C_G(H)$ of $H$ in $G$ is trivial or not. If $|C_G(H)|>1$, take a non-trivial element $u\in C_G(H)$ and define $U$ to be the subgroup of $G$ generated by $u$ together with a dihedral subgroup $D_5$ of $H$ of order $10$. Since $u$ centralizes $D_5$, the latter subgroup is a normal subgroup of $U$. Hence $D_5$ a normal dihedral subgroup of $U$. From Proposition \ref{cortetra}, $M$ itself must be dihedral. Since the center of a dihedral group has order $2$, this implies that $u$ is an involution. Now, the subgroup generated by $u$ together with an elementary abelian subgroup of $H$ of order $4$ generate an elementary abelian subgroup of order $8$. But this contradicts Proposition \ref{noelem}. Therefore, $C_G(H)$ is trivial, equivalently, $G$ is contained in the automorphism group of $H$. From this, either $G=H$ or $G\cong PGL(2,5)$. In the latter case, $G$ contains a subgroup isomorphic to the semidirect product of $C_5$ by $C_4$. But this contradicts Proposition \ref{cortetra}. Therefore, if $H$ is not solvable then $H\cong \rm{Alt}_5$.

If $H$ is solvable then it is an elementary abelian group of order $d\geq 2$.
If $d$ is a power of $2$ then $d=2$ or $d=4$ and Theorem \ref{mainteo} follows from Propositions \ref{cl2group} and \ref{lemcen}. If  $d$ is a power of an odd prime, Theorem \ref{mainteo} is obtained by Propositions \ref{superlemma} and  \ref{7aprileszeged4}. \qed

\vspace{0,5cm}\noindent {\em Authors' addresses}:

\vspace{0.2cm}\noindent G\'abor KORCHM\'AROS\\ Dipartimento di
Matematica e Informatica\\ Universit\`a della Basilicata\\ Contrada Macchia
Romana\\ 85100 Potenza (Italy).\\E--mail: {\tt
gabor.korchmaros@unibas.it }

\vspace{0.2cm}\noindent G\'abor P\'eter NAGY\\ Bolyai Institute \\
University of Szeged \\ Aradi v\'ertan\'uk tere 1\\
H-6720 Szeged (Hungary).\\
E--mail: \texttt{nagyg@math.u-szeged.hu}

\vspace{0.2cm}\noindent Nicola PACE\\ Department of Mathematical Sciences  \\
Florida Atlantic University \\
777 Glades Road \\
Boca Raton, FL 33431, USA}
\end{document}